\definecolor{darkred}{rgb}{0.8,0.1,0.1}
\theoremstyle{plain}
\newtheorem{theo}{Theorem}[section]
\newtheorem{lem}[theo]{Lemma}
\newtheorem{propo}[theo]{Proposition}
\theoremstyle{definition}
\newtheorem{defi}[theo]{Definition}
\newtheorem{assu}[theo]{Assumption}
\newenvironment{ex}
  {\pushQED{\qed}\exx}
  {\popQED\endexx}
\newenvironment{rem}
  {\pushQED{\qed}\remm}
  {\popQED\endremm}
\numberwithin{equation}{section}
\def\nn{\nonumber}
\def\bbR{\mathbb{R}}
\def\bbC{\mathbb{C}}
\def\bbN{\mathbb{N}}
\def\bbZ{\mathbb{Z}}
\def\bbT{\mathbb{T}}
\def\bbS{\mathbb{S}}
\def\sp{\mathrm{sp}}
\def\id{\mathrm{id}}
\def\dd{\mathrm{d}}
\def\Mod{\mathsf{Mod}}
\def\EEE{\mathcal{E}}
\def\sk{\vspace{1mm}}
\let\@fnsymbol\@alph
\title{%
Dirac operators on noncommutative hypersurfaces
}
\author{%
Hans Nguyen$^{a}$\ and\ Alexander Schenkel$^{b}$\vspace{4mm}\\
{\small School of Mathematical Sciences, University of Nottingham,}\\
{\small University Park, Nottingham NG7 2RD, United Kingdom.}\vspace{4mm}\\
{\small Email: ${}^a$~\texttt{hans.nguyen@nottingham.ac.uk}~,~~${}^b$~\texttt{alexander.schenkel@nottingham.ac.uk}}
}
\date{August 2020}
\begin{document}

\maketitle

\vspace{-5mm}

\begin{abstract}
\noindent This paper studies geometric structures on noncommutative hypersurfaces within a module-theoretic approach to noncommutative Riemannian (spin) geometry. A construction to induce differential, Riemannian and spinorial structures from a noncommutative embedding space to a noncommutative hypersurface is developed and applied to obtain noncommutative hypersurface Dirac operators. The general construction is illustrated by studying the sequence $\mathbb{T}^{2}_{\theta} \hookrightarrow \mathbb{S}^{3}_{\theta} \hookrightarrow \mathbb{R}^{4}_{\theta}$ of noncommutative hypersurface embeddings.
\end{abstract}

\vspace{-1mm}

\paragraph*{Keywords:} noncommutative geometry, bimodule connections, Dirac operators, noncommutative hypersurfaces
\vspace{-2mm}

\paragraph*{MSC 2010:} 81T75, 81R50, 46L87
\vspace{-1mm}

\tableofcontents



\section{\label{sec:intro}Introduction and summary}
Dirac operators play a fundamental role in both 
quantum physics and noncommutative geometry. 
From the point of view of Connes' axiomatization of 
noncommutative Riemannian spin manifolds in terms 
of spectral triples \cite{Connes}, a Dirac operator 
is the basic object that is supposed to encode all
geometric information about the noncommutative space. 
However, the way in which a Dirac operator encodes
this geometric data is rather implicit, hence it is 
in general difficult to extract information about the
metric or curvature of a noncommutative space, 
see e.g.\ \cite{CM}. An alternative approach to 
noncommutative Riemannian (spin) geometry 
is to encode the relevant geometric data
layer by layer in terms of noncommutative generalizations
of differential calculi, metrics, connections and spinorial 
structures, see e.g.\ \cite{Landi}, \cite{DV} and \cite{BMbook}.
This module-theoretic approach maintains closer ties to the structures 
familiar from classical differential geometry, which can be very beneficial
for constructing, analyzing and also interpreting examples of 
noncommutative spaces. Moreover, due to results by Beggs and Majid 
\cite{BM}, this approach leads under certain additional hypotheses to 
examples of spectral triples in the sense of \cite{Connes}. 
\sk

The aim of the present paper is to develop techniques that allow us
to induce differential, Riemannian and spinorial structures from a noncommutative
embedding space to a noncommutative hypersurface. Our construction
is a noncommutative generalization of well-known results in classical differential
geometry, see e.g.\ \cite{Bures,Trautman,Bar,HMZ}, and it results in
an explicit formula for the Dirac operator on the noncommutative hypersurface.
In particular, our techniques and results can be applied to construct examples of curved 
noncommutative hypersurfaces and their Dirac operators from very simple flat 
noncommutative embedding spaces. 
\sk

The outline of the remainder of this paper is as follows:
In Section \ref{sec:prelim} we provide a brief review of the relevant
algebraic and geometric preliminaries from the module-theoretic approach
to noncommutative Riemannian (spin) geometry. Section \ref{sec:construction}
presents our main results on induced differential, Riemannian and spinorial structures
on noncommutative hypersurfaces in the sense of Definition \ref{def:hypersurface}. Our construction requires certain additional
hypotheses on the structure of the noncommutative hypersurface under consideration,
which we will introduce consecutively as soon as they are needed. We refer the reader to
Assumptions  \ref{assu:dftransparent}, \ref{assu:Pitransparent} 
and \ref{assu:nabladdfcentral} for a complete list of these hypotheses.
The main result of this section is Proposition \ref{prop:induceddiracop},
where we derive an explicit expression for the Dirac operator on the noncommutative
hypersurface. In Section \ref{sec:examples} we illustrate our constructions and results
by applying them to the sequence of noncommutative hypersurface embeddings 
$\bbT^{2}_{\theta} \hookrightarrow \bbS^{3}_{\theta} \hookrightarrow \bbR^{4}_{\theta}$
studied by Arnlind and Norkvist \cite{AN}. Starting from a very simple flat 
noncommutative geometry on $\bbR^4_\theta$, we compute the induced geometric
structures on both the Connes-Landi sphere $\bbS^3_\theta$ and the noncommutative
torus $\bbT^2_\theta$. Our induced hypersurface Dirac operators on $\bbS^3_\theta$ and 
$\bbT^2_\theta$ are isospectral to the commutative ones and they coincide with the 
Dirac operators obtained from toric deformations in \cite{CL,CDV,Brain}.


\section{\label{sec:prelim}Algebraic and geometric preliminaries}
In this paper all vector spaces, algebras, modules, etc., will be over the
field $\bbC$ of complex numbers. Given an (associative and unital) algebra
$A$, we denote by ${}_A\Mod$ the category of left $A$-modules
and by ${}_A\Mod_A$ the category of $A$-bimodules. Recall that the latter
category is monoidal with respect to the relative tensor product 
$V\otimes_A W\in{}_A\Mod_A$ of $A$-bimodules $V,W\in {}_A\Mod_A$
and monoidal unit given by the $1$-dimensional free $A$-bimodule $A\in{}_A\Mod_A$.
Furthermore, ${}_A\Mod$ is a (left) module category over the monoidal category
$({}_A\Mod_A,\otimes_A,A)$, with left action given by the relative tensor product
$V\otimes_A \EEE\in{}_A\Mod$, for all $V\in{}_A\Mod_A$ and $\EEE\in{}_A\Mod$.
\sk

Let us recall briefly some basic concepts from noncommutative geometry, see 
e.g.\ \cite{Landi}, \cite{DV} and \cite{BMbook} for a detailed introduction to the relevant frameworks.
\begin{defi}\label{def:differential}
A {\em (first-order) differential calculus} on an algebra $A$ is
a pair $(\Omega^1_A,\dd)$ consisting of an $A$-bimodule $\Omega^1_A \in {}_A\Mod_A$
and a linear map $\dd : A\to \Omega^1_A$ (called {\em differential}), such that
\begin{itemize}
\item[(i)] $\dd(a\,a^\prime) = (\dd a) \,a^\prime + a\, (\dd a^\prime)$, for all $a,a^\prime\in A$,
\item[(ii)] $\Omega^1_A = A\,\dd(A) := \big\{\sum_i a_i\, \dd a_i^\prime \,:\, a_i,a_i^\prime \in A\big\}$.
\end{itemize}
We call $\Omega^1_A$ the $A$-bimodule of {\em $1$-forms} on $A$.
\end{defi}

\begin{defi}\label{def:Connections}
Let $(\Omega^1_A,\dd)$  be a differential calculus on an algebra $A$. 
\begin{itemize}
\item[(i)] A {\em connection} on a left $A$-module $\EEE\in{}_A\Mod$ is a linear map 
$\nabla : \EEE\to \Omega^1_A\otimes_A\EEE$ that satisfies the left Leibniz rule
\begin{flalign}\label{eqn:leftLeibniz}
\nabla(a\,s) \, =\, a\,\nabla(s) + \dd a\otimes_A s\quad,
\end{flalign}
for all $a\in A$ and $s\in \EEE$. 

\item[(ii)] A {\em bimodule connection} on an $A$-bimodule
$V\in {}_A\Mod_{A}$ is a pair $(\nabla,\sigma)$ consisting of a
linear map $\nabla : V \to \Omega^1_A\otimes_A V$ and an $A$-bimodule
isomorphism $\sigma : V\otimes_A\Omega^1_A \to \Omega^1_A\otimes_A V$,
such that the following left and right Leibniz rules
\begin{subequations}
\begin{flalign}
\nabla(a\,v) &\,=\, a\,\nabla(v) + \dd a\otimes_A v\quad,\\
\nabla(v\,a)& \,=\, \nabla(v)\,a + \sigma(v\otimes_A \dd a )\quad,
\end{flalign}
\end{subequations}
are satisfied, for all $a\in A$ and $v\in V$.
\end{itemize}
\end{defi}

The concept of bimodule connections is motivated by the following
standard result, see e.g.\ \cite[Section 10]{DV}.
\begin{propo}\label{prop:tensorconnection}
Let $(\Omega^1_A,\dd)$ be a differential calculus on an algebra $A$. 
\begin{itemize}
\item[(i)] Let $\nabla^\EEE$ be a connection on a left $A$-module $\EEE\in{}_A\Mod$
and $(\nabla^V,\sigma^V)$ a bimodule connection on an $A$-bimodule
$V\in{}_A\Mod_A$. Then
\begin{flalign}
\nabla^\otimes(v\otimes_A s) \,:=\, \nabla^V(v)\otimes_A s + (\sigma^V\otimes_A \id)\big(v\otimes_A \nabla^\EEE(s)\big)\quad,
\end{flalign}
for all $v\in V$ and $s\in \EEE$, defines a connection on the tensor product module $V\otimes_A\EEE\in{}_A\Mod$.

\item[(ii)] Let $(\nabla^V,\sigma^V)$ and $(\nabla^W,\sigma^W)$ be bimodule connections on two 
$A$-bimodules $V,W\in{}_A\Mod_A$.
Then
\begin{subequations}
\begin{flalign}
\nabla^\otimes(v\otimes_A w) \,:=\, \nabla^V(v)\otimes_A w + (\sigma^V\otimes_A \id)\big(v\otimes_A \nabla^W(w)\big)\quad,
\end{flalign}
for all $v\in V$ and $w\in W$, and the composite $A$-bimodule isomorphism
\begin{flalign}
\sigma^\otimes \,:\, \xymatrix@C=3.5em{
V\otimes_AW\otimes_A\Omega^1_A \ar[r]^-{\id\otimes_A \sigma^W}~&~V\otimes_A\Omega^1_A\otimes_A W
\ar[r]^-{\sigma^V\otimes_A\id} ~&~\Omega^1_A\otimes_AV\otimes_A W
}
\end{flalign}
\end{subequations}
defines a bimodule connection on the tensor product bimodule
$V\otimes_A W\in{}_A\Mod_A$.
\end{itemize}
\end{propo}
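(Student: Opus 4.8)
The plan is to verify the two Leibniz rules and the bimodule-isomorphism properties by direct computation, using the results already established in Proposition \ref{prop:tensorconnection}(i) and the hypotheses on the individual bimodule connections. Since part (i) already tells us that $\nabla^\otimes$ as defined in the formula for $V\otimes_A W$ is a connection on the \emph{left} $A$-module $V\otimes_A W$ — indeed, one simply forgets the right $W$-module structure and applies part (i) with $\EEE = W$ viewed as a left $A$-module — the left Leibniz rule comes for free. The genuinely new content is therefore the right Leibniz rule for $\nabla^\otimes$ and the fact that $\sigma^\otimes$ is a well-defined $A$-bimodule isomorphism that implements it.

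First I would observe that $\sigma^\otimes$ is a composite of two $A$-bimodule isomorphisms, $\id\otimes_A\sigma^W$ and $\sigma^V\otimes_A\id$, hence is itself an $A$-bimodule isomorphism; here I use that $\sigma^W$ and $\sigma^V$ are $A$-bimodule isomorphisms by hypothesis and that tensoring (on either side) with an identity preserves this. This step is essentially formal. Next I would turn to the right Leibniz rule. Fixing $a\in A$, $v\in V$, $w\in W$, I would expand $\nabla^\otimes(v\otimes_A w\,a)$ by first pushing $a$ through $w$ — applying the right Leibniz rule for $(\nabla^W,\sigma^W)$, which produces $\nabla^W(w)\,a + \sigma^W(w\otimes_A\dd a)$ — and then feeding the result through the two terms of the defining formula for $\nabla^\otimes$. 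The term $\nabla^V(v)\otimes_A w\,a$ is already in the right form, $\big(\nabla^V(v)\otimes_A w\big)a$, since the right $A$-action on $V\otimes_A W$ is on the last factor. For the other term, $(\sigma^V\otimes_A\id)\big(v\otimes_A(\nabla^W(w)\,a + \sigma^W(w\otimes_A\dd a))\big)$, linearity splits it in two: the piece with $\nabla^W(w)\,a$ rearranges, using that $\sigma^V\otimes_A\id$ is right $A$-linear, to $(\sigma^V\otimes_A\id)(v\otimes_A\nabla^W(w))\,a$, and the piece with $\sigma^W(w\otimes_A\dd a)$ becomes precisely $(\sigma^V\otimes_A\id)\big((\id\otimes_A\sigma^W)(v\otimes_A w\otimes_A\dd a)\big) = \sigma^\otimes(v\otimes_A w\otimes_A\dd a)$. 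Collecting the two rearranged pieces into $\nabla^\otimes(v\otimes_A w)\,a$ and keeping the third gives exactly $\nabla^\otimes(v\otimes_A w)\,a + \sigma^\otimes(v\otimes_A w\otimes_A\dd a)$, as required.

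The one point that needs genuine care — and which I regard as the main obstacle, mild though it is — is checking that all the maps involved are well defined on the relative tensor products over $A$, i.e.\ that they respect the balancing relations such as $v\,a\otimes_A w = v\otimes_A a\,w$. For $\nabla^\otimes$ this is the content of part (i) already invoked; for $\sigma^\otimes$ it follows from the $A$-bilinearity of $\sigma^V$ and $\sigma^W$. In writing this up I would also be careful about the placement of parentheses in expressions like $\nabla^V(v)\otimes_A w\,a$, making explicit that $\Omega^1_A\otimes_A V\otimes_A W$ carries its right $A$-module structure on the final tensor factor, so that the identifications used above are unambiguous. Beyond this bookkeeping, the proof is a routine term-by-term verification, and I would present it compactly, perhaps displaying only the expansion of $\nabla^\otimes(v\otimes_A w\,a)$ and indicating which Leibniz rule or $A$-linearity statement justifies each equality.
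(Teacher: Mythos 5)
Your argument for part (ii) is correct and is the standard one (the paper itself gives no proof, quoting the result from the literature): $\sigma^\otimes$ is a composite of $A$-bimodule isomorphisms, hence one; the left Leibniz rule for $\nabla^\otimes$ on $V\otimes_A W$ is inherited from part (i) applied with $\EEE=W$ regarded as a left $A$-module; and your expansion of $\nabla^\otimes(v\otimes_A w\,a)$ via the right Leibniz rule for $(\nabla^W,\sigma^W)$, together with right $A$-linearity of $\sigma^V\otimes_A\id$ and the identification $(\sigma^V\otimes_A\id)\circ(\id\otimes_A\sigma^W)=\sigma^\otimes$, yields exactly $\nabla^\otimes(v\otimes_A w)\,a+\sigma^\otimes(v\otimes_A w\otimes_A\dd a)$.

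The gap is that part (i) is itself part of the statement to be proved, yet you treat it throughout as ``already established'' and never verify it; in particular, the one genuinely non-formal point of the whole proposition---that the defining formula descends to the relative tensor product---is precisely the point you defer to part (i), which makes your appeal to it circular as written. To close this, check that the formula agrees on the two sides of the balancing relation: expanding $\nabla^\otimes(v\,a\otimes_A s)$ with the right Leibniz rule for $(\nabla^V,\sigma^V)$ gives $\nabla^V(v)\,a\otimes_A s+\sigma^V(v\otimes_A\dd a)\otimes_A s+(\sigma^V\otimes_A\id)\big(v\,a\otimes_A\nabla^\EEE(s)\big)$, while expanding $\nabla^\otimes(v\otimes_A a\,s)$ with the left Leibniz rule for $\nabla^\EEE$ and left $A$-linearity of $\sigma^V$ gives $\nabla^V(v)\otimes_A a\,s+(\sigma^V\otimes_A\id)\big(v\,a\otimes_A\nabla^\EEE(s)\big)+\sigma^V(v\otimes_A\dd a)\otimes_A s$, and these coincide in $\Omega^1_A\otimes_A V\otimes_A\EEE$. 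This is exactly where the $\sigma^V$-term of Definition \ref{def:Connections}\,(ii) is indispensable, and it is the reason the proposition motivates the notion of bimodule connection. Adding to this the (routine) left Leibniz rule for $\nabla^\otimes$, which follows from the left Leibniz rule for $\nabla^V$ and left $A$-linearity of $\sigma^V\otimes_A\id$, completes part (i); with (i) in place, your argument for (ii) is complete.
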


\begin{defi}\label{def:metric}
A (generalized) {\em metric} on $\Omega^1_A$ is an $A$-bimodule map
$g :A\to \Omega^1_A \otimes_A \Omega^1_A$ for which there 
exists an $A$-bimodule map $g^{-1} : \Omega^1_A\otimes_A\Omega^1_A \to A$,
such that the two compositions
\begin{flalign}\label{eqn:metricinverse}
\xymatrix@C=3.5em@R=0.5em{
\Omega^1_A\,\cong\, \Omega^1_A \otimes_A A \ar[r]^-{\id\otimes_Ag}~&~ 
\Omega^1_A\otimes_A\Omega^1_A\otimes_A \Omega^1_A \ar[r]^-{g^{-1}\otimes_A\id}~
&~A\otimes_A\Omega^1_A \,\cong\, \Omega^1_A\\
\Omega^1_A\,\cong\, A\otimes_A \Omega^1_A \ar[r]^-{g\otimes_A \id}~&~ 
\Omega^1_A\otimes_A\Omega^1_A\otimes_A \Omega^1_A \ar[r]^-{\id\otimes_A g^{-1}}~
&~\Omega^1_A \otimes_A A\,\cong\, \Omega^1_A
}
\end{flalign}
are the identity morphisms. We call $g^{-1}$ the {\em inverse metric}.
\end{defi}

\begin{rem}\label{rem:metricindices}
Since $A$ is a free module with a basis given by the unit element $1\in A$,
the datum of an $A$-bimodule map $g :A\to \Omega^1_A \otimes_A \Omega^1_A$
is equivalent to that of a central element $g(1)\in \Omega^1_A\otimes_A\Omega^1_A$,
i.e.\ $a\, g(1) = g(1)\,a$ for all $a\in A$. Writing this element as
$g(1) = \sum_{\alpha} g^\alpha\otimes_A g_\alpha $,
the two conditions in \eqref{eqn:metricinverse} read as
\begin{flalign}
\sum_{\alpha} g^{-1} (\omega\otimes_A g^\alpha)\,g_\alpha = \omega = \sum_\alpha g^\alpha\,g^{-1}(g_\alpha\otimes_A \omega)\quad,
\end{flalign}
for all $\omega\in\Omega^1_A$. Using these identities it is easy to prove that,
provided it exists, the inverse metric $g^{-1}$ is unique.
\end{rem}

\begin{defi}\label{def:Riemannian}
Let $(\Omega^1_A,\dd)$ be a differential calculus on an algebra $A$. 
A {\em Riemannian structure} on $(\Omega^1_A,\dd)$ is a pair
$(g,(\nabla,\sigma))$ consisting of a (generalized) metric $g$ on $\Omega^1_A$
and a bimodule connection $(\nabla,\sigma)$ on $\Omega^{1}_{A}$ that satisfies
the following properties:
\begin{itemize}
\item[(i)] {\em Symmetry:}  The diagram
\begin{flalign}\label{eqn:symmetry}
\xymatrix@C=2em{
\ar[dr]_-{g^{-1}}\Omega^{1}_{A} \otimes_A \Omega^{1}_{A} \ar[rr]^-{\sigma} ~&~ ~&~\Omega^{1}_{A}\otimes_A\Omega^{1}_{A} \ar[dl]^-{g^{-1}}\\
~&~ A ~&~
}
\end{flalign}
commutes.

\item[(ii)] {\em Metric compatibility:} The diagram
\begin{flalign}\label{eqn:metriccompatibility}
\xymatrix@C=4em{
\ar[d]_-{g^{-1}}\Omega^{1}_{A}\otimes_A \Omega^{1}_{A} \ar[r]^-{\nabla^\otimes} ~&~\Omega^1_A\otimes_A\Omega^{1}_{A}\otimes_A\Omega^{1}_{A}\ar[d]^-{\id\otimes_A g^{-1}}\\
A \ar[r]_-{\dd}~&~\Omega^1_A\,\cong\,\Omega^1_A\otimes_A A
}
\end{flalign}
commutes, where $\nabla^\otimes$ is the tensor product connection from Proposition \ref{prop:tensorconnection}.
\end{itemize}
\end{defi}

\begin{rem}
Note that our definition of Riemannian structures
does {\em not} include a torsion-free condition for the connection $\nabla$.
In those cases where one has a second-order differential calculus $\Omega^2_A$,
one can supplement Definition \ref{def:Riemannian}
with the torsion-free condition $T =0$, where
$T:= \wedge \circ \nabla-\dd : \Omega^1_A\to \Omega^2_A$
is the torsion tensor, see \cite{BMbook}. The reason why we
do not consider the torsion-free condition is that our 
constructions in this paper apply to connections with torsion too,
hence this condition is not needed.
\end{rem}

Next, we introduce a suitable concept of spinorial structure 
based on the module-theoretic approach by Beggs and Majid \cite{BM,BMbook}.
Let $(\Omega^1_A,\dd)$ be a differential calculus on an algebra $A$
and $(g,(\nabla,\sigma))$ a Riemannian structure on $(\Omega^1_A,\dd)$.
Consider a left $A$-module $\EEE\in{}_A\Mod$, which we interpret as the 
module of sections of a spinor bundle. This module should come endowed
with a connection $\nabla^{\sp} : \EEE\to \Omega^1_A\otimes_A \EEE$, which we
interpret as spin connection, and an $A$-module map 
$\gamma : \Omega^{1}_{A}\otimes_A\EEE\to \EEE$, which we interpret as Clifford multiplication.
These data will be required to be compatible (in the sense defined below) 
with the Riemannian structure $(g,(\nabla,\sigma))$. For later use,
let us introduce the notation
\begin{flalign}\label{eqn:gamma2}
\gamma_{[2]} \,:\, \xymatrix@C=3.5em{
\Omega^{1}_{A}\otimes_A\Omega^{1}_{A}\otimes_A\EEE \ar[r]^-{\id\otimes_A\gamma} ~&~\Omega^{1}_{A}\otimes_A\EEE\ar[r]^-{\gamma}~&~\EEE
}
\end{flalign}
for the $A$-module map obtained by iterated application of $\gamma$. Analogously,
one can define $\gamma_{[n]} : {\Omega^{1}_{A}}^{\otimes_A n}\otimes_A\EEE\to \EEE$, for all
$n\in\bbN$.
\begin{defi}\label{def:spinorial}
Let $(\Omega^1_A,\dd)$ be a differential calculus on an algebra $A$
and $(g,(\nabla,\sigma))$ a Riemannian structure on $(\Omega^1_A,\dd)$.
A {\em spinorial structure} on $(g,(\nabla,\sigma))$ is a triple $(\EEE,\nabla^{\sp},\gamma)$
consisting of a left $A$-module $\EEE\in{}_A\Mod$, a connection $\nabla^{\sp}$ on $\EEE$
and an $A$-module map $\gamma : \Omega^{1}_{A}\otimes_A\EEE\to \EEE$ that satisfies
the following properties:
\begin{itemize}
\item[(i)] {\em Clifford relations:} The diagram
\begin{flalign}\label{eqn:cliffordrels}
\xymatrix@C=4em{
\Omega^{1}_{A}\otimes_A\Omega^{1}_{A}\otimes_A\EEE\ar[d]_-{\gamma_{[2]} + \gamma_{[2]} \circ (\sigma\otimes_A\id)} \ar[r]^-{-2 g^{-1}\otimes_A\id}~&~A\otimes_A\EEE\\
\EEE\ar[ru]_-{\cong}~&~
}
\end{flalign}
commutes.

\item[(ii)] {\em Clifford compatibility:} The diagram
\begin{flalign}\label{eqn:cliffordcomp}
\xymatrix@C=4em{
\ar[d]_-{\gamma}\Omega^{1}_{A}\otimes_A \EEE \ar[r]^-{\nabla^\otimes} ~&~\Omega^1_A\otimes_A\Omega^{1}_{A}\otimes_A\EEE\ar[d]^-{\id\otimes_A\gamma}\\
\EEE\ar[r]_-{\nabla^{\sp}}~&~\Omega^1_{A}\otimes_A\EEE
}
\end{flalign}
commutes, where $\nabla^\otimes$ is the tensor product connection from Proposition \ref{prop:tensorconnection}.
\end{itemize}
We shall call the composite
\begin{flalign}\label{eqn:diracop}
D\,:\, \xymatrix@C=3.5em{
\EEE\ar[r]^-{\nabla^{\sp}}~&~\Omega^1_A\otimes_A\EEE \ar[r]^-{\gamma}~&~\EEE
}
\end{flalign}
the {\em Dirac operator} associated with the given spinorial structure.
\end{defi}

For later use, we record the following property of the Dirac operator.
\begin{propo}\label{prop:Diracproperty}
The Dirac operator \eqref{eqn:diracop} satisfies
\begin{flalign}
D(a\,s) = a\,D(s) + \gamma(\dd a\otimes_A s) \quad,
\end{flalign}
for all $a\in A$ and $s\in\EEE$.
\end{propo}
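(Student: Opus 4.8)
The plan is to unwind the definition of the Dirac operator $D = \gamma \circ \nabla^{\sp}$ and use the two structural properties at hand: the left Leibniz rule for the spin connection $\nabla^{\sp}$ (Definition \ref{def:spinorial}(i) guarantees $\nabla^{\sp}$ is a connection on $\EEE$, hence Definition \ref{def:Connections}(i) applies) and the left $A$-linearity of the Clifford multiplication $\gamma : \Omega^1_A \otimes_A \EEE \to \EEE$.

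First I would compute $D(a\,s) = \gamma\big(\nabla^{\sp}(a\,s)\big)$ for $a \in A$ and $s \in \EEE$. Applying the left Leibniz rule \eqref{eqn:leftLeibniz} for $\nabla^{\sp}$ gives
\begin{flalign*}
\nabla^{\sp}(a\,s) \,=\, a\,\nabla^{\sp}(s) + \dd a \otimes_A s \quad.
\end{flalign*}
Next I would apply $\gamma$ to both terms. Since $\gamma$ is a morphism of left $A$-modules, it commutes with the left $A$-action, so $\gamma\big(a\,\nabla^{\sp}(s)\big) = a\,\gamma\big(\nabla^{\sp}(s)\big) = a\,D(s)$, where the left $A$-action on $\Omega^1_A \otimes_A \EEE$ is the one on the first tensor factor. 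The remaining term is simply $\gamma(\dd a \otimes_A s)$, since $\dd a \in \Omega^1_A$ and $s \in \EEE$. Combining these yields $D(a\,s) = a\,D(s) + \gamma(\dd a \otimes_A s)$, as claimed.

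There is essentially no obstacle here: the statement is an immediate consequence of the Leibniz rule and the left $A$-linearity built into the definition, and the only point requiring any care is to note that the left $A$-module structure on $\Omega^1_A \otimes_A \EEE$ used in the Leibniz rule is precisely the one with respect to which $\gamma$ was assumed linear, so that the $a$ can be pulled through $\gamma$ unambiguously. This property is the module-theoretic counterpart of the classical identity $D(fs) = f\,Ds + \mathrm{grad}(f)\cdot s$ (Clifford multiplication by $\dd f$), and it will be used later to recognize $D$ as a first-order operator and to assemble it with analogous operators on hypersurfaces.
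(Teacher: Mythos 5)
Your proof is correct and follows exactly the argument the paper gives: apply the left Leibniz rule for $\nabla^{\sp}$ and then use the left $A$-linearity of $\gamma$. The paper merely states this in one line, while you have written out the same two steps in detail.
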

\begin{proof}
This is a direct consequence of the Leibniz rule \eqref{eqn:leftLeibniz} for $\nabla^{\sp}$
and the fact that $\gamma$ is left $A$-linear.
\end{proof}

\begin{rem}
We would like to emphasize that our definition of spinorial structures is
less general than the one by Beggs and Majid \cite{BM,BMbook},
which does {\em not} assume Clifford compatibility \eqref{eqn:cliffordcomp}.
We decided to include this additional axiom in our definition, because it is an important
guiding principle for our construction of Dirac operators on noncommutative hypersurfaces
in Section \ref{sec:construction} and it is satisfied in our examples of interest in Section \ref{sec:examples}.
\end{rem}


\section{\label{sec:construction}Induced structures on noncommutative hypersurfaces}
Throughout the whole section, we fix an algebra $A$,
a differential calculus $(\Omega^1_A,\dd)$ on
$A$ (see Definition \ref{def:differential}), a Riemannian
structure $(g,(\nabla,\sigma))$ on $(\Omega^1_A,\dd)$ (see Definition \ref{def:Riemannian})
and a spinorial structure $(\EEE,\nabla^{\sp},\gamma)$ on
$(g,(\nabla,\sigma))$ (see Definition \ref{def:spinorial}).
We interpret $A$ as (the algebra of functions on) a noncommutative embedding space,
which is endowed with a differential, Riemannian and spinorial structure.

\subsection{\label{subsec:hypersurfaces}Noncommutative hypersurfaces}
In this subsection we introduce a suitable class of noncommutative hypersurfaces
that will form the basis for our studies. Given a $2$-sided ideal $I\subset A$, 
consider the quotient algebra
\begin{flalign}\label{eqn:Balgebra}
B\,:=\, A\big/I\quad.
\end{flalign}
Associated with the quotient algebra map $q : A\to B$ is a change of base functor 
$q_! : {}_A\Mod_A \to {}_B\Mod_B$ for bimodules, which is  
given by $q_!(V)= B\otimes_A V\otimes_A B \in {}_B\Mod_B$,
for all $V\in{}_A\Mod_A$. Because $B=A/I$ is a quotient algebra 
and $q : A\to B$ is the corresponding quotient map,
there exists a natural $B$-bimodule isomorphism
\begin{flalign}\label{eqn:q!V}
q_!(V)~ \stackrel{\cong}{\longrightarrow}~ \frac{V}{IV \cup VI}~,~~[a]\otimes_A v\otimes_A [a^\prime] ~\longmapsto~[a\,v\,a^\prime]\quad,
\end{flalign}
where $IV := \{a\,v \,: \, a\in I\text{ and }v\in V\}\subseteq V$ and $VI := \{v\,a \,:\, a\in I \text{ and }v\in V\}\subseteq V$. 
Here and in the following, we use square brackets to denote equivalence classes in quotient spaces.
Applying the change of base functor on the $A$-bimodule of $1$-forms $\Omega^1_A\in {}_A\Mod_A$
is however not sufficient to define a differential calculus on $B$,
because the differential $\dd : A \to q_!(\Omega^1_A)$
does not in general  descend to the quotient $B = A/I$.
Following e.g.\ \cite[Exercise E1.4]{BMbook}, this problem is solved by 
introducing the quotient $B$-bimodule
\begin{flalign}\label{eqn:OmegaB}
\Omega^1_B \,:=\, \frac{q_!(\Omega^1_A)}{B[\dd I] B}\,\in\,{}_B\Mod_B\quad,
\end{flalign}
where $B[\dd I] B : =\{\sum_i b_i\,[\dd a_i]\, b_i^\prime \,:\, b_i,b_i^\prime\in B\text{ and } a_i\in I\}$
is the $B$-subbimodule generated by $[\dd I]\subseteq q_!(\Omega^1_A)$.
The differential $\dd : A\to \Omega^1_A$ then descends to a linear map
\begin{flalign}\label{eqn:differentialB}
\dd_B \,:\, B~\longrightarrow ~\Omega^1_B~,~~[a]~\longmapsto~[\dd a]
\end{flalign}
and we obtain
\begin{propo}\label{prop:diffcalcB}
$(\Omega^1_B,\dd_B)$ is a differential calculus on the quotient algebra $B=A/I$.
\end{propo}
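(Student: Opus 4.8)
The plan is to verify the two axioms of Definition \ref{def:differential} for the pair $(\Omega^1_B,\dd_B)$, where the only real work is to check that $\dd_B$ is well defined as a map out of the quotient $B=A/I$, and that it lands in the quotient $B$-bimodule $\Omega^1_B$ of \eqref{eqn:OmegaB}. For well-definedness: if $[a]=[a']$ in $B$, then $a-a'\in I$, so $\dd a-\dd a' = \dd(a-a')$ and one must argue that $[\dd(a-a')] = 0$ in $\Omega^1_B$. Writing $a-a' = \sum_i c_i\, e_i$ with $c_i\in A$ and $e_i\in I$ (using that $I$ is a $2$-sided ideal; here it suffices to use $c_i\in A$, $e_i\in I$, or symmetrically $c_i\in I$, $e_i\in A$), the Leibniz rule (i) of Definition \ref{def:differential} gives $\dd(a-a') = \sum_i (\dd c_i)\, e_i + \sum_i c_i\,(\dd e_i)$. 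Pushing this to $q_!(\Omega^1_A)$ and using the identification \eqref{eqn:q!V}, the first sum becomes $\sum_i [\dd c_i]\,[e_i] = 0$ because $[e_i]=0$ in $B$ (as $e_i\in I$), while the second sum is $\sum_i [c_i]\,[\dd e_i]\in B[\dd I]B$, which is exactly what we quotient out in \eqref{eqn:OmegaB}. Hence $[\dd(a-a')]=0$ in $\Omega^1_B$, so $\dd_B$ is well defined.

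Next I would check the Leibniz rule (i) for $\dd_B$. This is immediate: for $[a],[a']\in B$ we have $\dd_B([a][a']) = \dd_B([a a']) = [\dd(a a')] = [(\dd a)a' + a(\dd a')] = [\dd a]\,[a'] + [a]\,[\dd a'] = (\dd_B[a])\,[a'] + [a]\,(\dd_B[a'])$, where I use that the $B$-bimodule structure on $\Omega^1_B$ is induced from that on $q_!(\Omega^1_A)$, which in turn is induced from $\Omega^1_A$ via \eqref{eqn:q!V}, so that $[b]\cdot[\omega]\cdot[b'] = [b\,\omega\,b']$ on representatives.

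For axiom (ii), I need $\Omega^1_B = B\,\dd_B(B)$. The inclusion $\supseteq$ is clear. For $\subseteq$: by construction $\Omega^1_B$ is a quotient of $q_!(\Omega^1_A) = B\otimes_A\Omega^1_A\otimes_A B$, and by \eqref{eqn:q!V} every element of $q_!(\Omega^1_A)$ is of the form $[\omega]$ for some $\omega\in\Omega^1_A$. By axiom (ii) for $(\Omega^1_A,\dd)$ we can write $\omega = \sum_i a_i\,\dd a_i'$, hence $[\omega] = \sum_i [a_i]\,[\dd a_i'] = \sum_i [a_i]\,\dd_B([a_i'])\in B\,\dd_B(B)$. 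Passing to the further quotient $\Omega^1_B$ preserves this, so every element of $\Omega^1_B$ lies in $B\,\dd_B(B)$, giving (ii).

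I do not anticipate a genuine obstacle here; the statement is essentially bookkeeping with quotients. The one point that requires a moment's care — and the natural place for a slip — is the well-definedness argument: one must remember to use the Leibniz rule to split $\dd$ of a product $c\,e$ with $e\in I$ into a piece that vanishes because $[e]=0$ in $B$ and a piece that vanishes because $[\dd e]\in B[\dd I]B$ is precisely the submodule we quotiented out in \eqref{eqn:OmegaB}. It is worth noting explicitly that this is the reason $\Omega^1_B$ is defined as that particular quotient rather than simply as $q_!(\Omega^1_A)$.
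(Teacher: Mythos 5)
Your proof is correct and carries out the standard verification that the paper itself simply delegates to a reference (Exercise E1.4 of Beggs--Majid): well-definedness of $\dd_B$, the Leibniz rule, and the surjectivity condition (ii) are all checked correctly. One small simplification: since $a-a'\in I$ already, $[\dd(a-a')]\in[\dd I]\subseteq B[\dd I]B$ is zero in $\Omega^1_B$ directly, so the decomposition $a-a'=\sum_i c_i\,e_i$ and the Leibniz-rule splitting are not actually needed for the well-definedness step (they would be needed if one only knew $\dd$ of a generating set of $I$).
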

\begin{proof}
The necessary properties of Definition \ref{def:differential} are inherited from the differential
calculus $(\Omega^1_A,\dd)$ on $A$, see e.g.\  \cite[Exercise E1.4]{BMbook}.
\end{proof}

The scenario introduced above is too general to interpret $q : A\to B$ as (the dual of)
an embedding of a noncommutative hypersurface $B$ into the noncommutative
embedding space $A$. In particular, it does not capture that $B$ should be 
of  ``codimension $1$'' and that we would like the existence of a  ``normalized normal vector field''
for $B$. In order to introduce an appropriate noncommutative generalization
of these concepts\footnote{We would like to thank Branimir {\'C}a{\'c}i{\'c}
for suggesting Definition \ref{def:hypersurface} to us. This allowed us to generalize
our results for noncommutative level set hypersurfaces (cf.\ Example \ref{ex:levelset}) 
from the first version of this paper.}, we note that the 
canonical quotient map $q_!(\Omega^1_A) \twoheadrightarrow \Omega^1_B$
(cf.\ \eqref{eqn:OmegaB}) gives rise to a short exact sequence of $B$-bimodules
\begin{flalign}\label{eqn:sequence}
\xymatrix{
0 \ar[r] & N^1_B:= B[\dd I]B   \ar[r] & q_!(\Omega^1_A) \ar[r] &  \Omega^1_B \ar[r] & 0
}\quad,
\end{flalign}
where $N^1_B\in{}_B\Mod_B$ is a noncommutative analogue of the conormal bundle.
\begin{defi}\label{def:hypersurface}
We say that $B = A/I$  is a (metrically co-orientable) {\em noncommutative hypersurface}
if  the $B$-bimodule $N^1_B\in {}_B\Mod_B$ admits  a $1$-dimensional 
basis $[\nu]\in N^1_B$ with $\nu \in\Omega^1_A$ a central $1$-form, i.e.\ $a\,\nu =\nu\,a$ for all $a\in A$,
that satisfies the normalization condition
\begin{flalign}\label{eqn:etanormalized}
\big[g^{-1}(\nu\otimes_A\nu)\big] \,=\, 1\,\in\,B\quad.
\end{flalign}
\end{defi}

\begin{rem}
Definition \ref{def:hypersurface} captures both the property
of $B$ being of ``codimension $1$'', which is encoded by the statement that
$N^1_B$ has  rank $1$, and the existence of a ``normalized normal vector field'',
which in our dual language of forms is given by the normalized $\nu\in \Omega^1_A$.
Throughout this paper, we shall always use the simpler terminology
of noncommutative hypersurfaces instead of the technically more appropriate, but cumbersome,
term {\em metrically co-orientable} noncommutative hypersurfaces that emphasises
existence of the normalized $1$-form $\nu$.
\end{rem}

\begin{ex}\label{ex:levelset}
An important and interesting class of examples of noncommutative hypersurfaces
in the sense of Definition \ref{def:hypersurface} is given by {\em noncommutative
level set hypersurfaces}. These are determined by $2$-sided 
ideals $I = (f)\subset A$ generated by a central element 
$f\in\mathcal{Z}(A)\subseteq A$ such that $\nu := \dd f \in \Omega^1_A$ 
is central and satisfies the normalization
condition \eqref{eqn:etanormalized}. It is easy to see that in this case
$[\nu]=[\dd f]$ defines a basis of $N^1_B =B[\dd I]B = B[\dd f]B = [\dd f]B = B[\dd f]$,
where in the last two steps we used that $\dd f $ is central.
All our examples in Section \ref{sec:examples} will be of this type.
\end{ex}

In what follows we let $B=A/I$ be any noncommutative hypersurface
in the sense of Definition \ref{def:hypersurface}.
As a preparation for the following subsections, we 
construct a splitting of the sequence \eqref{eqn:sequence} 
that determines a $B$-bimodule isomorphism between 
$\Omega^1_B\in{}_B\Mod_B$ and a certain $B$-subbimodule
of $q_!(\Omega^1_A)\in {}_B\Mod_B$. 
Using the inverse metric $g^{-1} : \Omega^1_A\otimes_A\Omega^1_A\to A$
and the normalized $1$-form $\nu\in \Omega^1_A$,
we define the $A$-bimodule endomorphism
\begin{subequations}\label{eqn:Pi}
\begin{flalign}
\Pi\,:\,\Omega^1_A ~\longrightarrow~\Omega^1_A ~,~~\omega~\longmapsto~
\omega - g^{-1}(\omega\otimes_A \nu) \, \nu\quad.
\end{flalign}
Note that $\Pi$ is indeed right $A$-linear because $\nu$ is by hypothesis central. 
Using the change of base functor, $\Pi$ defines a $B$-bimodule endomorphism
\begin{flalign}\label{eqn:q!Pi}
\Pi \, :\, q_!(\Omega^1_A)~\longrightarrow~q_!(\Omega^1_A)~,~~[\omega]~\longmapsto
~\Pi\big([\omega]\big) := \big[\Pi(\omega)\big]\quad.
\end{flalign}
\end{subequations}
\begin{propo}\label{prop:Piproperties}
The $B$-bimodule endomorphism $\Pi$ from \eqref{eqn:q!Pi} satisfies the following properties:
\begin{itemize}
\item[(i)] $\Pi\big([\nu]\big) = 0$.
\item[(ii)] $\Pi^2 =\Pi$, i.e.\ $\Pi$ is a projector.
\item[(iii)] The induced $B$-bimodule map $\Pi : \Omega^1_B \to q_!(\Omega^1_A)$ 
on $\Omega^1_B$ (cf.\ \eqref{eqn:OmegaB}) 
is a section of the quotient $B$-bimodule map $q_!(\Omega^1_A)\twoheadrightarrow \Omega^1_B $. 
In particular, it defines an isomorphism $\Omega^1_B \cong \Pi q_!(\Omega^1_A)$.
\end{itemize}
\end{propo}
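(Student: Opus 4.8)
The plan is to verify the three properties in order, exploiting the defining relation $\Pi(\omega) = \omega - g^{-1}(\omega\otimes_A\nu)\,\nu$ together with the normalization \eqref{eqn:etanormalized} and the centrality of $\nu$.

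First, for (i), I would compute $\Pi(\nu) = \nu - g^{-1}(\nu\otimes_A\nu)\,\nu$ in $\Omega^1_A$. This is not zero in $\Omega^1_A$ itself, but after passing to $q_!(\Omega^1_A)$ we have $[g^{-1}(\nu\otimes_A\nu)] = 1\in B$ by \eqref{eqn:etanormalized}, so $\Pi([\nu]) = [\nu] - [g^{-1}(\nu\otimes_A\nu)\,\nu] = [\nu] - [g^{-1}(\nu\otimes_A\nu)]\,[\nu] = [\nu]-[\nu] = 0$, using that the $B$-module structure on $q_!(\Omega^1_A)$ descends from the $A$-module structure on $\Omega^1_A$.

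Next, for (ii), I would apply $\Pi$ twice to an arbitrary $[\omega]\in q_!(\Omega^1_A)$. Writing $f_\omega := g^{-1}(\omega\otimes_A\nu)\in A$, we have $\Pi(\omega) = \omega - f_\omega\,\nu$, and hence $\Pi^2(\omega) = \Pi(\omega) - g^{-1}(\Pi(\omega)\otimes_A\nu)\,\nu$. The term $g^{-1}(\Pi(\omega)\otimes_A\nu)$ expands, by left $A$-linearity of $g^{-1}$ and centrality of $\nu$ (so that $f_\omega\,\nu = \nu\,f_\omega$), to $f_\omega - f_\omega\,g^{-1}(\nu\otimes_A\nu)$. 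The remaining obstruction is that $g^{-1}(\nu\otimes_A\nu)$ need not equal $1$ in $A$ — only in $B$. This is exactly why idempotency is asserted for the $B$-bimodule endomorphism \eqref{eqn:q!Pi} rather than for the $A$-bimodule endomorphism on $\Omega^1_A$: passing to $q_!(\Omega^1_A)$, the element $[g^{-1}(\nu\otimes_A\nu)]=1$, so $[g^{-1}(\Pi(\omega)\otimes_A\nu)\,\nu] = [f_\omega\,\nu] - [f_\omega\,\nu] = 0$ and thus $\Pi^2([\omega]) = \Pi([\omega])$. This handling of the $A$-versus-$B$ distinction is the main conceptual point of the argument; everything else is routine bookkeeping.

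Finally, for (iii), let $p : q_!(\Omega^1_A)\twoheadrightarrow \Omega^1_B$ denote the quotient map from \eqref{eqn:OmegaB}, whose kernel is $N^1_B = B[\dd I]B$. I must first check that $\Pi : q_!(\Omega^1_A)\to q_!(\Omega^1_A)$ descends to a well-defined map $\Omega^1_B\to q_!(\Omega^1_A)$, i.e.\ that $\Pi$ annihilates $N^1_B$. Since $N^1_B$ is the $B$-subbimodule generated by $[\dd a]$ for $a\in I$, and $\Pi$ is a $B$-bimodule map, it suffices to show $\Pi([\dd a]) = 0$ for $a\in I$; but $[\dd a]\in N^1_B$ is a $B$-multiple of the basis element $[\nu]$, so $\Pi([\dd a]) = 0$ follows from (i) and $B$-bilinearity. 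This yields the induced map $\Pi : \Omega^1_B\to q_!(\Omega^1_A)$. To see it is a section of $p$, I would check $p\circ\Pi = \id_{\Omega^1_B}$: for $[\omega]\in q_!(\Omega^1_A)$, $p(\Pi([\omega])) = p([\omega]) - p(g^{-1}(\omega\otimes_A\nu)\,[\nu]) = p([\omega])$ because $[\nu]\in N^1_B = \ker p$, and every element of $\Omega^1_B$ is of the form $p([\omega])$. A section of a surjection of modules splits it, so $q_!(\Omega^1_A) = \ker p \oplus \operatorname{im}\Pi = N^1_B\oplus \Pi q_!(\Omega^1_A)$ and $\Pi : \Omega^1_B\xrightarrow{\cong}\Pi q_!(\Omega^1_A)$ with inverse the restriction of $p$; by (ii), $\Pi q_!(\Omega^1_A)$ is precisely the image of the idempotent $\Pi$, confirming the last claim.
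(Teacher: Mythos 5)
Your proposal is correct and follows essentially the same route as the paper: (i) from the normalization condition after passing to the quotient, (ii) from (i) via $B$-(bi)linearity (your direct expansion amounts to the same computation), and (iii) by using that $[\nu]$ is a basis of $N^1_B$ to get well-definedness and that the quotient map kills $[\nu]$ to get the section property, hence injectivity and the isomorphism onto the image. The extra remarks about where $A$- versus $B$-level identities are used and the splitting $q_!(\Omega^1_A)=N^1_B\oplus\Pi q_!(\Omega^1_A)$ are accurate but not needed beyond what the paper states.
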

\begin{proof}
Item (i) follows directly from the normalization condition \eqref{eqn:etanormalized}
and item (ii) is a direct consequence of (i). To prove item (iii), note that the induced map
$\Pi : \Omega^1_B \to q_!(\Omega^1_A)$  is well-defined because of (i) and the fact that
$[\nu]$ is by hypothesis a basis for $N^1_B$. It is a section of the quotient map
because the latter maps $[\nu] $ to $0$. This in particular implies that
the induced map $\Pi : \Omega^1_B \to q_!(\Omega^1_A)$ is injective, hence it defines
an isomorphism onto its image $\Pi  q_!(\Omega^1_A)$.
\end{proof}

\subsection{\label{subsec:riem}Induced Riemannian structure}
The aim of this subsection is to induce a Riemannian structure $(g_B,(\nabla_B,\sigma_B))$
on the differential calculus $(\Omega^1_B,\dd)$ from Section \ref{subsec:hypersurfaces}. 
Using the change of base functor, the metric $g : A\to \Omega^1_A\otimes_A\Omega^1_A$ 
on $\Omega^1_A$ defines a $B$-bimodule map
\begin{subequations}
\begin{flalign}
g \,: \, B ~\longrightarrow~q_!(\Omega^1_A)\otimes_B q_!(\Omega^1_A)~,~~
[a]~\longmapsto~\big[g(a)\big]
\end{flalign}
and the inverse metric $g^{-1} : \Omega^1_A\otimes_A \Omega^1_A\to A$ defines
a $B$-bimodule map
\begin{flalign}\label{eqn:inversemetricBtmp}
g^{-1} \,:\, q_!(\Omega^1_A)\otimes_B q_!(\Omega^1_A) ~\longrightarrow~B~,~~
[\omega]\otimes_B[\zeta] ~\longmapsto~\big[g^{-1}(\omega\otimes_A \zeta)\big]\quad.
\end{flalign}
\end{subequations}
Using also the quotient map
$q_!(\Omega^1_A) \twoheadrightarrow \Omega^1_B$ and its section $\Pi : \Omega^1_B\to q_!(\Omega^1_A) $
from Proposition \ref{prop:Piproperties} (see also \eqref{eqn:Pi}), we define
the composite $B$-bimodule maps
\begin{subequations}\label{eqn:metricandinversemetricB}
\begin{flalign}\label{eqn:metricB}
g_B \,:\ \xymatrix@C=2.5em{
B \ar[r]^-{g}~&~ q_!(\Omega^1_A)\otimes_B q_!(\Omega^1_A) \ar@{>>}[r]~
&~\Omega^1_B\otimes_B \Omega^1_B
}
\end{flalign}
and
\begin{flalign}\label{eqn:inversemetricB}
g_B^{-1} \,:\, \xymatrix@C=2.5em{
\Omega^1_B\otimes_B \Omega^1_B \ar[r]^-{\Pi\otimes_B\Pi}~&~
q_!(\Omega^1_A)\otimes_B q_!(\Omega^1_A) \ar[r]^-{g^{-1}}~
&~B
}\quad.
\end{flalign}
\end{subequations}
For our studies below, we shall need the following 
\begin{assu}\label{assu:dftransparent}
The $A$-bimodule isomorphism $\sigma : 
\Omega^1_A\otimes_A\Omega^1_A\to \Omega^1_A\otimes_A\Omega^1_A$
associated with the bimodule connection $(\nabla,\sigma)$ on $\Omega^1_A$ satisfies
\begin{flalign}\label{eqn:dftransparent}
\sigma(\omega\otimes_A \nu ) = \nu \otimes_A \omega \quad,\quad
\sigma(\nu\otimes_A \omega) =\omega\otimes_A \nu\quad,
\end{flalign}
for all $\omega\in \Omega^1_A$.
\end{assu}
\begin{lem}\label{lem:dftransparent}
Assumption \ref{assu:dftransparent} implies the following properties:
\begin{itemize}
\item[(i)] $g^{-1}(\omega \otimes_A \nu)= g^{-1}(\nu \otimes_A \omega)$, for all $\omega\in\Omega^1_A$.

\item[(ii)] $\big[g^{-1}\big(\Pi(\omega)\otimes_A \nu \big)\big] = 0 $ and 
$\big[g^{-1}\big(\nu \otimes_A\Pi(\omega)\big)\big] =0$ in $B=A/I$, for all $\omega\in\Omega^1_A$. 
This implies that
\begin{flalign}
\big[g^{-1}\big(\Pi(\omega)\otimes_A\Pi(\zeta)\big)\big] = 
\big[g^{-1}\big(\omega\otimes_A\Pi(\zeta)\big)\big] = 
\big[g^{-1}\big(\Pi(\omega)\otimes_A\zeta\big)\big]\quad,
\end{flalign}
for all $\omega,\zeta\in\Omega^1_A$.

\item[(iii)] $\big[(\id\otimes_A g^{-1})\big(\nabla(\nu)\otimes_A \nu\big)\big] = 0$.
 
\item[(iv)] The two $B$-bimodule maps in \eqref{eqn:metricandinversemetricB} define 
a metric $g_B$ and its inverse $g_B^{-1}$ on $\Omega^1_B$.
\end{itemize}
\end{lem}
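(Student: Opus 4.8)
The plan is to prove the four items in order, each building on the previous ones. For item (i), I would start from the normalization condition \eqref{eqn:etanormalized} together with the two Assumption \ref{assu:dftransparent} relations $\sigma(\omega\otimes_A\nu)=\nu\otimes_A\omega$ and $\sigma(\nu\otimes_A\omega)=\omega\otimes_A\nu$. The natural route is through the symmetry axiom \eqref{eqn:symmetry} of the Riemannian structure, which says $g^{-1}\circ\sigma = g^{-1}$ on $\Omega^1_A\otimes_A\Omega^1_A$. Applying this to $\omega\otimes_A\nu$ gives $g^{-1}(\nu\otimes_A\omega) = g^{-1}(\sigma(\omega\otimes_A\nu)) = g^{-1}(\omega\otimes_A\nu)$, which is exactly (i). (One has to be slightly careful that the ``$g^{-1}$'' appearing in \eqref{eqn:symmetry} is evaluated after identifying $A\otimes_A(-)\cong(-)$, but that is just bookkeeping.)

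For item (ii), I would unpack the definition \eqref{eqn:Pi} of the projector: $\Pi(\omega) = \omega - g^{-1}(\omega\otimes_A\nu)\,\nu$. Then $g^{-1}(\Pi(\omega)\otimes_A\nu) = g^{-1}(\omega\otimes_A\nu) - g^{-1}(\omega\otimes_A\nu)\,g^{-1}(\nu\otimes_A\nu)$ using $A$-bilinearity of $g^{-1}$ and centrality of $\nu$ (to move the scalar $g^{-1}(\omega\otimes_A\nu)$ past the tensor factor). Passing to $B=A/I$ and using $[g^{-1}(\nu\otimes_A\nu)] = 1$ from \eqref{eqn:etanormalized}, the two terms cancel, giving the first vanishing statement; the second follows by the same computation combined with item (i). The displayed consequence then follows by writing $\Pi(\zeta) = \zeta - g^{-1}(\zeta\otimes_A\nu)\,\nu$ inside $[g^{-1}(\Pi(\omega)\otimes_A\Pi(\zeta))]$, expanding, and observing that the extra term is a $B$-multiple of $[g^{-1}(\Pi(\omega)\otimes_A\nu)]=0$; symmetrically for the other equality.

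For item (iii), I would use the metric compatibility axiom \eqref{eqn:metriccompatibility}, which after unwinding $\nabla^\otimes$ (Proposition \ref{prop:tensorconnection}) and evaluating on the central element $g(1) = \sum_\alpha g^\alpha\otimes_A g_\alpha$ says that $\dd\bigl(g^{-1}(g^\alpha\otimes_A g_\alpha)\bigr)$ is expressed in terms of $\nabla$. The cleanest approach is to apply $(\id\otimes_A g^{-1})\circ(\nabla\otimes_A\id)$ to $\nu\otimes_A\nu$ and compare with $\dd$ of $g^{-1}(\nu\otimes_A\nu)$: since $\sigma(\nu\otimes_A\omega)=\omega\otimes_A\nu$, the $\sigma$-twist in $\nabla^\otimes(\nu\otimes_A\nu)$ is harmless, and metric compatibility together with Leibniz gives $2\,(\id\otimes_A g^{-1})(\nabla(\nu)\otimes_A\nu) = \dd\bigl(g^{-1}(\nu\otimes_A\nu)\bigr)$ modulo terms involving $\sigma$; passing to $B$ and using that $[g^{-1}(\nu\otimes_A\nu)]=1$ is a scalar, hence $\dd_B$-closed, forces the left side to vanish in $\Omega^1_B$ (more precisely in $q_!(\Omega^1_A)$ after quotienting). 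This is the step I expect to be the main obstacle: the factor of $2$ and the precise placement of $\sigma$ inside $\nabla^\otimes$ require care, and one must check that $\sigma(\Omega^1_A\otimes_A\nu)\subseteq\nu\otimes_A\Omega^1_A$ — guaranteed by Assumption \ref{assu:dftransparent} — so that the ``normal'' component of $\nabla(\nu)$ is what survives and it maps to zero in the quotient defining $\Omega^1_B$.

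Finally, for item (iv), I would verify that $g_B$ and $g_B^{-1}$ as defined in \eqref{eqn:metricandinversemetricB} satisfy the two conditions of Definition \ref{def:metric}, i.e.\ the two compositions in \eqref{eqn:metricinverse} are identities on $\Omega^1_B$. Here the key input is that, under the identification $\Omega^1_B\cong\Pi q_!(\Omega^1_A)$ from Proposition \ref{prop:Piproperties}(iii), the map $g_B^{-1}$ agrees with $g^{-1}$ restricted to the image of $\Pi\otimes_B\Pi$, and item (ii) lets one freely insert or remove $\Pi$'s inside $g^{-1}$ modulo $I$. So the original relations $\sum_\alpha g^{-1}(\omega\otimes_A g^\alpha)\,g_\alpha = \omega$ for $\Omega^1_A$ descend: one computes $g_B^{-1}(\,[\omega]\otimes_B g_B([1]))$ by pushing the $\Pi$'s through with (ii), lands on $[\sum_\alpha g^{-1}(\Pi(\omega)\otimes_A g^\alpha)\,g_\alpha] = [\Pi(\omega)]$, which is $[\omega]$ in $\Omega^1_B$ since the $\nu$-component is killed; the other composition is symmetric. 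The uniqueness remark (Remark \ref{rem:metricindices}) then guarantees $g_B^{-1}$ is the genuine inverse metric.
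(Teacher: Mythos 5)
Your plan is correct and follows the paper's proof essentially step by step: (i) from the symmetry axiom together with Assumption \ref{assu:dftransparent}, (ii) by expanding $\Pi$ and using the normalization \eqref{eqn:etanormalized} and (i), (iii) by applying metric compatibility to $\nu\otimes_A\nu$ so that the $\sigma$-twisted term reproduces $(\id\otimes_A g^{-1})(\nabla(\nu)\otimes_A\nu)$ and the differential of the normalization dies in the quotient, and (iv) by inserting/removing $\Pi$'s via (ii) and using that $\Pi$ is a section of the quotient map. The only points to tighten are in (iii): identifying the $\sigma$-twisted term with the first term (hence the factor $2$) uses item (i), and the resulting vanishing holds in $\Omega^1_B$, where $[\dd I]=0$, not in $q_!(\Omega^1_A)$, so your parenthetical aside should be dropped.
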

\begin{proof}
Item (i) is a direct consequence of the symmetry property of $g^{-1}$ (cf.\ Definition \ref{def:Riemannian})
and Assumption \ref{assu:dftransparent}. The first equality of item (ii) follows from a short calculation
\begin{flalign}
\nn \big[g^{-1}\big(\Pi(\omega)\otimes_A \nu\big) \big]&= 
\big[g^{-1}\big(\big(\omega - g^{-1}(\omega\otimes_A\nu) \,\nu\big)\otimes_A \nu \big)  \big]\\
&=\big[g^{-1}(\omega\otimes_A\nu) - g^{-1}(\omega\otimes_A\nu)  ~ g^{-1}(\nu\otimes_A\nu)\big] =0\quad,
\end{flalign}
where we used the normalization condition \eqref{eqn:etanormalized} for $\nu$.
The second equality in item (ii) follows from this and (i). 
\sk

Item (iii) follows from the calculation
\begin{flalign}
\nn \big[(\id\otimes_A g^{-1})\big(\nabla(\nu)\otimes_A \nu\big)\big] 
&=\big[ \dd \big(g^{-1}(\nu\otimes_A\nu) \big)- (\id\otimes_A g^{-1})(\sigma\otimes_A \id) \big(\nu \otimes_A \nabla(\nu)\big) \big]\\
&= -\big[(\id\otimes_A g^{-1})\big(\nabla(\nu) \otimes_A\nu\big) \big]\quad,
\end{flalign}
where in the first step we used metric compatibility \eqref{eqn:metriccompatibility}
and in the second step we used the normalization condition \eqref{eqn:etanormalized} and item (i).
\sk

To prove item (iv), 
we use the same notations as in Remark \ref{rem:metricindices} to write
$g_B(1) = [g(1)] =  [ \sum_{\alpha} g^\alpha\otimes_A g_\alpha] =  \sum_{\alpha} [g^\alpha]\otimes_B[g_\alpha]$
and $g_B^{-1}([\omega]\otimes_B [\zeta]) = \big[g^{-1}\big(\Pi(\omega)\otimes_A \Pi(\zeta) \big)\big]$.
We then compute
\begin{flalign}
 \sum_{\alpha} g_B^{-1}\big([\omega]\otimes_B [g^\alpha]\big)\, [g_\alpha] 
= \Big[ \sum_{\alpha} g^{-1}\big(\Pi(\omega) \otimes_A g^\alpha\big)\, g_\alpha\Big] = \big[\Pi(\omega)\big] = [\omega]\quad,
\end{flalign}
where in the first step we used (ii).
The second step follows from $g^{-1}$ being the inverse metric of $g$ and the last step uses
that $\Pi$ is a section of the quotient map (cf.\ Proposition \ref{prop:Piproperties}).
The second property $ \sum_{\alpha} [g^\alpha]\, g_B^{-1}\big([g_\alpha]\otimes_B[\omega]\big) = [\omega]$
follows from a similar calculation.
\end{proof}

Let us now focus on the bimodule connection $(\nabla,\sigma)$
on $\Omega^1_A$. From  \eqref{eqn:OmegaB},
we observe that the connection $\nabla : \Omega^1_A\to\Omega^1_A\otimes_A \Omega^1_A$
descends to a connection $\nabla : q_!(\Omega^1_A)\to \Omega^1_B\otimes_B q_!(\Omega^1_A)$
on $q_!(\Omega^1_A)\in{}_B\Mod_B$. Indeed, from the left Leibniz 
rule we conclude that $[\nabla(a\,\omega)] = [a\,\nabla(\omega) + \dd a\otimes_A \omega]=0$,
for all $a\in I$, hence this map is well-defined on the quotient. 
Using the quotient map $q_!(\Omega^1_A) \twoheadrightarrow \Omega^1_B$
and its section $\Pi : \Omega^1_B\to q_!(\Omega^1_A)$ from Proposition \ref{prop:Piproperties}
(see also \eqref{eqn:Pi}), we define the composite linear map
\begin{flalign}\label{eqn:nablaB}
\nabla_B\,:\, \xymatrix@C=2.5em{
\Omega^1_B \ar[r]^-{\Pi}~&~
q_!(\Omega^1_A) \ar[r]^-{\nabla}~&~\Omega^1_B\otimes_B q_!(\Omega^1_A) \ar@{->>}[r]~&~
\Omega^1_B\otimes_B\Omega^1_B
}\quad.
\end{flalign}
The $A$-bimodule isomorphism $\sigma : \Omega^1_A\otimes_A\Omega^1_A\to
\Omega^1_A\otimes_A\Omega^1_A$ associated with the bimodule connection
$(\nabla,\sigma) $ on $\Omega^1_A$ descends, due to Assumption
\ref{assu:dftransparent}, to the $B$-bimodule isomorphism
\begin{flalign}\label{eqn:sigmaB}
\sigma_B\,:\, \Omega^1_B\otimes_B\Omega^1_B~\longrightarrow~\Omega^1_B\otimes_B\Omega^1_B~,~~
[\omega]\otimes_B[\zeta]~\longmapsto~\big[\sigma(\omega\otimes_A\zeta)\big]\quad.
\end{flalign}
\begin{lem}
The pair $(\nabla_B,\sigma_B)$ defined by \eqref{eqn:nablaB} and \eqref{eqn:sigmaB}
is a bimodule connection on $\Omega^1_B$. It reads explicitly as
\begin{flalign}\label{eqn:nablaBexplicit}
\nabla_B\big([\omega]\big) \,=\, \big[\nabla(\omega)  - g^{-1}(\omega\otimes_A\nu)\,\nabla(\nu) \big]\quad,
\end{flalign}
for all $[\omega]\in \Omega^1_B$.
\end{lem}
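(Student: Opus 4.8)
The plan is to verify three things in turn: that $(\nabla_B,\sigma_B)$ satisfies the two Leibniz rules of Definition \ref{def:Connections}(ii), that $\sigma_B$ is a well-defined $B$-bimodule isomorphism (this last point is essentially already granted by Assumption \ref{assu:dftransparent}, but I would spell out why), and that the explicit formula \eqref{eqn:nablaBexplicit} holds. I would start with the explicit formula, since everything else follows more easily once it is in hand. Unwinding the definition \eqref{eqn:nablaB}: apply $\Pi$ to $[\omega]$ to get $[\Pi(\omega)] = [\omega - g^{-1}(\omega\otimes_A\nu)\,\nu]$, then apply the descended connection $\nabla$ on $q_!(\Omega^1_A)$, obtaining $[\nabla(\omega)] - [g^{-1}(\omega\otimes_A\nu)\,\nabla(\nu) + \dd(g^{-1}(\omega\otimes_A\nu))\otimes_A\nu]$ by the left Leibniz rule (here $g^{-1}(\omega\otimes_A\nu)\in A$ is just an element multiplying from the left). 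Finally project along $q_!(\Omega^1_A)\twoheadrightarrow\Omega^1_B$: the term $\dd(\cdots)\otimes_A\nu$ maps to something of the form $(\text{1-form})\otimes_B[\nu]$, and since $[\nu]$ generates $N^1_B$ which is killed in $\Omega^1_B$, that term dies. What survives is exactly $[\nabla(\omega) - g^{-1}(\omega\otimes_A\nu)\,\nabla(\nu)]$, reading the second tensor slot in $\Omega^1_B$. This gives \eqref{eqn:nablaBexplicit}.

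Next I would check the left Leibniz rule for $\nabla_B$. Write $[a]\in B$ and $[\omega]\in\Omega^1_B$ and use \eqref{eqn:nablaBexplicit} on $[a\,\omega]$: expand $\nabla(a\,\omega) = a\,\nabla(\omega) + \dd a\otimes_A\omega$ and $g^{-1}(a\,\omega\otimes_A\nu) = a\,g^{-1}(\omega\otimes_A\nu)$, so that $\nabla_B([a\,\omega]) = [a]\,\nabla_B([\omega]) + [\dd a\otimes_A\omega]$, and the last term is $\dd_B[a]\otimes_B[\omega]$ by the definition \eqref{eqn:differentialB} of $\dd_B$. For the right Leibniz rule I would similarly compute $\nabla_B([\omega\,a])$ using \eqref{eqn:nablaBexplicit}; the new feature is the right-hand term from $\nabla(\omega\,a) = \nabla(\omega)\,a + \sigma(\omega\otimes_A\dd a)$, which descends to $\sigma_B([\omega]\otimes_B\dd_B[a])$, and one must also check that the correction term behaves: $g^{-1}(\omega\,a\otimes_A\nu) = g^{-1}(\omega\otimes_A a\,\nu) = g^{-1}(\omega\otimes_A\nu\,a) = g^{-1}(\omega\otimes_A\nu)\,a$ using centrality of $\nu$, so $g^{-1}(\omega\,a\otimes_A\nu)\,\nabla(\nu) = g^{-1}(\omega\otimes_A\nu)\,a\,\nabla(\nu)$, and here one uses $\nabla(\nu)\,a = \nabla(\nu\,a) - \sigma(\nu\otimes_A\dd a) = \nabla(\nu\,a) - \dd a\otimes_A\nu$ (by Assumption \ref{assu:dftransparent}), where the last term dies in $\Omega^1_B$. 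Collecting everything yields $\nabla_B([\omega\,a]) = \nabla_B([\omega])\,[a] + \sigma_B([\omega]\otimes_B\dd_B[a])$.

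For the well-definedness of $\sigma_B$ in \eqref{eqn:sigmaB}, I would note that $\sigma$ descends to $q_!(\Omega^1_A)\otimes_B q_!(\Omega^1_A)$ as it is an $A$-bimodule map, and then that by the two identities in Assumption \ref{assu:dftransparent} it maps $[\nu]\otimes_B[\zeta]$ and $[\omega]\otimes_B[\nu]$ into the span of elements with a $[\nu]$ in one slot, hence descends further to $\Omega^1_B\otimes_B\Omega^1_B$; its inverse descends for the same reason, so $\sigma_B$ is a $B$-bimodule isomorphism. The only genuinely delicate point — and the one I expect to be the main obstacle — is making the various quotient and projection maps in \eqref{eqn:nablaB} rigorous enough that the "terms with $[\nu]$ in a slot vanish" arguments are airtight; in particular one must be careful that $\nabla$ really does descend to $q_!(\Omega^1_A)\to\Omega^1_B\otimes_B q_!(\Omega^1_A)$ (as argued in the text preceding the lemma via the left Leibniz rule and $[\dd a\otimes_A\omega] = \dd_B[a]\otimes_B[\omega] = 0$ for $a\in I$ — wait, rather $[a\,\omega]=0$ and the Leibniz correction is absorbed), and that the final projection onto the second factor $\Omega^1_B$ is compatible with the balanced tensor product over $B$. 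Once these bookkeeping issues are handled, the computation is routine.
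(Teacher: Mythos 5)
Your proposal is correct and follows essentially the same route as the paper: derive the explicit formula \eqref{eqn:nablaBexplicit} from the left Leibniz rule together with the vanishing of $\nu$ in $\Omega^1_B$, read off the left Leibniz rule from it, and obtain the right Leibniz rule from the computation $\nabla(\nu)\,a=\nabla(\nu\,a)-\sigma(\nu\otimes_A\dd a)=a\,\nabla(\nu)$ using centrality of $\nu$ and Assumption \ref{assu:dftransparent}, which is exactly the paper's centrality argument \eqref{eqn:nabladdfcentral} (you merely embed it in the computation instead of stating it separately). The extra details you give on the descent of $\sigma$ and $\nabla$ are consistent with the discussion preceding the lemma in the paper.
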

\begin{proof}
The explicit expression \eqref{eqn:nablaBexplicit} is obtained by a 
short calculation
\begin{flalign}
\nn \nabla_B\big([\omega]\big) &= \big[\nabla\big(\omega - g^{-1}(\omega\otimes_A\nu)\,\nu\big) \big]
= \big[\nabla(\omega) - g^{-1}(\omega\otimes_A\nu) \,\nabla(\nu) - \dd \big(g^{-1}(\omega\otimes_A\nu)\big)\otimes_A\nu \big]\\
&=\big[\nabla(\omega) - g^{-1}(\omega\otimes_A\nu) \,\nabla(\nu)\big]\quad,
\end{flalign}
where in the second step we used the left Leibniz rule for $\nabla$
and in the third step that $\nu $ is identified with $0$
in $\Omega^1_B$ (cf.\ \eqref{eqn:OmegaB}). The left Leibniz rule is a direct
consequence of this expression and the right Leibniz rule follows
from the fact that $\nabla(\nu)\in \Omega^1_A\otimes_A\Omega^1_A$
is a central element. The latter statement is proven as follows
\begin{flalign}\label{eqn:nabladdfcentral}
a\,\nabla(\nu) = \nabla(a\,\nu) - \dd a\otimes_A\nu
= \nabla(\nu\,a) - \sigma(\nu\otimes_A\dd a) = \nabla(\nu)\,a\quad,
\end{flalign}
where we used the left and right Leibniz rules for $(\nabla,\sigma)$,
centrality of $\nu$ and  Assumption \ref{assu:dftransparent}.
\end{proof}
\begin{rem}
Note that \eqref{eqn:nablaBexplicit} is a noncommutative analog
of the usual Gauss formula for connections on Riemannian submanifolds, 
see e.g.\ \cite[Chapter VII.3]{KN}.
\end{rem}

In order to prove the main result of this subsection, we require an additional
\begin{assu}\label{assu:Pitransparent}
The diagrams
\begin{subequations}
\begin{flalign}
\xymatrix@C=3.5em{
\ar[d]_-{\sigma}q_!(\Omega^1_A)\otimes_B q_!(\Omega^1_A) \ar[r]^-{\Pi\otimes_B\id} ~&~q_!(\Omega^1_A) \otimes_B q_!(\Omega^1_A)\ar[d]^-{\sigma}\\
\ar[r]_-{\id\otimes_B\Pi} q_!(\Omega^1_A)\otimes_B q_!(\Omega^1_A)~&~q_!(\Omega^1_A) \otimes_B q_!(\Omega^1_A)
}
\end{flalign}
and
\begin{flalign}
\xymatrix@C=3.5em{
\ar[d]_-{\sigma}q_!(\Omega^1_A)\otimes_B q_!(\Omega^1_A) \ar[r]^-{\id\otimes_B\Pi} ~&~q_!(\Omega^1_A) \otimes_B q_!(\Omega^1_A)\ar[d]^-{\sigma}\\
\ar[r]_-{\Pi\otimes_B \id} q_!(\Omega^1_A)\otimes_B q_!(\Omega^1_A)~&~q_!(\Omega^1_A) \otimes_B q_!(\Omega^1_A)
}
\end{flalign}
\end{subequations}
commute. 
\end{assu}
\begin{propo}\label{prop:RiemannianStructure}
The pair $(g_B,(\nabla_B,\sigma_B))$ defined in \eqref{eqn:metricandinversemetricB},
\eqref{eqn:nablaB} and \eqref{eqn:sigmaB} is a Riemannian structure
on $(\Omega^1_B,\dd)$.
\end{propo}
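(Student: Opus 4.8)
The plan is to verify the three defining properties of a Riemannian structure (Definition \ref{def:Riemannian}) for the triple $(g_B,(\nabla_B,\sigma_B))$: that $g_B$ is a metric with inverse $g_B^{-1}$, that $\sigma_B$ satisfies the symmetry condition \eqref{eqn:symmetry} with respect to $g_B^{-1}$, and that $(\nabla_B,\sigma_B)$ is metric compatible in the sense of \eqref{eqn:metriccompatibility}. The first property has essentially already been established: $(\nabla_B,\sigma_B)$ is a bimodule connection by the preceding lemma, and $g_B,g_B^{-1}$ form a metric/inverse-metric pair by Lemma \ref{lem:dftransparent}(iv). So the real content is checking symmetry and metric compatibility, and the strategy throughout is to pull everything back to $\Omega^1_A$ via the section $\Pi$, use the corresponding property of the ambient Riemannian structure $(g,(\nabla,\sigma))$, and then push back down, controlling the error terms (which always involve $\nu$) with Lemma \ref{lem:dftransparent} and the centrality of $\nabla(\nu)$.

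For the symmetry condition, I would start from $g_B^{-1}\circ\sigma_B$ applied to $[\omega]\otimes_B[\zeta]$, unwind the definitions \eqref{eqn:inversemetricB} and \eqref{eqn:sigmaB} to get $\big[g^{-1}\big(\Pi(\sigma(\omega\otimes_A\zeta)_{(1)})\otimes_A \Pi(\sigma(\omega\otimes_A\zeta)_{(2)})\big)\big]$ (in Sweedler-like notation for $\sigma$), then use Lemma \ref{lem:dftransparent}(ii) to strip one $\Pi$, then the commuting squares of Assumption \ref{assu:Pitransparent} to move the remaining $\Pi$ across $\sigma$, and finally the symmetry of the ambient $g^{-1}$ (Definition \ref{def:Riemannian}(i)) together with Lemma \ref{lem:dftransparent}(ii) again to land on $g_B^{-1}([\omega]\otimes_B[\zeta])$. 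This is where Assumption \ref{assu:Pitransparent} earns its keep: without it one cannot commute the projector past $\sigma$ to align with the ambient symmetry property.

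The metric-compatibility diagram is the main obstacle and requires the most care. I would compute $(\id\otimes_B g_B^{-1})\big(\nabla_B^{\otimes}([\omega]\otimes_B[\zeta])\big)$ by first expanding $\nabla_B^{\otimes}$ via Proposition \ref{prop:tensorconnection}(ii) in terms of $\nabla_B$ and $\sigma_B$, then substituting the explicit Gauss-type formula \eqref{eqn:nablaBexplicit} for $\nabla_B$. This produces a ``main'' term that is the image of $(\id\otimes_A g^{-1})(\nabla^{\otimes}(\omega\otimes_A\zeta))$ — which equals $[\dd_B g_B^{-1}([\omega]\otimes_B[\zeta])]$ by ambient metric compatibility \eqref{eqn:metriccompatibility} and the fact that $\dd$ descends to $\dd_B$ — plus correction terms proportional to $\nabla(\nu)$, $g^{-1}(\,\cdot\,\otimes_A\nu)$, and $(\id\otimes_A g^{-1})(\nabla(\nu)\otimes_A\nu)$. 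The terms of the last type vanish by Lemma \ref{lem:dftransparent}(iii); the terms of the form $g^{-1}(\Pi(-)\otimes_A\nu)$ vanish in $B$ by Lemma \ref{lem:dftransparent}(ii); and the remaining cross-terms involving $\sigma_B$ acting on something with a $\nu$-leg are killed using Assumption \ref{assu:dftransparent} (which says $\sigma$ swaps $\nu$ with anything) combined once more with Lemma \ref{lem:dftransparent}(ii). Keeping the bookkeeping of these several correction terms straight, and making sure each is eliminated by exactly one of the lemma's items, is the delicate part; everything else is a routine diagram chase. Finally I would note that well-definedness of all these composites on the quotients has already been checked in the constructions of $\nabla_B$, $\sigma_B$, $g_B$ and $g_B^{-1}$, so no separate argument is needed there.
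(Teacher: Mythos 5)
Your reduction to the symmetry and metric-compatibility axioms, and your symmetry argument via Assumption \ref{assu:Pitransparent} and the ambient symmetry of $g^{-1}$, are fine and agree with the paper. The gap is in the metric-compatibility bookkeeping. Your ``main term'' identification is wrong: ambient metric compatibility applied to $\omega\otimes_A\zeta$ gives $[\dd\,g^{-1}(\omega\otimes_A\zeta)]=\dd_B\big[g^{-1}(\omega\otimes_A\zeta)\big]$, and this is \emph{not} $\dd_B\,g_B^{-1}([\omega]\otimes_B[\zeta])$, because $g_B^{-1}([\omega]\otimes_B[\zeta])=\big[g^{-1}(\Pi(\omega)\otimes_A\Pi(\zeta))\big]=\big[g^{-1}(\omega\otimes_A\zeta)\big]-\big[g^{-1}(\omega\otimes_A\nu)\,g^{-1}(\zeta\otimes_A\nu)\big]$, and the second summand generally has a nonvanishing differential. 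Concretely, for $\bbS^3_\theta\hookrightarrow\bbR^4_\theta$ with $\omega=\dd z^i$, $\zeta=\dd z^j$: $g^{-1}(\dd z^i\otimes_A\dd z^j)=g^{ij}$ is constant, so your main term is zero, whereas $\dd_B\,g_B^{-1}(\dd z^i\otimes_B\dd z^j)=-\dd_B(z^i\,z^j)\neq 0$. Correspondingly, your claim that \emph{all} correction terms vanish cannot hold: in that example $\nabla(\dd z^i)=0$, so the entire left-hand side $(\id\otimes_B g_B^{-1})\nabla_B^{\otimes}$ comes from the terms $-\,g^{-1}(\omega\otimes_A\nu)\,\nabla(\nu)$ of the Gauss formula \eqref{eqn:nablaBexplicit} contracted with $\Pi(\zeta)$ through $g^{-1}$, and these fall under none of your three vanishing mechanisms (they are not of the form $(\id\otimes_A g^{-1})(\nabla(\nu)\otimes_A\nu)$, not of the form $g^{-1}(\Pi(-)\otimes_A\nu)$, and not removable by the $\sigma$--$\nu$ swap of Assumption \ref{assu:dftransparent}). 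These second-fundamental-form terms are precisely what supplies the missing piece $-\dd_B\big[g^{-1}(\omega\otimes_A\nu)\,g^{-1}(\zeta\otimes_A\nu)\big]$; discarding them, your computation would output $\dd_B[g^{-1}(\omega\otimes_A\zeta)]$ instead of $\dd_B\,g_B^{-1}([\omega]\otimes_B[\zeta])$.

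The clean repair is the paper's route: apply ambient metric compatibility to the projected representative $\Pi(\omega)\otimes_A\Pi(\zeta)$ rather than to $\omega\otimes_A\zeta$, i.e.\ start from $\dd_B\,g_B^{-1}([\omega]\otimes_B[\zeta])=\big[\dd\,g^{-1}(\Pi(\omega)\otimes_A\Pi(\zeta))\big]=\big[(\id\otimes_A g^{-1})\big(\nabla\Pi(\omega)\otimes_A\Pi(\zeta)+\sigma_{12}(\Pi(\omega)\otimes_A\nabla\Pi(\zeta))\big)\big]$ with $\sigma_{12}:=\sigma\otimes_A\id$, and then only insert or remove projectors using Lemma \ref{lem:dftransparent}(ii) and Assumption \ref{assu:Pitransparent} to recognize $(\id\otimes_B g_B^{-1})\nabla_B^{\otimes}([\omega]\otimes_B[\zeta])$; no $\nabla(\nu)$ bookkeeping and no appeal to Lemma \ref{lem:dftransparent}(iii) is needed. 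Your direction of computation can also be made to work, but only if you retain the $\nabla(\nu)$-proportional terms and verify that they reproduce exactly $-\dd_B\big[g^{-1}(\omega\otimes_A\nu)\,g^{-1}(\zeta\otimes_A\nu)\big]$; as written, the proposal has a genuine gap at this point.
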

\begin{proof}
It remains to prove the symmetry and metric compatibility properties from Definition \ref{def:Riemannian}.
The symmetry property \eqref{eqn:symmetry} for $(g_B,(\nabla_B,\sigma_B))$ follows immediately
from Assumption \ref{assu:Pitransparent} and symmetry of $g^{-1}$.
To verify metric compatibility \eqref{eqn:metriccompatibility} for $(g_B,(\nabla_B,\sigma_B))$, 
we compute by using metric compatibility of the original Riemannian structure $(g,(\nabla,\sigma))$
\begin{flalign}
\dd_B \big(g^{-1}_B\big([\omega]\otimes_B[\zeta]\big)\big)
= \Big[(\id\otimes_Ag^{-1})\Big(\nabla\Pi(\omega) \otimes_A \Pi(\zeta) + \sigma_{12}\big(\Pi(\omega)\otimes_A \nabla\Pi(\zeta)\big)\Big)\Big]\quad,
\end{flalign}
where $\sigma_{12} := \sigma\otimes_A\id$. Using Lemma \ref{lem:dftransparent} (ii), we can in
the first term replace $\nabla\Pi(\omega)$ with $(\id\otimes_A\Pi)\nabla\Pi(\omega)$.
Using also Assumption \ref{assu:Pitransparent}, we can replace in the second term
$\sigma_{12}\big(\Pi(\omega)\otimes_A \nabla\Pi(\zeta)\big)$ with $ (\id\otimes_A\Pi\otimes_A\id)
\sigma_{12}\big(\omega\otimes_A \nabla\Pi(\zeta)\big)$ and hence via 
Lemma \ref{lem:dftransparent} (ii) with $ (\id\otimes_A\Pi\otimes_A\Pi)
\sigma_{12}\big(\omega\otimes_A \nabla\Pi(\zeta)\big)$.
The resulting expression proves metric compatibility for $(g_B,(\nabla_B,\sigma_B))$.
\end{proof}

\subsection{\label{subsec:spinor}Induced spinorial structure}
We now induce a spinorial structure $(\EEE_B,\nabla^\sp_B, \gamma_B)$
on the Riemannian structure $(g_B,(\nabla_B,\sigma_B))$
from Section \ref{subsec:riem}. Our definitions and constructions
below are inspired by well-known results on spinorial structures on hypersurfaces 
in classical differential geometry, see e.g.\ \cite{Bures,Trautman,Bar} and also \cite{HMZ} 
for a good review.
As the first step, we use the change of base functor (for left modules) to define
the $B$-module
\begin{flalign}\label{eqn:EEEB}
\EEE_B \,:=\, q_!(\EEE)\,\cong \, \frac{\EEE}{I \EEE} \,\in\,{}_B\Mod\quad.
\end{flalign}
To introduce a suitable Clifford multiplication $\gamma_B : \Omega^1_B \otimes_B \EEE_B\to \EEE_B$, 
we recall the classical case from \cite[Eqn.\ (3.4)]{HMZ} and define the $B$-module map
\begin{flalign}\label{eqn:gammaB}
\gamma_B\,:\, \Omega^1_B\otimes_B\EEE_B~\longrightarrow~\EEE_B~,~~
[\omega]\otimes_B [s] ~\longmapsto~\big[\gamma_{[2]}\big(\Pi(\omega)\otimes_A \nu \otimes_A s\big)\big]\quad,
\end{flalign}
where $\gamma_{[2]}$ was defined in \eqref{eqn:gamma2}.
Note that this map is well-defined since the normalized $1$-form
$\nu \in\Omega^1_A$ is central by Definition \ref{def:hypersurface}.
\sk

The given connection $\nabla^\sp : \EEE\to\Omega^1_A\otimes_A\EEE$
on $\EEE\in{}_A\Mod$ descends to a connection
$\nabla^\sp : \EEE_B\to \Omega^1_B\otimes_B\EEE_B$
on $\EEE_B\in{}_B\Mod$ because $[\nabla^\sp (a\,s) = a\,\nabla^\sp(s) + \dd a\otimes_A s] =0$,
for all $a\in I$, as a consequence of  the relations in \eqref{eqn:OmegaB} and \eqref{eqn:EEEB}.
This is however not yet the correct induced spin connection on $\EEE_B$.
Motivated by the classical spinorial Gauss formula from \cite[Eqn.\ (3.5)]{HMZ},
we define
\begin{flalign}\label{eqn:nablaspB}
\nabla^\sp_B\,:\, \EEE_B~\longrightarrow~\Omega^1_B\otimes_B\EEE_B~,~~
[s]~\longmapsto~\Big[\nabla^\sp(s) + \frac{1}{2} (\id\otimes_A\gamma_{[2]})\big(\nabla (\nu) \otimes_A \nu
\otimes_A s\big)\Big]\quad.
\end{flalign}
This defines a connection on the left $B$-module $\EEE_B\in{}_B\Mod$
since both $\nu\in\Omega^1_A$ and $\nabla(\nu)\in \Omega^1_A\otimes_A \Omega^1_A$
are central. (The latter statement was proven in \eqref{eqn:nabladdfcentral}.)
\sk

In order to prove that these data define a spinorial structure in the sense
of Definition \ref{def:spinorial}, we require an additional
\begin{assu}\label{assu:nabladdfcentral}
The element $\nabla(\nu)\in\Omega^1_A\otimes_A\Omega^1_A$ satisfies
\begin{flalign}
\big[\sigma_{23}\sigma_{12}\big(\Pi(\omega)\otimes_A \nabla(\nu)\big)\big]
\,=\, \big[\nabla(\nu)\otimes_A \Pi(\omega)\big]\,\in\,\Omega^1_B\otimes_Bq_!(\Omega^1_A)\otimes_Bq_!(\Omega^1_A)\quad,
\end{flalign}
for all $\omega\in\Omega^1_A$,  where $\sigma_{12} :=\sigma\otimes_A\id$
and $\sigma_{23} := \id\otimes_A \sigma$.
\end{assu}

\begin{propo}\label{prop:SpinStr}
The triple $(\EEE_B,\nabla^\sp_B,\gamma_B)$ defined in \eqref{eqn:EEEB},
\eqref{eqn:nablaspB} and \eqref{eqn:gammaB} is a spinorial structure 
on the Riemannian structure $(g_B,(\nabla_B,\sigma_B))$.
\end{propo}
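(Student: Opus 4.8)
The plan is to verify the two axioms of Definition~\ref{def:spinorial} for the triple $(\EEE_B,\nabla^\sp_B,\gamma_B)$, namely the Clifford relations~\eqref{eqn:cliffordrels} and Clifford compatibility~\eqref{eqn:cliffordcomp}, by reducing each to the corresponding statement for the original spinorial structure $(\EEE,\nabla^\sp,\gamma)$ on $A$, transported along the quotient map and the section $\Pi$. The one preliminary point is that $\gamma_B$ and $\nabla^\sp_B$ are well-defined left $B$-module maps (resp.\ a connection); this is already recorded in the text, using centrality of $\nu$ and of $\nabla(\nu)$ (the latter established in~\eqref{eqn:nabladdfcentral}), together with the defining relations of $\Omega^1_B$ and $\EEE_B$. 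So the substance is the two diagrams.

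For the Clifford relations, I would insert the explicit formula $\gamma_B([\omega]\otimes_B[s]) = [\gamma_{[2]}(\Pi(\omega)\otimes_A\nu\otimes_A s)]$ into $\gamma_{B,[2]} + \gamma_{B,[2]}\circ(\sigma_B\otimes_B\id)$ evaluated on $[\omega]\otimes_B[\zeta]\otimes_B[s]$, and unwind $\gamma_{B,[2]}$ to a string of four applications of $\gamma$ in $\EEE$ against $\Pi(\omega),\nu,\Pi(\zeta),\nu$ (using~\eqref{eqn:sigmaB} to pull $\sigma_B$ up to $\sigma$ on representatives). The idea is to commute the inner $\nu$ past $\Pi(\zeta)$ using Assumption~\ref{assu:dftransparent} (which gives $\sigma(\Pi(\zeta)\otimes_A\nu) = \nu\otimes_A\Pi(\zeta)$ since $\Pi(\zeta)$ is a $1$-form) at the cost of a Clifford-relation term $-2g^{-1}(\Pi(\zeta)\otimes_A\nu)$; by Lemma~\ref{lem:dftransparent}(ii) that scalar vanishes in $B$, so moving the $\nu$'s together is ``free'' modulo $I$. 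Once the two central $\nu$'s are adjacent, $\gamma_{[2]}(\nu\otimes_A\nu\otimes_A -) = -g^{-1}(\nu\otimes_A\nu)\,\id = -\id$ on $\EEE_B$ by the normalization~\eqref{eqn:etanormalized} and the Clifford relation for $\gamma$; what remains is precisely $-2\,[g^{-1}(\Pi(\omega)\otimes_A\Pi(\zeta))]\,[s]$, which by Lemma~\ref{lem:dftransparent}(ii) equals $-2\,g_B^{-1}([\omega]\otimes_B[\zeta])\,[s]$, i.e.\ the desired diagram commutes.

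For Clifford compatibility, I would expand $(\id\otimes_B\gamma_B)\circ\nabla^\otimes_B$ and $\nabla^\sp_B\circ\gamma_B$ on $[\omega]\otimes_B[s]$ using the explicit Gauss-type formulas~\eqref{eqn:nablaBexplicit} and~\eqref{eqn:nablaspB}, together with the tensor-product connection formula from Proposition~\ref{prop:tensorconnection}. Each side becomes the image under the quotient of an expression built from $\nabla\Pi(\omega)$, $\nabla^\sp(s)$, $\nabla(\nu)$, $\nu$, $\gamma$ and $\sigma$; the ``bulk'' terms match by the Clifford compatibility~\eqref{eqn:cliffordcomp} of the original structure (applied to $\Pi(\omega)\otimes_A s$), and one is left comparing the correction terms involving $\nabla(\nu)$. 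Here is where Assumption~\ref{assu:nabladdfcentral} enters: it is exactly the identity needed to move $\Pi(\omega)$ past $\nabla(\nu)$ (using $\sigma_{23}\sigma_{12}$) so that the $\nabla(\nu)$-correction generated on the $\gamma_B\circ\nabla^\otimes_B$ side, after applying $\gamma$, lines up with the $\tfrac12(\id\otimes_A\gamma_{[2]})(\nabla(\nu)\otimes_A\nu\otimes_A -)$ correction coming from $\nabla^\sp_B$; the factors of $\tfrac12$ and an extra Clifford-relation term (which again dies modulo $I$ by Lemma~\ref{lem:dftransparent}(ii)) should combine to give equality. Some care is also needed with the $-g^{-1}(\omega\otimes_A\nu)\nabla(\nu)$ piece of $\nabla_B$ inside $\nabla^\otimes_B$, but since $\nabla(\nu)$ is central this term contributes a scalar multiple of a bracket that vanishes by Lemma~\ref{lem:dftransparent}(ii)--(iii).

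The main obstacle I anticipate is bookkeeping in the Clifford compatibility step: one must carefully expand $\nabla^\otimes_B$ via Proposition~\ref{prop:tensorconnection} with $\sigma_B$, push everything to representatives in $\Omega^1_A$, and track the four or five correction terms (the two halves of $\nabla^\sp_B$, the Gauss term in $\nabla_B$, and the reordering terms from $\sigma$) so that Assumption~\ref{assu:nabladdfcentral} and Lemma~\ref{lem:dftransparent} can be applied at precisely the right spots. By contrast, the Clifford relations step is essentially a short manipulation once the two $\nu$'s are brought together. In both cases the conceptual content is the same: Assumptions~\ref{assu:dftransparent}, \ref{assu:Pitransparent} and~\ref{assu:nabladdfcentral} are engineered so that $\nu$ and $\nabla(\nu)$ behave ``transparently'' with respect to $\sigma$ modulo $I$, which lets the axioms descend from $A$ to $B$.
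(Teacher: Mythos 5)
Your overall strategy---pushing everything to representatives, anticommuting $\nu$ past projected forms at the cost of $g^{-1}$-terms that die modulo $I$, and using $\gamma_{[2]}(\nu\otimes_A\nu\otimes_A -)=-\id$ together with Assumptions \ref{assu:dftransparent}, \ref{assu:Pitransparent}, \ref{assu:nabladdfcentral}---is the paper's strategy, and your treatment of the Clifford relations is essentially the paper's calculation (the one point you gloss over there is that identifying $(\Pi\otimes_A\Pi)\sigma(\omega\otimes_A\zeta)$ with $\sigma(\Pi(\omega)\otimes_A\Pi(\zeta))$, which is needed before the ambient Clifford relation can be applied to the $\sigma_B$-term, is exactly where Assumption \ref{assu:Pitransparent} enters).

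The Clifford-compatibility half, however, contains a genuine error: you claim that the contribution of the piece $-g^{-1}(\omega\otimes_A\nu)\,\nabla(\nu)$ of $\nabla_B$ to $(\id\otimes_B\gamma_B)\circ\nabla_B^{\otimes}$ is ``a scalar multiple of a bracket that vanishes by Lemma \ref{lem:dftransparent}(ii)--(iii)''. It does not vanish: Lemma \ref{lem:dftransparent}(ii) only kills $g^{-1}(\Pi(\omega)\otimes_A\nu)$, not $g^{-1}(\omega\otimes_A\nu)$ for a representative $\omega$, and $(\id\otimes_B\gamma_B)\big([\nabla(\nu)]\otimes_B[s]\big)$ is nonzero. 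Concretely, for $\bbS^3_\theta\hookrightarrow\bbR^4_\theta$ one has $\nabla(\dd z^i)=0$, so this piece is \emph{all} of $\nabla_B(\dd z^i)=-z^i\sum_{k,l}g_{kl}\,\dd z^k\otimes_B\dd z^l$; it is the Weingarten/second-fundamental-form contribution which must survive (it is what eventually produces the $-\tfrac{3}{2}s$ term in $D_B$), not drop out. In the paper's argument this term stays inside $\nabla\Pi(\omega)$, and the real work is elsewhere: the projection correction $(\id\otimes_A g^{-1})\big(\nabla\Pi(\omega)\otimes_A\nu\big)$, generated by the $\Pi$ hidden inside $\gamma_B$, is rewritten using the \emph{metric compatibility} \eqref{eqn:metriccompatibility} of the ambient structure and Lemma \ref{lem:dftransparent}(ii) as $-(\id\otimes_A g^{-1})\sigma_{12}\big(\Pi(\omega)\otimes_A\nabla(\nu)\big)$, then turned via the Clifford relations into $\tfrac{1}{2}(\id\otimes_A\gamma_{[2]})\big(\sigma_{12}+\sigma_{23}\sigma_{12}\big)\big(\Pi(\omega)\otimes_A\nabla(\nu)\otimes_A s\big)$, while Lemma \ref{lem:dftransparent}(iii) removes the $g^{-1}$-term when the remaining $\nu$ is commuted through; only after these steps does Assumption \ref{assu:nabladdfcentral}, identifying $\sigma_{23}\sigma_{12}(\Pi(\omega)\otimes_A\nabla(\nu))$ with $\nabla(\nu)\otimes_A\Pi(\omega)$, make the two sides agree. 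Your outline omits this use of metric compatibility entirely, and with the erroneous vanishing claim the $\nabla(\nu)$-corrections do not balance, so the compatibility check as proposed does not close.
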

\begin{proof}
It remains to prove the Clifford relations and Clifford compatibility
properties from Definition \ref{def:spinorial}. In these calculations we
frequently use the identities
\begin{subequations}\label{eqn:cliffordetarels}
\begin{flalign}\label{eqn:cliffordetarels1}
\big[ \gamma_{[2]}\big(\Pi(\omega)\otimes_A \nu \otimes_A s\big)\big]
= -[\gamma_{[2]}\big(\nu\otimes_A\Pi(\omega) \otimes_A s\big)]
\end{flalign}
and
\begin{flalign}\label{eqn:cliffordetarels2}
\big[ \gamma_{[2]}\big(\nu\otimes_A \nu \otimes_A s\big)\big] = -[s]\quad,
\end{flalign}
\end{subequations}
which follow from the Clifford relations \eqref{eqn:cliffordrels} for $\gamma$,
Assumption \ref{assu:dftransparent}, Lemma \ref{lem:dftransparent} (ii) 
and the normalization condition \eqref{eqn:etanormalized}.
\sk

The Clifford relations \eqref{eqn:cliffordrels} for $\gamma_B$ follow from
a direct calculation, for which we introduce the convenient short notation
$\sigma(\omega \otimes_A \zeta) = \sum_{\alpha} \zeta^\alpha\otimes_A \omega_\alpha$. We compute
\begin{flalign}
\nn &\gamma_{B[2]}\Big([\omega]\otimes_B[\zeta]\otimes_B[s] + {\sigma_{B}}_{12}\big([\omega]\otimes_B[\zeta]\otimes_B[s]\big)\Big)\\
\nn &\qquad\qquad=\Big[\gamma_{[4]}\Big(\Pi(\omega) \otimes_A \nu\otimes_A \Pi(\zeta) \otimes_A\nu\otimes_A s + 
 \sum_{\alpha} \Pi(\zeta^\alpha)\otimes_A\nu\otimes_A\Pi(\omega_\alpha)\otimes_A\nu\otimes_A s\Big)\Big]\\
\nn &\qquad\qquad=\Big[\gamma_{[2]}\Big(\Pi(\omega) \otimes_A \Pi(\zeta)\otimes_A s +  \sum_{\alpha} \Pi(\zeta^\alpha) 
\otimes_A\Pi(\omega_\alpha)\otimes_A s\Big)\Big] \\
&\qquad\qquad =
-2\,g_B^{-1}\big([\omega]\otimes_B[\zeta]\big)\,[s]\quad,
\end{flalign}
where in the second step we used \eqref{eqn:cliffordetarels}.
The last step follows from Assumption \ref{assu:Pitransparent},
the Clifford relations for $\gamma$ and the definition of $g_B^{-1}$ in \eqref{eqn:inversemetricB}.
\sk

Proving the Clifford compatibility property \eqref{eqn:cliffordcomp} for
$\nabla_B$, $\nabla^\sp_B$ and $\gamma_B$ is a lengthier computation.
Using as above \eqref{eqn:cliffordetarels}, Assumption \ref{assu:Pitransparent} 
and also Clifford compatibility for $\nabla$, $\nabla^\sp$
and $\gamma$, one finds that the desired equality
$\nabla_B^\sp\,\gamma_B\big([\omega]\otimes_B [s]\big)
=(\id\otimes_B\gamma_{B})\big(\nabla_B^\otimes\big([\omega]\otimes_B[s]\big)\big)$
is equivalent to the statement that the two expressions
\begin{subequations}
\begin{flalign}\label{eqn:TMP1}
\Big[(\id\otimes_A\gamma_{[2]})\Big(\sigma_{12}\big(\Pi(\omega)\otimes_A\nabla(\nu) \otimes_A s\big)
+ \frac{1}{2}\nabla(\nu)\otimes_A\Pi(\omega)\otimes_A s\Big)\Big]
\end{flalign}
and
\begin{flalign}\label{eqn:TMP2}
\Big[(\id\otimes_A g^{-1})\big(\nabla\Pi(\omega) \otimes_A \nu\big)\otimes_A s
+\frac{1}{2} (\id\otimes_A\gamma_{[4]})\Big(\sigma_{12}\sigma_{23}\big(\Pi(\omega) \otimes_A\nu\otimes_A \nabla(\nu)\otimes_A\nu\otimes_A s\big) \Big)\Big]
\end{flalign}
\end{subequations}
are equal. (The term with $g^{-1}$ in \eqref{eqn:TMP2}
arises from computing $(\id\otimes_A\Pi)\nabla\Pi(\omega) 
= \nabla\Pi(\omega) - ( \id\otimes_A g^{-1})\big(\nabla\Pi(\omega)\otimes_A\nu\big)\otimes_A\nu$
via \eqref{eqn:Pi}.)
Using metric compatibility \eqref{eqn:metriccompatibility} for $(g,(\nabla,\sigma))$,
Lemma \ref{lem:dftransparent} (ii) and the Clifford relations for $\gamma$, we can rewrite
the first term of \eqref{eqn:TMP2} as
\begin{flalign}
\nn \text{\eqref{eqn:TMP2}}{}^{\mathrm{1st}} &=
\Big[-(\id\otimes_A g^{-1})\sigma_{12}\big(\Pi(\omega) \otimes_A \nabla(\nu)\big)\otimes_A s\Big]\\
&=\Big[\frac{1}{2}(\id\otimes_A\gamma_{[2]})\Big(\sigma_{12}\big(\Pi(\omega)\otimes_A\nabla(\nu)\otimes_A s\big) + \sigma_{23}\sigma_{12}\big(\Pi(\omega)\otimes_A\nabla(\nu)\otimes_A s\big)\Big)\Big]\quad.
\end{flalign}
Concerning the second term of \eqref{eqn:TMP2}, we use the Clifford relations
for $\gamma$ to bring the left factor of $\nu$ to the right and observe that there is
no $g^{-1}$ contribution as a result of Lemma \ref{lem:dftransparent} (iii). Hence, we can rewrite
the second term of \eqref{eqn:TMP2} as
\begin{flalign}
\text{\eqref{eqn:TMP2}}{}^{\mathrm{2nd}} &=
\Big[\frac{1}{2}(\id\otimes_A\gamma_{[2]})\Big(\sigma_{12}\big(\Pi(\omega)\otimes_A\nabla(\nu)\otimes_A s\big) \Big)\Big]\quad.
\end{flalign}
From these simplifications and Assumption \ref{assu:nabladdfcentral},
it follows that the expressions in \eqref{eqn:TMP2} and \eqref{eqn:TMP1} are equal.
This completes our proof of the Clifford compatibility property.
\end{proof}

We conclude this section by presenting an explicit expression for the induced
Dirac operator
\begin{flalign}\label{eqn:induceddiracop}
	D_{B}\,:\, \xymatrix@C=3.5em{
\EEE_{B}\ar[r]^-{\nabla^{\sp}_{B}}~&~\Omega^1_B\otimes_B\EEE_{B} \ar[r]^-{\gamma_{B}}~&~\EEE_{B}
}\quad.
\end{flalign}
\begin{propo}\label{prop:induceddiracop}
The induced Dirac operator \eqref{eqn:induceddiracop} reads explicitly as
\begin{flalign}\label{eqn:induceddiracopexplicit}
D_B\big([s]\big)=\Big[ -\frac{1}{2}\big(\gamma_{[2]} - \gamma_{[2]}\,(\sigma\otimes_A\id)\big) \big(\nu\otimes_A \nabla^\sp(s) \big) 
+\frac{1}{2}\gamma_{[2]}\big((\Pi\otimes_A\id)\nabla(\nu)\otimes_A s\big)\Big]\quad,
\end{flalign}
for all $[s]\in\EEE_B$.
\end{propo}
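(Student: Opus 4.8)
The plan is to unfold the definition $D_B = \gamma_B \circ \nabla^\sp_B$ using the explicit formulas \eqref{eqn:nablaspB} for $\nabla^\sp_B$ and \eqref{eqn:gammaB} for $\gamma_B$, and then simplify the resulting expression using the identities \eqref{eqn:cliffordetarels1}, \eqref{eqn:cliffordetarels2} together with Assumption \ref{assu:dftransparent}, Lemma \ref{lem:dftransparent} and the centrality of $\nu$ and $\nabla(\nu)$ established in \eqref{eqn:nabladdfcentral}. Concretely, writing a general element of $\EEE_B$ as $[s]$, applying $\nabla^\sp_B$ gives
\begin{flalign}
\nabla^\sp_B\big([s]\big) = \Big[\nabla^\sp(s) + \tfrac{1}{2}(\id\otimes_A\gamma_{[2]})\big(\nabla(\nu)\otimes_A\nu\otimes_A s\big)\Big]\quad,
\end{flalign}
which lives in $\Omega^1_B\otimes_B\EEE_B$. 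To apply $\gamma_B$ I first need to express this element in the form $\sum_i[\omega_i]\otimes_B[t_i]$ so that \eqref{eqn:gammaB} can be used; the cleanest way is to pass through the section $\Pi$, i.e.\ write $[\omega]\otimes_B[t] = [\Pi(\omega)]\otimes_B[t]$ and then $\gamma_B([\omega]\otimes_B[t]) = [\gamma_{[2]}(\Pi(\omega)\otimes_A\nu\otimes_A t)]$.

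The key step is then the following: applying $\gamma_B$ to the first summand $[\nabla^\sp(s)]$ produces $[\gamma_{[2]}((\Pi\otimes_A\id)\nabla^\sp(s)\otimes_A\nu)]$, where $\Pi$ acts on the $1$-form leg. Using $(\Pi\otimes_A\id)\nabla^\sp(s) = \nabla^\sp(s) - (g^{-1}\otimes_A\id)(\nabla^\sp(s)\otimes_A\nu)\otimes_A\nu \cdot\nu$ (schematically, via \eqref{eqn:Pi}) and then moving the explicit $\nu$ factor past the Clifford multiplication via the Clifford relations \eqref{eqn:cliffordrels} and Assumption \ref{assu:dftransparent}, one rewrites this as $-\tfrac12(\gamma_{[2]} - \gamma_{[2]}(\sigma\otimes_A\id))(\nu\otimes_A\nabla^\sp(s))$ up to a $g^{-1}$-term. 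That leftover $g^{-1}$-term must be shown to combine with (or vanish against) the contribution coming from the second summand. For the second summand $\tfrac12[(\id\otimes_A\gamma_{[2]})(\nabla(\nu)\otimes_A\nu\otimes_A s)]$, applying $\gamma_B$ gives $\tfrac12[\gamma_{[3]}((\Pi\otimes_A\id\otimes_A\id)\nabla(\nu)\otimes_A\nu\otimes_A\nu\otimes_A s)]$, and here the adjacent pair $\nu\otimes_A\nu$ is collapsed by \eqref{eqn:cliffordetarels2} (Clifford relations plus normalization \eqref{eqn:etanormalized}) to produce $+\tfrac12[\gamma_{[2]}((\Pi\otimes_A\id)\nabla(\nu)\otimes_A s)]$, which is exactly the second term of \eqref{eqn:induceddiracopexplicit}.

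The main obstacle I anticipate is bookkeeping the $g^{-1}$-contributions that arise whenever $\Pi$ is expanded via \eqref{eqn:Pi}: there is one from $(\Pi\otimes_A\id)\nabla^\sp(s)$ in the first summand and potentially one from $(\Pi\otimes_A\id)\nabla(\nu)$ in the second. I expect the former to be handled by the observation that moving $\nu$ through $\gamma$ and using the normalization $[g^{-1}(\nu\otimes_A\nu)]=1$ turns the $g^{-1}(\,\cdot\,\otimes_A\nu)\,\nu$ correction into precisely the term making up the difference $\gamma_{[2]} - \gamma_{[2]}(\sigma\otimes_A\id)$ rather than just $\gamma_{[2]}$; this is why the antisymmetrized combination appears in \eqref{eqn:induceddiracopexplicit}. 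For the second summand, the would-be $g^{-1}$-term should vanish by Lemma \ref{lem:dftransparent}(iii), exactly as in the proof of Proposition \ref{prop:SpinStr} where the same cancellation is invoked. Once these two points are checked, collecting the two surviving terms yields \eqref{eqn:induceddiracopexplicit}, and the proof is complete. Throughout, well-definedness on the quotient $\EEE_B = \EEE/I\EEE$ is automatic since every operation used ($\nabla^\sp$, $\gamma_{[n]}$, multiplication by the central elements $\nu$ and $\nabla(\nu)$, and $\Pi$) has already been shown to descend in Section \ref{subsec:spinor}.
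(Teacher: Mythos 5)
Your proposal is correct and follows essentially the same route as the paper, whose own proof is exactly this sketch: unfold \eqref{eqn:nablaspB} and \eqref{eqn:gammaB}, expand the projector \eqref{eqn:Pi}, and close the bookkeeping with the Clifford relations, the identities \eqref{eqn:cliffordetarels}, the symmetry of $g^{-1}$ (Lemma \ref{lem:dftransparent}(i)) and Lemma \ref{lem:dftransparent}(iii). One minor bookkeeping correction: applying $\gamma_B$ to the second summand inserts $\nu$ \emph{between} the two legs of $\nabla(\nu)$ (giving a $\gamma_{[4]}$-expression, not $\gamma_{[3]}$), so the pair $\nu\otimes_A\nu$ is not adjacent until you commute the inserted $\nu$ past the second leg using the Clifford relations and Assumption \ref{assu:dftransparent} --- it is this commutation, not an expansion of $\Pi$, that produces the $g^{-1}$-term killed by Lemma \ref{lem:dftransparent}(iii), and the final result is unchanged.
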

\begin{proof}
This is a straightforward calculation using \eqref{eqn:nablaspB}, \eqref{eqn:gammaB},
the projector \eqref{eqn:Pi} and the Clifford relations \eqref{eqn:cliffordrels} for $\gamma$, in particular
 \eqref{eqn:cliffordetarels}. Since the relevant steps are similar to those in
the proof of Proposition \ref{prop:SpinStr}, we do not have to write out the details of this calculation.
\end{proof}


\section{\label{sec:examples}Examples}
In this section we will illustrate our framework by applying it to the 
sequence of noncommutative hypersurface embeddings 
$\bbT^{2}_{\theta} \hookrightarrow \bbS^{3}_{\theta} \hookrightarrow \bbR^{4}_{\theta}$
studied by Arnlind and Norkvist \cite{AN}. We describe first 
the relevant differential, Riemannian and spinorial structures on the noncommutative
embedding space $\bbR^{4}_{\theta}$ following our definitions in Section \ref{sec:prelim}. 
We then use our constructions from Section \ref{sec:construction}
to induce these structures to the noncommutative hypersurface
$ \bbS^{3}_{\theta} \hookrightarrow \bbR^{4}_{\theta}$
and in a second step to the noncommutative hypersurface 
$\bbT^{2}_{\theta} \hookrightarrow \bbS^{3}_{\theta}$.
These studies result in explicit expressions for the Dirac operators (in the sense of
Definition \ref{def:spinorial}) on these noncommutative hypersurfaces.

\subsection{\label{subsec:R4theta}Noncommutative embedding space $\bbR^4_\theta$}
We begin by introducing the noncommutative embedding space $\bbR^{4}_{\theta}$.
Instead of working with real coordinates $x^{\mu}$, for $\mu = 1,\dots , 4$,
it will be more convenient to introduce the complex coordinates 
$z^1:= x^1 + i x^2$ and $z^2 := x^3 + i x^4$,
together with their complex conjugates
$z^3 := \overline{z^1}= x^1 - i x^2$ and $z^4 := \overline{z^2}=x^3-i x^4$.
The noncommutative embedding space $\bbR^{4}_{\theta}\cong \bbC^2_\theta$ is then
described by the noncommutative algebra
\begin{flalign}\label{eqn:R4algebra}
A \, :=\,  \frac{\bbC[z^{1},z^{2},z^{3},z^{4}]}{(z^{i}\, z^{j} - R^{ji}\, z^{j}\, z^{i})}
\end{flalign}
that is freely generated by the complex coordinates, modulo the ideal generated by
the commutation relations determined by the entries $R^{ji}$ of the matrix
\begin{flalign}\label{eqn:Rmatrix}
R \,:=\, 
\begin{pmatrix}
	1 & e^{-i\theta} & 1 & e^{i\theta} \\
	e^{i\theta} & 1 & e^{-i\theta} & 1 \\
	1 & e^{i\theta} & 1 & e^{-i\theta} \\
	e^{-i\theta} & 1 & e^{i\theta} & 1
\end{pmatrix}\quad,\quad \theta\in\bbR\quad.
\end{flalign}
For later use, we note that the entries of the matrix $R$
satisfy
\begin{subequations}
\begin{flalign}\label{eqn:Rhermitian}
R^{ij}  \,=\, \overline{R^{ji}}
\end{flalign}
and
\begin{flalign}\label{eqn:RijRji}
R^{ij}\,R^{ji} \,=\, 1\quad,
\end{flalign} 
\end{subequations}
where in the latter equation there is no summation over $i$ and $j$.
\sk

To define a differential calculus on $A$, let us introduce the free {\em left} $A$-module 
\begin{subequations}\label{eqn:Rzdz}
\begin{flalign}
\Omega^{1}_{A} \,:= \, \bigoplus_{i = 1}^{4} A\, \dd z^{i}\quad,
\end{flalign}
which we endow with the right $A$-action determined by
\begin{flalign}
\dd z^{i} \, z^{j} \,:=\, R^{ji}\, z^{j}\, \dd z^{i}\quad.
\end{flalign}
\end{subequations}
(Note that this is analogous to the commutation relations in \eqref{eqn:R4algebra}.)
This defines an $A$-bimodule $\Omega^1_A\in{}_A\Mod_A$,
which we endow with a differential $\dd : A \to \Omega^{1}_{A}$ 
by setting  $\dd : z^{i} \mapsto \dd z^{i}$ for the generators 
and extending to all of $A$ via the Leibniz rule.
\begin{propo}\label{prop:R4diffcalc}
The pair $(\Omega^{1}_{A},\dd)$ is a differential calculus on $A$.
\end{propo}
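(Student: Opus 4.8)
The plan is to verify the two defining properties from Definition~\ref{def:differential}. Property~(i), the Leibniz rule, holds by construction: we defined $\dd$ on the generators $z^i$ and extended it to all of $A$ precisely by imposing the Leibniz rule, so the only genuine issue is whether this extension is \emph{well-defined}, i.e.\ compatible with the commutation relations $z^i z^j = R^{ji} z^j z^i$ in \eqref{eqn:R4algebra} and with the bimodule relations $\dd z^i\, z^j = R^{ji} z^j\, \dd z^i$ in \eqref{eqn:Rzdz}. Concretely, I would check that applying $\dd$ to both sides of $z^i z^j = R^{ji} z^j z^i$ and using the Leibniz rule produces the same element of $\Omega^1_A$; expanding the left side gives $(\dd z^i) z^j + z^i\, \dd z^j$ and the right side gives $R^{ji}\big((\dd z^j) z^i + z^j\, \dd z^i\big)$, and after rewriting $(\dd z^i) z^j = R^{ji} z^j\, \dd z^i$ and $(\dd z^j) z^i = R^{ij} z^i\, \dd z^j$ via the bimodule relations, consistency reduces to the identity $R^{ij} R^{ji} = 1$ from \eqref{eqn:RijRji}. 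This is the one place where the specific form of $R$ enters, so it is the conceptual heart of the argument, although the computation itself is short.

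Next I would confirm that $\Omega^1_A$ is a well-defined $A$-bimodule, i.e.\ that the prescribed right action is associative and compatible with the left action. Since $\Omega^1_A$ is free as a left module on the $\dd z^i$, it suffices to check that the right action of the generators $z^j$ as defined in \eqref{eqn:Rzdz} respects the algebra relations of $A$: moving a product $z^j z^k$ past $\dd z^i$ in two different ways (either as $z^j z^k$ directly, or first commuting $z^j z^k = R^{kj} z^k z^j$) must agree, which again comes down to the cocycle-type relations $R^{kj} R^{ji} R^{ki} = R^{ki} R^{kj} R^{ji}$ — trivially true — together with $R^{ij}R^{ji}=1$. Left/right compatibility $a(\omega b) = (a\omega)b$ is immediate since the left action is the free one. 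I would state these checks briefly rather than belabour them.

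Finally, for property~(ii) I must show $\Omega^1_A = A\,\dd(A)$, i.e.\ that every element $\sum_i a_i\, \dd z^i$ with $a_i\in A$ lies in the span of elements of the form $a\,\dd a'$. This is clear because each $\dd z^i$ is itself of the form $\dd(z^i)$ with $z^i\in A$, so $a_i\,\dd z^i = a_i\,\dd(z^i)\in A\,\dd(A)$, and $\Omega^1_A = \bigoplus_i A\,\dd z^i$ by definition. Hence $\Omega^1_A \subseteq A\,\dd(A) \subseteq \Omega^1_A$, giving equality.

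The main obstacle, such as it is, is purely bookkeeping: one has to be careful that the right $A$-module structure on the free left module $\bigoplus_i A\,\dd z^i$ is genuinely well-defined, since a priori the relations in \eqref{eqn:Rzdz} could be inconsistent with the defining relations of $A$. All consistency conditions funnel into the elementary identities \eqref{eqn:Rhermitian} and \eqref{eqn:RijRji} satisfied by the matrix $R$, so once those are invoked the proof is complete. In the write-up I would simply point to these two identities and note that the Leibniz rule then descends to the quotient algebra, making $(\Omega^1_A,\dd)$ a differential calculus.
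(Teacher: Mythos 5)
Your proposal is correct and follows the same route as the paper, which simply states that the result ``holds by construction''; your write-up just makes explicit the routine checks (well-definedness of $\dd$ against the relations via $R^{ij}R^{ji}=1$, consistency of the right action, and surjectivity $\Omega^1_A = A\,\dd(A)$) that this phrase summarizes. No gaps.
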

\begin{proof}
The statement holds by construction.
\end{proof}

The next step is to introduce a Riemannian structure.
For this we consider the standard (flat) Euclidean metric on $\bbR^4_\theta$,
which in complex coordinates reads as
\begin{subequations}\label{eqn:R4metric}
\begin{flalign}
g \,:=\,  \sum_{i,j = 1}^{4}g_{ij} \, \dd z^{i}\otimes_{A}\dd z^{j}\, \in \,\Omega^1_A\otimes_A\Omega^1_A\quad,
\end{flalign}
where $g_{ij}$ are the entries of the matrix
\begin{flalign}
(g_{ij}) \, :=\, \frac{1}{2}
\begin{pmatrix}
	0 & 0 & 1 & 0\\
	0 & 0 & 0 & 1\\
	1 & 0 & 0 & 0\\
	0 & 1 & 0 & 0
\end{pmatrix}
\quad.
\end{flalign}
\end{subequations}
Using \eqref{eqn:Rzdz}, \eqref{eqn:RijRji} and  \eqref{eqn:Rmatrix}, one easily checks that
$g \in \Omega^{1}_{A} \otimes_{A} \Omega^{1}_{A}$ is a central element,
hence it defines an $A$-bimodule map $g: A\to\Omega^1_A\otimes_A\Omega^1_A$, see Remark \ref{rem:metricindices}.
The inverse metric $g^{-1}:\Omega^{1}_{A} \otimes_{A} \Omega^{1}_{A} \to A$ is defined on 
the basis $\{\dd z^{i} \otimes_{A} \dd z^{j}\, :\, i,j =  1, \dots ,4\}$ of 
$\Omega^{1}_{A} \otimes_{A} \Omega^{1}_{A}$ by
\begin{subequations}\label{eqn:R4invmetric}
\begin{flalign}
	g^{-1}\big(\dd z^{i} \otimes_{A} \dd z^{j} \big) = g^{ij}\quad,
\end{flalign}
where $g^{ij}$ are the entries of the matrix
\begin{flalign}
(g^{ij}) \,=\, 2
\begin{pmatrix}
	0 & 0 & 1 & 0\\
	0 & 0 & 0 & 1\\
	1 & 0 & 0 & 0\\
	0 & 1 & 0 & 0
\end{pmatrix}
\quad.
\end{flalign}
\end{subequations}
Observe that
\begin{flalign}\label{eqn:R4delta}
\sum_{j = 1}^{4}g_{ij}\, g^{jk} = \delta_{i}^{k}\quad. 
\end{flalign}
\begin{lem}\label{lem:R4metricpair}
The element $g \in \Omega^{1}_{A} \otimes_{A} \Omega^{1}_{A}$ in 
\eqref{eqn:R4metric} defines a (generalized) metric with inverse metric 
$g^{-1}:\Omega^{1}_{A} \otimes_{A} \Omega^{1}_{A} \to A$
defined by \eqref{eqn:R4invmetric}.
\end{lem}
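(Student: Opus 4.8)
The plan is to verify Definition \ref{def:metric} directly by checking that the two composites in \eqref{eqn:metricinverse} are the identity, using the explicit matrices in \eqref{eqn:R4metric} and \eqref{eqn:R4invmetric}. Since $\Omega^1_A$ is free as a left $A$-module with basis $\{\dd z^i\}$ and the right $A$-action is by the invertible scalars $R^{ji}$ (which satisfy \eqref{eqn:RijRji}), the tensor products $\Omega^1_A\otimes_A\Omega^1_A$ and the triple tensor product are free left $A$-modules with the obvious bases $\dd z^i\otimes_A\dd z^j$ and $\dd z^i\otimes_A\dd z^j\otimes_A\dd z^k$. It therefore suffices to evaluate each composite on the basis elements $\dd z^k$ of $\Omega^1_A$ and check we get $\dd z^k$ back.

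First I would check that $g$ as defined really is $A$-central, so that it legitimately defines an $A$-bimodule map $g:A\to\Omega^1_A\otimes_A\Omega^1_A$ via $g(1)=g$; this is the computation referred to in the paragraph before the lemma, using $\dd z^i\,z^m = R^{mi}z^m\,\dd z^i$, the nonvanishing pattern of $(g_{ij})$, and the identity $R^{im}R^{mj}=1$ together with the explicit form of $R$ — the point being that moving a generator $z^m$ past $g_{ij}\,\dd z^i\otimes_A\dd z^j$ produces the factor $R^{im}R^{mj}$, which must equal $1$ whenever $g_{ij}\neq 0$; from \eqref{eqn:Rmatrix} one sees $g_{ij}\neq 0$ only for the pairs $(1,3),(2,4),(3,1),(4,2)$, and in each of these cases $R^{im}R^{mj}=1$ for all $m$. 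I would similarly note that $g^{-1}$ as given on the basis extends to a well-defined left $A$-module map, and is right $A$-linear by the same $R$-bookkeeping, hence an $A$-bimodule map.

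Next I would run the two composites. For the first, $\dd z^k \mapsto \dd z^k\otimes_A g = \sum_{i,j} g_{ij}\,\dd z^k\otimes_A\dd z^i\otimes_A\dd z^j$ — here I must be careful: $g$ is central so $\dd z^k\otimes_A g = \dd z^k\otimes_A g(1)$ and no $R$-factors appear — and then applying $g^{-1}\otimes_A\id$ gives $\sum_{i,j} g^{-1}(\dd z^k\otimes_A\dd z^i)\,g_{ij}\,\dd z^j = \sum_{i,j} g^{ki}\,g_{ij}\,\dd z^j$. By \eqref{eqn:R4delta} (in the form $\sum_i g^{ki}g_{ij}=\delta^k_j$, which follows from $(g^{ij})=2(g_{ij})^{-1}\cdot\tfrac12$... more precisely from the matrices being mutual inverses up to the stated normalisation) this collapses to $\dd z^k$. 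The second composite is the mirror image: $\dd z^k\mapsto g\otimes_A\dd z^k = \sum g_{ij}\,\dd z^i\otimes_A\dd z^j\otimes_A\dd z^k$, then $\id\otimes_A g^{-1}$ yields $\sum g_{ij}\,g^{jk}\,\dd z^i = \dd z^i\,\delta^k_i$... i.e. $\dd z^k$, again by \eqref{eqn:R4delta}.

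The main obstacle — really the only subtlety — is bookkeeping the noncommutative right $A$-action correctly when evaluating $g^{-1}$ on elements of the form $\omega\otimes_A g^\alpha$, since $g^{-1}$ is only right $A$-linear and one must be sure no stray $R^{ji}$ factors are dropped; the saving grace is precisely the centrality of $g$ established in the first step, which means the relevant tensors can be written with all $A$-coefficients pulled to the left before $g^{-1}$ is applied, so the computation reduces to the purely numerical matrix identity \eqref{eqn:R4delta}. I would therefore present the proof as: (1) recall centrality of $g$; (2) observe $g^{-1}$ is a well-defined $A$-bimodule map; (3) evaluate the two composites on the basis $\{\dd z^k\}$ and invoke \eqref{eqn:R4delta}.
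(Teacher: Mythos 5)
Your proposal is correct and follows essentially the same route as the paper: the paper also reduces the verification to the basis $\{\dd z^{k}\}$ and invokes \eqref{eqn:R4delta} (phrased via the equivalent conditions of Remark \ref{rem:metricindices}), while the centrality of $g$ and bimodule property of $g^{-1}$ are handled in the text immediately preceding the lemma, exactly as you recall them. One small index slip in your centrality bookkeeping: commuting $z^{m}$ from the right through $g_{ij}\,\dd z^{i}\otimes_{A}\dd z^{j}$ produces the factor $R^{mi}\,R^{mj}$ (equivalently $R^{im}\,R^{jm}$ when moving from the left), not $R^{im}\,R^{mj}$ --- the latter equals $e^{\mp 2i\theta}$ for some $m$ on the pairs $(1,3),(2,4)$, whereas the correct factor is indeed $1$ whenever $g_{ij}\neq 0$, so the conclusion stands after this correction.
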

\begin{proof}
It is sufficient to verify the conditions in Remark \ref{rem:metricindices}
on the basis $\{\dd z^{k} \in \Omega^{1}_{A}\}$. Using \eqref{eqn:R4delta}, we compute
\begin{flalign}
	\sum_{i,j = 1}^{4}g_{ij}\, \dd z^{i}\, g^{-1}(\dd z^{j}\otimes_{A}\dd z^{k}) 
	= \sum_{i,j= 1}^{4}g_{ij}\, \dd z^{i}\, g^{jk}
	= \dd z^{k}\quad.
\end{flalign}
The second condition in Remark \ref{rem:metricindices} is confirmed 
through a similar calculation.
\end{proof}

We define a connection $\nabla : \Omega^{1}_{A} \to \Omega^{1}_{A} \otimes_{A} \Omega^{1}_{A}$ 
on $\Omega^1_A$ by
\begin{flalign}\label{eqn:R4StandConn}
\nabla(\dd z^{i}) \,:=\, 0\quad
\end{flalign}
and the left Leibniz rule. Furthermore, we define
an $A$-bimodule isomorphism
$\sigma : \Omega^{1}_{A} \otimes_{A} \Omega^{1}_{A} \to 
\Omega^{1}_{A} \otimes_{A} \Omega^{1}_{A}$ by
\begin{flalign}\label{eqn:R4flip}
\sigma (\dd z^{i} \otimes_{A} \dd z^{j}) 
\,:=\, R^{ji}\, \dd z^{j} \otimes_{A} \dd z^{i} 
\end{flalign}
and left $A$-linear extension to all of $\Omega^{1}_{A} \otimes_{A} \Omega^{1}_{A}$.
(Note that this is analogous to the commutation relations in \eqref{eqn:R4algebra}.)
\begin{lem}\label{lem:R4bimodconn}
The pair $(\nabla,\sigma)$ introduced in \eqref{eqn:R4StandConn} and \eqref{eqn:R4flip} 
defines a bimodule connection.
\end{lem}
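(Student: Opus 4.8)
The plan is to verify directly the left and right Leibniz rules for $(\nabla,\sigma)$ against Definition~\ref{def:Connections}(ii). Since $\nabla$ is defined by $\nabla(\dd z^i)=0$ together with the left Leibniz rule extension, the left Leibniz rule $\nabla(a\,\omega)=a\,\nabla(\omega)+\dd a\otimes_A\omega$ holds essentially by construction, so the only real content is the right Leibniz rule $\nabla(\omega\,a)=\nabla(\omega)\,a+\sigma(\omega\otimes_A\dd a)$ and the claim that $\sigma$ is a well-defined $A$-bimodule isomorphism. First I would check that $\sigma$ in \eqref{eqn:R4flip} is well-defined: it is declared on the left-module basis $\{\dd z^i\otimes_A\dd z^j\}$ and extended left $A$-linearly, so well-definedness as a left $A$-module map is automatic; what needs checking is right $A$-linearity, i.e.\ compatibility with the bimodule relations \eqref{eqn:Rzdz}. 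One computes $\sigma(\dd z^i\otimes_A\dd z^j\,z^k)$ two ways using $\dd z^j\,z^k=R^{kj}z^k\,\dd z^j$ and $\dd z^i\,z^k=R^{ki}z^k\,\dd z^i$, and finds both equal $R^{ji}R^{kj}R^{ki}\,z^k\,\dd z^j\otimes_A\dd z^i$; bijectivity follows since the inverse is given by the analogous formula with $R^{ij}=\overline{R^{ji}}=(R^{ji})^{-1}$ (using \eqref{eqn:Rhermitian}--\eqref{eqn:RijRji}), which exhibits $\sigma^{-1}$ explicitly.

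Next I would check the right Leibniz rule. By left $A$-linearity of all maps involved and the Leibniz rule for $\dd$, it suffices to verify it on generators, i.e.\ to check $\nabla(\dd z^i\,z^k)=\nabla(\dd z^i)\,z^k+\sigma(\dd z^i\otimes_A\dd z^k)$ for all $i,k$. The left-hand side is $\nabla(R^{ki}z^k\,\dd z^i)$ by \eqref{eqn:Rzdz}, which by the left Leibniz rule equals $R^{ki}\big(z^k\,\nabla(\dd z^i)+\dd z^k\otimes_A\dd z^i\big)=R^{ki}\,\dd z^k\otimes_A\dd z^i$ since $\nabla(\dd z^i)=0$. The right-hand side is $0+\sigma(\dd z^i\otimes_A\dd z^k)=R^{ki}\,\dd z^k\otimes_A\dd z^i$ by \eqref{eqn:R4flip}, so the two agree. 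One then extends from generators to arbitrary $a\in A$ by induction on monomial length using the Leibniz rule, and to general $\omega\in\Omega^1_A$ by left $A$-linearity.

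The step requiring the most care is the well-definedness of $\sigma$: a priori a map defined on a free-module basis and extended left-linearly need not respect the right-module structure, so one genuinely has to check that both $\sigma(\dd z^i\otimes_A \dd z^j\,z^k)$ and $\sigma(\dd z^i\otimes_A\dd z^j)\,z^k$ give the same element, which boils down to the identity $R^{ji}R^{kj}R^{ki}=R^{ji}\,(R^{kj}R^{ki})$ being consistent with reordering on the other side; here \eqref{eqn:RijRji} is the key algebraic input. In the write-up I would phrase this as: ``The statement holds by a direct computation using the commutation relations \eqref{eqn:Rzdz} and the identities \eqref{eqn:Rhermitian}, \eqref{eqn:RijRji}'', spelling out only the right Leibniz check on generators and the bijectivity of $\sigma$, since everything else is immediate from the construction.
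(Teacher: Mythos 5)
Your proposal is correct and follows essentially the same route as the paper: the only real content is the right Leibniz rule, which the paper verifies directly on monomials $z^{j_1}\cdots z^{j_n}$ of arbitrary length using \eqref{eqn:Rzdz}, \eqref{eqn:RijRji} and \eqref{eqn:R4flip}, while you check it on generators and extend by induction and left $A$-linearity (the induction step implicitly uses the right $A$-linearity of $\sigma$, which you do verify, and which the paper leaves implicit). One small correction to your bijectivity argument: the ``analogous formula with $R^{ij}$'', i.e.\ $\dd z^i\otimes_A\dd z^j\mapsto R^{ij}\,\dd z^j\otimes_A\dd z^i$, is \emph{not} the inverse of $\sigma$ (composing it with $\sigma$ produces the factor $R^{ji}R^{ji}\neq 1$ in general); rather $\sigma$ is its own inverse, since $\sigma^2(\dd z^i\otimes_A\dd z^j)=R^{ji}R^{ij}\,\dd z^i\otimes_A\dd z^j=\dd z^i\otimes_A\dd z^j$ by \eqref{eqn:RijRji}. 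Relatedly, in your well-definedness check \eqref{eqn:RijRji} is not actually needed, since both sides come out as the same product of commuting scalars $R^{ji}R^{kj}R^{ki}$; that identity enters only in the bijectivity of $\sigma$ and in the paper's rearrangement of the monomial terms.
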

\begin{proof}
It remains to confirm the right Leibniz rule from Definition \ref{def:Connections} (ii). 
For this it is sufficient to consider homogeneous elements
$a=z^{j_{1}} \cdots z^{j_{n}} \in A$, for some $n \in \bbZ_{\geq 0}$. We compute
\begin{flalign}
\nn  \nabla\big(\dd z^{i}\, z^{j_{1}}\cdots z^{j_{n}}\big) 
&= \nabla\big(R^{j_{1}i} \cdots R^{j_{n}i}\, z^{j_{1}} \cdots z^{j_{n}}\, \dd z^{i}\big)\\
\nn & =  R^{j_{1}i}\cdots R^{j_{n}i}\, \dd (z^{j_{1}} \cdots z^{j_{n}}) \otimes_{A} \dd z^{i}\\
  &= \sigma\big(\dd z^{i} \otimes_{A} \dd (z^{j_{1}}\cdots z^{j_{n}}) \big)\quad,
\end{flalign}
where in the first equality we used \eqref{eqn:Rzdz} and in the second equality we used the 
left Leibniz rule for the connection \eqref{eqn:R4StandConn}. The last equality follows
by using the Leibniz rule to write $\dd (z^{j_{1}}\cdots z^{j_{n}})
=\sum_{k=1}^n  z^{j_{1}}\cdots z^{j_{k-1}}\, \dd z^{j_k}\, z^{j_{k+1}}\cdots z^{j_{n}}$ and then using
\eqref{eqn:Rzdz}, \eqref{eqn:RijRji} and the definition of $\sigma $ in \eqref{eqn:R4flip}
in order to rearrange these terms.
\end{proof}
\begin{propo}\label{prop:R4Riemstr}
The pair $(g,(\nabla,\sigma))$ defined above is a 
Riemannian structure on $(\Omega^{1}_{A},\dd)$.
\end{propo}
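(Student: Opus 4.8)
The plan is to verify the two remaining conditions of Definition \ref{def:Riemannian}, namely the symmetry property \eqref{eqn:symmetry} and the metric compatibility property \eqref{eqn:metriccompatibility}, for the explicit data $(g,(\nabla,\sigma))$ introduced in \eqref{eqn:R4metric}, \eqref{eqn:R4StandConn} and \eqref{eqn:R4flip}; Lemmas \ref{lem:R4metricpair} and \ref{lem:R4bimodconn} already establish that $g$ is a metric and that $(\nabla,\sigma)$ is a bimodule connection, so nothing else is needed. Since all maps involved are left $A$-linear, it suffices to check both diagrams on the left $A$-basis $\{\dd z^i\otimes_A\dd z^j\}$ of $\Omega^1_A\otimes_A\Omega^1_A$.

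For the symmetry property, I would compute $g^{-1}\big(\sigma(\dd z^i\otimes_A\dd z^j)\big) = R^{ji}\,g^{-1}(\dd z^j\otimes_A\dd z^i) = R^{ji}\,g^{ji}$ using \eqref{eqn:R4flip} and \eqref{eqn:R4invmetric}, and compare with $g^{-1}(\dd z^i\otimes_A\dd z^j)=g^{ij}$. From the explicit matrices \eqref{eqn:Rmatrix} and \eqref{eqn:R4invmetric} one sees that $g^{ij}$ is nonzero only for the index pairs $(1,3),(3,1),(2,4),(4,2)$, and for each of these pairs the corresponding entry of $R$ equals $1$ (the diagonal blocks of $R$ all have $1$'s in those slots); hence $R^{ji}\,g^{ji}=g^{ji}=g^{ij}$ since $(g^{ij})$ is symmetric, which gives commutativity of \eqref{eqn:symmetry}.

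For metric compatibility, since $\nabla(\dd z^i)=0$ by \eqref{eqn:R4StandConn}, the tensor product connection $\nabla^\otimes$ from Proposition \ref{prop:tensorconnection} also vanishes on the basis elements $\dd z^i\otimes_A\dd z^j$. Therefore the top-right path of \eqref{eqn:metriccompatibility} sends $\dd z^i\otimes_A\dd z^j$ to $0$. The bottom-left path sends it to $\dd\big(g^{-1}(\dd z^i\otimes_A\dd z^j)\big)=\dd(g^{ij})=0$ because each $g^{ij}$ is a constant (a scalar multiple of the unit $1\in A$), so $\dd$ annihilates it. Thus both paths agree and \eqref{eqn:metriccompatibility} commutes. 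I do not anticipate a genuine obstacle here — the computation is entirely routine because the connection and the inverse metric are ``constant'' in these coordinates; the only point requiring a small check is matching the nonzero pattern of $(g^{ij})$ against the entries of $R$ in the symmetry step, which works precisely because of the specific form of the matrix \eqref{eqn:Rmatrix}.
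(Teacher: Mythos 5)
Your proposal is correct and follows essentially the same route as the paper's proof: both verify symmetry and metric compatibility of Definition \ref{def:Riemannian} directly on the basis elements $\dd z^{i}\otimes_{A}\dd z^{j}$, with metric compatibility reducing to $\nabla(\dd z^{i})=0$ and $g^{-1}(\dd z^{i}\otimes_{A}\dd z^{j})\in\bbC$ being annihilated by $\dd$. The only difference is that the paper declares the symmetry property ``immediate'' from \eqref{eqn:R4invmetric}, \eqref{eqn:R4flip} and \eqref{eqn:Rmatrix}, whereas you spell out the (correct) observation that $R^{ji}=1$ precisely at the index pairs where $g^{ji}\neq 0$.
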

\begin{proof}
We have to verify the two properties of Definition \ref{def:Riemannian}. 
The symmetry property \eqref{eqn:symmetry} is immediate from the definition of 
$g^{-1}$ in \eqref{eqn:R4invmetric}, $\sigma$ in \eqref{eqn:R4flip} and 
$R$ in \eqref{eqn:Rmatrix}. The metric compatibility property \eqref{eqn:metriccompatibility} 
also holds since
\begin{flalign}
	(\id \otimes_{A} g^{-1})\nabla^{\otimes}\big(\dd z^{i}\otimes_{A}\dd z^{j}\big) 
	= 0 
	= \dd \big( g^{-1}(\dd z^{i}\otimes_{A}\dd z^{j}) \big) \otimes_{A}1\quad,
\end{flalign}
where the first equality follows from \eqref{eqn:R4StandConn} 
and the second equality from $g^{-1}(\dd z^{i}\otimes_{A}\dd z^{j}) \in \bbC$.
\end{proof}

The final step is to introduce a spinorial structure.
For the spinor module, we take the $4$-dimensional free left $A$-module
\begin{flalign}\label{eqn:R4spinor}
\EEE \, :=\, A^{4} \,\in\,{}_A\Mod \quad.
\end{flalign}
We denote by $\{e_\alpha \in\EEE\,:\,\alpha=1,\dots,4\}$ the standard basis
for $A^4$, i.e.\ $e_{\alpha}$ is the vector with $1$ in the entry $\alpha$ and $0$ elsewhere.
We define a spin connection $\nabla^{\sp} : \EEE \to \Omega^1_A \otimes_{A} \EEE$ by
\begin{flalign}\label{eqn:R4spinconn}
	\nabla^{\sp}(e_{\alpha})\,:=\, 0
\end{flalign}
and the left Leibniz rule. Introducing a Clifford multiplication
requires some preparations. First, let us recall that the standard
Euclidean gamma matrices in Cartesian coordinates on $\bbR^4$
can be expressed in terms of the $2\times 2$ identity matrix $I_2$
and the three Pauli matrices
\begin{flalign}
	\sigma^{1} = 
	\begin{pmatrix}
	0 & 1\\
	1 & 0
	\end{pmatrix}\quad,\quad
	\sigma^{2} = 
	\begin{pmatrix}
	0 & -i\\
	i & 0
	\end{pmatrix}\quad,\quad
	\sigma^{3} = 
	\begin{pmatrix}
	1 & 0\\
	0 & -1
	\end{pmatrix}
	\quad.
\end{flalign}
Transforming the standard gamma matrices from Cartesian coordinates to our complex
coordinates $z^i$, we obtain
\begin{flalign}
\nn	\gamma^{1} &=
	\begin{pmatrix}
	0 & -\sigma^{1} - i\,\sigma^{2}\\
	\sigma^{1} + i\,\sigma^{2} & 0
	\end{pmatrix} 
	\quad,\quad
	\gamma^{2} =
	\begin{pmatrix}
	0 & -\sigma^{3} - I_{2}\\
	\sigma^{3} -I_{2} & 0
	\end{pmatrix} \quad, \\
	\gamma^{3} &=
	\begin{pmatrix}
	0 & -\sigma^{1} + i\,\sigma^{2}\\
	\sigma^{1} - i\,\sigma^{2} & 0
	\end{pmatrix}
	\quad,\quad
	\gamma^{4} =
	\begin{pmatrix}
	0 & -\sigma^{3} + I_{2}\\
	\sigma^{3} + I_{2} & 0
	\end{pmatrix}\quad. \label{eqn:R4gammamat}
\end{flalign}
By construction, these gamma matrices satisfy the Clifford relations
$\{\gamma^{i},\gamma^{j}\} := \gamma^{i}\,\gamma^{j} + \gamma^{j}\,\gamma^{i} 
= -2\,g^{ij}\,I_{4}$, with $g^{ij}$ given in \eqref{eqn:R4invmetric}. 
These gamma matrices are however {\em not} directly applicable
to our noncommutative space $\bbR^4_\theta$, because the noncommutative
Clifford relations \eqref{eqn:cliffordrels} are given by an anticommutator involving the isomorphism 
$\sigma$ in \eqref{eqn:R4flip}. To address this issue, we introduce the deformed
gamma matrices
\begin{flalign}
\nn	\gamma_\theta^{1} &=
	\begin{pmatrix}
	0 & e^{\frac{i}{4}\theta}\, (-\sigma^{1} - i\,\sigma^{2})\\
	e^{-\frac{i}{4}\theta}\, (\sigma^{1} + i\,\sigma^{2}) & 0
	\end{pmatrix} 
	\quad,\quad
	\gamma_\theta^{2} =
	\begin{pmatrix}
	0 & e^{-\frac{i}{4}\theta}\,(-\sigma^{3} - I_{2})\\
	e^{\frac{i}{4}\theta}\,(\sigma^{3} -I_{2}) & 0
	\end{pmatrix} \quad, \\
	\gamma_\theta^{3} &=
	\begin{pmatrix}
	0 & e^{\frac{i}{4}\theta}\, (-\sigma^{1} + i\,\sigma^{2})\\
	e^{-\frac{i}{4}\theta}\,(\sigma^{1} - i\,\sigma^{2}) & 0
	\end{pmatrix}
	\quad,\quad
	\gamma_\theta^{4} =
	\begin{pmatrix}
	0 & e^{-\frac{i}{4}\theta}\, (-\sigma^{3} + I_{2})\\
	e^{\frac{i}{4}\theta}\, (\sigma^{3} + I_{2}) & 0
	\end{pmatrix}\quad, \label{eqn:R4NCgammamat}
\end{flalign}
which can be obtained from the cocycle deformation techniques
developed in \cite{Brain,AschieriSchenkel,BSShom}.
We define the associated Clifford multiplication
$\gamma : \Omega^{1}_{A}\otimes_A\EEE \to \EEE$  by
\begin{flalign}\label{eqn:R4cliffmult}
	\gamma(\dd z^{i}\otimes_{A} e_{\alpha}) \,:=\, \gamma_{\theta}^{i}\, e_{\alpha}
\end{flalign}
and left $A$-linear extension to all of $\Omega^{1}_{A}\otimes_A\EEE $,
where $\gamma_{\theta}^{i}\, e_{\alpha}$ denotes the action of the matrix 
$\gamma_{\theta}^{i}$ on the basis spinors $e_\alpha\in \EEE= A^4$.
Let us record some useful identities that we will use later.
\begin{lem}\label{lem:R4rcomm}
Define the $\theta$-anticommutator $\{\gamma_{\theta}^{i} , \gamma_{\theta}^{j}\}^{}_{\theta} 
:= \gamma_{\theta}^{i}\,\gamma_{\theta}^{j} + R^{ji}\,\gamma_{\theta}^{j}\,\gamma_{\theta}^{i}$ 
and the $\theta$-commutator $[\gamma_{\theta}^{i} ,  \gamma_{\theta}^{j}]^{}_{\theta} 
:= \gamma_{\theta}^{i}\,\gamma_{\theta}^{j} - R^{ji}\,\gamma_{\theta}^{j}\,\gamma_{\theta}^{i}$. Then
the following properties hold true:
\begin{itemize}
\item[(i)] $\{\gamma_{\theta}^{i} , \gamma_{\theta}^{j}\}_{\theta}^{} = R^{ji}\,\{\gamma_{\theta}^{j} , \gamma_{\theta}^{i}\}_{\theta}^{}$
\item[(ii)] $[\gamma_{\theta}^{i} , \gamma_{\theta}^{j}]_{\theta}^{} = -R^{ji}\,[\gamma_{\theta}^{j} , \gamma_{\theta}^{i}]_{\theta}^{}$
\item[(iii)] $\{\gamma_{\theta}^{i} , \gamma_{\theta}^{j}\}_{\theta}^{} = -2\, g^{ij}\, I_{4}$
\end{itemize}
\end{lem}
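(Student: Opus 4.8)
The plan is to verify the three identities by direct computation with the explicit $4\times 4$ matrices \eqref{eqn:R4NCgammamat}, exploiting the block-off-diagonal structure. All the $\gamma_\theta^i$ have the form $\begin{pmatrix} 0 & B_i \\ C_i & 0\end{pmatrix}$ with $2\times 2$ blocks, so that $\gamma_\theta^i\gamma_\theta^j = \begin{pmatrix} B_i C_j & 0 \\ 0 & C_i B_j\end{pmatrix}$ is block-diagonal; this reduces every statement to identities among products of the matrices $\sigma^1\pm\ii\sigma^2$, $\sigma^3\pm I_2$ and scalar phases $\e^{\pm\frac{\ii}{4}\theta}$.

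First I would prove (i) and (ii) simultaneously. Writing $\gamma_\theta^i\gamma_\theta^j =: M_{ij}$, one has $\{\gamma_\theta^i,\gamma_\theta^j\}_\theta = M_{ij} + R^{ji}M_{ji}$ and $[\gamma_\theta^i,\gamma_\theta^j]_\theta = M_{ij} - R^{ji}M_{ji}$. Multiplying the defining expression for $\{\gamma_\theta^j,\gamma_\theta^i\}_\theta = M_{ji} + R^{ij}M_{ij}$ by $R^{ji}$ and using the relation $R^{ij}R^{ji}=1$ from \eqref{eqn:RijRji} immediately gives $R^{ji}\{\gamma_\theta^j,\gamma_\theta^i\}_\theta = R^{ji}M_{ji} + M_{ij} = \{\gamma_\theta^i,\gamma_\theta^j\}_\theta$, which is (i); the same manipulation with the minus sign yields (ii). So (i) and (ii) are purely formal consequences of \eqref{eqn:RijRji} and require no matrix computation at all.

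The substantive part is (iii), which I would check case by case on the indices $i,j\in\{1,\dots,4\}$. By (i) it suffices to treat the pairs with $i\le j$, and one splits into the diagonal cases $i=j$ and the six off-diagonal pairs. For $i=j$ the phases $\e^{\pm\frac{\ii}{4}\theta}$ cancel in each product $B_iC_i$ and $C_iB_i$, so $\{\gamma_\theta^i,\gamma_\theta^i\}_\theta = (1+R^{ii})\,(\gamma_\theta^i)^2 = 2(\gamma_\theta^i)^2$, and one checks $(\gamma_\theta^i)^2 = -g^{ii}I_4$ using $(\sigma^1+\ii\sigma^2)(\sigma^1-\ii\sigma^2) = 2(I_2+\sigma^3)$ etc.; since $g^{11}=g^{22}=0$ this gives $0$ for $i=1,2$, matching \eqref{eqn:R4invmetric}. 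For the off-diagonal pairs I would compute $B_iC_j$, $C_iB_j$ and the $R$-weighted reflected terms, tracking the phase factors: the pairs $(1,3)$ and $(2,4)$ are the ones where $g^{ij}\ne 0$ and there the relevant block products of $(\sigma^1\pm\ii\sigma^2)$ or $(\sigma^3\pm I_2)$ with their conjugates combine with $R^{31}=R^{13}=1$ (resp.\ $R^{42}=R^{24}=1$) to produce $-2g^{ij}I_4 = -4I_4$; for the remaining ``mixed'' pairs $(1,2),(1,4),(2,3),(3,4)$ one checks that the phase factors $\e^{\pm\frac{\ii}{4}\theta}$ and the entries $R^{ji}\in\{\e^{\pm\ii\theta}\}$ conspire so that the two block contributions cancel, giving $0=-2g^{ij}I_4$. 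The main obstacle is simply the bookkeeping in this last case: one must confirm that the deformation phases in \eqref{eqn:R4NCgammamat} were chosen precisely so that the $R^{ji}$-weighted anticommutator reproduces the undeformed relation $\{\gamma^i,\gamma^j\} = -2g^{ij}I_4$, i.e.\ that $\e^{\pm\frac{\ii}{4}\theta}$ appearing in $B_iC_j$ versus the $\e^{\pm\frac{\ii}{4}\theta}$ in $R^{ji}B_jC_i$ match up. This is a finite check and I would not write out all sixteen products, but only indicate the pattern and record that it follows from the cocycle-deformation construction of \cite{Brain,AschieriSchenkel,BSShom} together with \eqref{eqn:Rhermitian}--\eqref{eqn:RijRji}.
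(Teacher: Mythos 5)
Your proposal is correct and follows essentially the same route as the paper: items (i) and (ii) are derived formally from the definitions together with $R^{ij}\,R^{ji}=1$ (eq.\ \eqref{eqn:RijRji}), and item (iii) is verified by a direct, finite case-by-case computation with the explicit block-off-diagonal matrices \eqref{eqn:R4NCgammamat}, which is exactly what the paper's ``straightforward calculation'' amounts to. Your sample checks (the vanishing squares, the $(1,3)$ pair giving $-4\,I_4$, and the phase cancellation for mixed pairs) are accurate, so no gap to report.
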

\begin{proof}
Items (i) and (ii) follow directly from the definitions and \eqref{eqn:RijRji}.
Item (iii) is a straightforward calculation.
\end{proof}

\begin{propo}
The triple $(\EEE, \nabla^{\sp}, \gamma)$ defined in 
\eqref{eqn:R4spinor}, \eqref{eqn:R4spinconn} and \eqref{eqn:R4cliffmult}
is a spinorial structure on the Riemannian structure $(g,(\nabla,\sigma))$.
\end{propo}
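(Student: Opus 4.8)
The plan is to verify the two defining properties of a spinorial structure from Definition \ref{def:spinorial}, namely the Clifford relations \eqref{eqn:cliffordrels} and Clifford compatibility \eqref{eqn:cliffordcomp}. Since all the structure maps on $\bbR^4_\theta$ are built by left $A$-linear extension from their values on the generators $\dd z^i$ and the basis spinors $e_\alpha$, it suffices to check both diagrams on elements of the form $\dd z^i\otimes_A\dd z^j\otimes_A e_\alpha$ and $\dd z^i\otimes_A e_\alpha$ respectively; left $A$-linearity of every map involved then extends the identities to all of the relevant modules. One has to be slightly careful that the ``obvious'' check on generators really does extend, because $\Omega^1_A\otimes_A\Omega^1_A$ is generated over $A$ on the \emph{left} by the $\dd z^i\otimes_A\dd z^j$ (using the commutation relations \eqref{eqn:Rzdz} to move any interior coefficient to the left), so this is fine.

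For the Clifford relations, the computation is essentially Lemma \ref{lem:R4rcomm}(iii) repackaged. Evaluating the left-hand side of \eqref{eqn:cliffordrels} on $\dd z^i\otimes_A\dd z^j\otimes_A e_\alpha$ gives $\gamma_\theta^i\gamma_\theta^j e_\alpha + \gamma_{[2]}\big(\sigma(\dd z^i\otimes_A\dd z^j)\otimes_A e_\alpha\big)$; using the definition \eqref{eqn:R4flip} of $\sigma$ this second term is $R^{ji}\gamma_\theta^j\gamma_\theta^i e_\alpha$, so the sum is precisely the $\theta$-anticommutator $\{\gamma_\theta^i,\gamma_\theta^j\}_\theta e_\alpha$, which by Lemma \ref{lem:R4rcomm}(iii) equals $-2g^{ij}I_4\,e_\alpha$. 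The right-hand side of \eqref{eqn:cliffordrels} evaluated on the same element is $-2g^{-1}(\dd z^i\otimes_A\dd z^j)\,e_\alpha = -2g^{ij}e_\alpha$ by \eqref{eqn:R4invmetric}, so the diagram commutes. (I would also note that $\sigma$ as defined in \eqref{eqn:R4flip} is indeed an $A$-bimodule isomorphism, using \eqref{eqn:RijRji}, so that $(\nabla,\sigma)$ of Lemma \ref{lem:R4bimodconn} really is a bimodule connection in the sense required.)

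For Clifford compatibility, the point is that all three connections $\nabla$, $\nabla^\sp$ and $\sigma$ are the ``flat'' ones: $\nabla(\dd z^i)=0$, $\nabla^\sp(e_\alpha)=0$, and $\sigma$ is the constant twist \eqref{eqn:R4flip}. Tracing the diagram \eqref{eqn:cliffordcomp}: both the top-right path $(\id\otimes_A\gamma)\circ\nabla^\otimes$ and the bottom path $\nabla^\sp\circ\gamma$, evaluated on $\dd z^i\otimes_A e_\alpha$, vanish, because $\nabla^\otimes(\dd z^i\otimes_A e_\alpha)=\nabla(\dd z^i)\otimes_A e_\alpha+(\sigma\otimes_A\id)(\dd z^i\otimes_A\nabla^\sp(e_\alpha))=0$ and $\gamma(\dd z^i\otimes_A e_\alpha)=\gamma_\theta^i e_\alpha$ is a constant (complex) spinor on which $\nabla^\sp$ vanishes by the Leibniz rule. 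Hence the square trivially commutes on generators, and by left $A$-linearity on all of $\Omega^1_A\otimes_A\EEE$.

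The main thing to be careful about — the closest thing to an obstacle — is the bookkeeping of whether evaluating on generators genuinely suffices for each diagram, since the maps $\gamma_{[2]}$, $\nabla^\otimes$ and $g^{-1}$ involve tensor factors over $A$ and one must confirm left $A$-linearity of the full composites in each path of each diagram (for \eqref{eqn:cliffordcomp} this uses Proposition \ref{prop:tensorconnection}, whose $\nabla^\otimes$ satisfies only a left Leibniz rule, not left $A$-linearity — but the failure term $\dd a\otimes_A(\ldots)$ is matched on both sides, so the comparison of the two paths is still $A$-linear in the appropriate sense). Once that is granted, both verifications reduce to the short generator-level computations above, so I would simply state that the required properties hold by direct computation on generators, invoking Lemma \ref{lem:R4rcomm} for the Clifford relations and the flatness of $\nabla,\nabla^\sp,\sigma$ for Clifford compatibility.
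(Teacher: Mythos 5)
Your proposal is correct and follows essentially the same route as the paper: verify the Clifford relations on the generators $\dd z^i\otimes_A\dd z^j\otimes_A e_\alpha$ via Lemma \ref{lem:R4rcomm} (iii) and the definition \eqref{eqn:R4flip} of $\sigma$, and verify Clifford compatibility on $\dd z^i\otimes_A e_\alpha$ where both paths of \eqref{eqn:cliffordcomp} vanish by flatness of $\nabla$ and $\nabla^{\sp}$. Your extra remark that the two paths in \eqref{eqn:cliffordcomp} are not individually $A$-linear but have equal Leibniz defect terms, so that their difference is left $A$-linear and a generator check suffices, is a correct (and welcome) piece of bookkeeping that the paper leaves implicit.
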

\begin{proof}
We have to verify the two properties of Definition \ref{def:spinorial}. 
The Clifford relations \eqref{eqn:cliffordrels} follow directly from 
Lemma \ref{lem:R4rcomm} (iii), because
\begin{flalign}
\nn (\gamma_{[2]} + \gamma_{[2]} \, (\sigma\otimes_A\id))\big(\dd z^{i} \otimes_{A} \dd z^{j} \otimes_{A} e_{\alpha}\big) 
	&= (\gamma_{\theta}^{i}\,\gamma_{\theta}^{j} + R^{ji}\,\gamma_{\theta}^{j}\,\gamma_{\theta}^{i})\, e_{\alpha} 
	= \{\gamma_{\theta}^{i} , \gamma_{\theta}^{j}\}_{\theta}^{}\, e_{\alpha}\\ 
	&= -2\, g^{ij}\, e_{\alpha} 
	= -2\, g(\dd z^{i} \otimes_{A} \dd z^{j})\, e_{\alpha}\quad.
\end{flalign}
Clifford compatibility \eqref{eqn:cliffordcomp} follows from
\begin{flalign}
(\id\otimes_A\gamma)\nabla^{\otimes}(\dd z^{i} \otimes_{A} e_{\alpha}) = 0= \nabla^{\sp}\gamma(\dd z^{i} \otimes_{A} e_{\alpha})\quad,
\end{flalign}
where we used \eqref{eqn:R4StandConn}, \eqref{eqn:R4spinconn} and \eqref{eqn:R4cliffmult}. 
\end{proof}

We can now compute the Dirac operator 
\eqref{eqn:diracop} associated with our spinorial structure on $\bbR^{4}_{\theta}$.
Expressing elements $s = \sum_{\alpha = 1}^{4}s^{\alpha}\, e_{\alpha} \in \EEE$
in terms of our basis and introducing the notation $\dd a =: \sum_{i=1}^4 \partial_{i}a\, \dd z^{i}$, 
for all $a\in A$, we obtain
\begin{flalign}
\nn	D(s) &= \gamma\big(\nabla^{\sp}(s)\big) = \sum_{\alpha=1}^4\gamma(\dd s^{\alpha} \otimes_{A} e_{\alpha}) 
	= \sum_{\alpha=1}^4\sum_{i=1}^4\partial_{i}s^{\alpha}\, \gamma_{\theta}^{i}\, e_{\alpha} \\
	&=  \sum_{\alpha=1}^4\sum_{i=1}^4 \gamma_{\theta}^{i}\, \partial_{i}s^{\alpha}\, e_{\alpha} 
	= \sum_{i=1}^4\gamma_{\theta}^{i}\, \partial_{i}s\quad,\label{eqn:R4DiracOp}
\end{flalign}
where in the last equality we used the shorthand notation 
$\partial_{i}s := \sum_{\alpha=1}^4\partial_{i}s^{\alpha}\, e_{\alpha}$.

\subsection{\label{subsec:sphere}Noncommutative hypersurface $\bbS^3_\theta \hookrightarrow \bbR^4_\theta$}
We now apply our construction from Section \ref{sec:construction}
to induce the differential, Riemannian and spinorial structure
on $\bbR^4_\theta$ to the noncommutative $3$-sphere
$\bbS^3_\theta \hookrightarrow \bbR^4_\theta$.
This amounts to verifying that this example is a noncommutative hypersurface 
in the sense of Definition \ref{def:hypersurface} and that the Assumptions 
\ref{assu:dftransparent}, \ref{assu:Pitransparent} and \ref{assu:nabladdfcentral} 
for our general construction hold true. We shall also provide
explicit expressions for these induced structures and in particular for
the induced Dirac operator. To simplify our notation, we will suppress
in what follows the square brackets denoting equivalence classes.
It will be clear from the context and our general construction in Section 
\ref{sec:construction} which of the following expressions are considered in quotient spaces.
\sk

The algebra $B=B_{\bbS^3_\theta}$ of the noncommutative Connes-Landi $3$-sphere 
\cite{CL,CDV} is defined as the quotient
\begin{flalign}\label{eqn:S3algebra}
B  \,:=\, A\big/(f)
\end{flalign}
of the algebra $A=A_{\bbR^4_\theta}$ of $\bbR^4_\theta$ 
(cf.\ \eqref{eqn:R4algebra}) by the ideal generated by the unit sphere relation
\begin{flalign}\label{eqn:S3f}
f \,:=\,\frac{1}{2}\Big( \sum_{i,j=1}^4 g_{ij}\, z^{i}\, z^{j}\, - 1 \Big) \,=\,  \frac{1}{2}\Big(z^{1}\, \overline{z^{1}} + z^{2}\, \overline{z^{2}} - 1\Big) \quad,
\end{flalign}
where the prefactor $\tfrac{1}{2}$ is a convenient normalization for 
the generator of the ideal $(f)\subset A$ that is chosen to match the requirements of Example \ref{ex:levelset}.
From the commutation relations given by \eqref{eqn:R4algebra} and \eqref{eqn:Rmatrix},
one checks that $f\in\mathcal{Z}(A)\subseteq A$ is central. 
\begin{propo}\label{prop:S3etaPi}
The $1$-form
\begin{flalign}\label{eqn:S3eta}
\nu \,:=\, \dd f \,=\, \sum_{i,j=1}^4 g_{ij}\, z^{i}\, \dd z^{j} \, \in\, \Omega^1_A
\end{flalign}
is central and normalized. Hence, by Example \ref{ex:levelset},
$B=B_{\bbS^3_\theta}$ is a noncommutative hypersurface 
of $A=A_{\bbR^4_\theta}$  in the sense of Definition \ref{def:hypersurface}.
The projector $\Pi : q_!(\Omega^1_A)\to q_!(\Omega^1_A)$ 
from Proposition \ref{prop:Piproperties} reads explicitly as
\begin{flalign}\label{eqn:S3Pi}
\Pi (\dd z^{i}) \, =\, \dd z^{i} - z^{i}\, \nu\quad.
\end{flalign}
\end{propo}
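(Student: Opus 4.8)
The plan is to verify the three claims in Proposition \ref{prop:S3etaPi} in turn, namely that $\nu=\dd f$ is central, that it satisfies the normalization \eqref{eqn:etanormalized}, and that the projector $\Pi$ has the stated form. For centrality, I would not try to check $a\,\nu=\nu\,a$ for arbitrary $a$, but only for the generators $z^k$, since centrality then propagates multiplicatively. Concretely, using the right $A$-action \eqref{eqn:Rzdz} one computes $\nu\, z^k = \sum_{i,j} g_{ij}\, z^i\, \dd z^j\, z^k = \sum_{i,j} g_{ij}\, R^{kj}\, z^i\, z^k\, \dd z^j$, and then one has to move $z^k$ past $z^i$ using \eqref{eqn:R4algebra}, i.e.\ $z^i z^k = R^{ki}\, z^k z^i$, to obtain $\sum_{i,j} g_{ij}\, R^{kj}\, R^{ki}\, z^k\, z^i\, \dd z^j$. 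This should equal $z^k\,\nu = \sum_{i,j} g_{ij}\, z^k\, z^i\, \dd z^j$, so the required identity is $g_{ij}\, R^{kj}\, R^{ki} = g_{ij}$ for each fixed pair $(i,j)$ with $g_{ij}\neq 0$; from the explicit matrices \eqref{eqn:Rmatrix} and \eqref{eqn:R4metric} one checks this on the four nonzero entries of $(g_{ij})$ (the pairs $(1,3),(2,4),(3,1),(4,2)$), which is the main concrete computation. Using \eqref{eqn:Rhermitian} and \eqref{eqn:RijRji} should make this transparent, as the relevant products of $R$-entries telescope to $1$.

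For the normalization, I would compute $g^{-1}(\nu\otimes_A\nu)$ directly. Writing $\nu=\sum_{i,j} g_{ij}\, z^i\, \dd z^j$, left $A$-linearity of $g^{-1}$ gives $g^{-1}(\nu\otimes_A \nu) = \sum g_{ij} g_{kl}\, z^i\, g^{-1}(\dd z^j\otimes_A \dd z^k)\, z^l$ — but one has to be careful: the second $z^i$ factor inside the tensor product has to be commuted to the left past $\dd z^j$ first, using \eqref{eqn:Rzdz}, which again produces $R$-factors that I expect to cancel using the centrality already established (equivalently, since $\nu$ is central, $\nu\otimes_A\nu$ is a central element and one can just evaluate). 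Then using $g^{-1}(\dd z^j\otimes_A\dd z^k)=g^{jk}$ from \eqref{eqn:R4invmetric} and the contraction identity \eqref{eqn:R4delta}, $\sum_j g_{ij} g^{jk}=\delta_i^k$, the whole expression collapses to $\sum_{i,l} g_{il}\, z^i\, z^l = 2f+1$, which is $\equiv 1$ in $B=A/(f)$. This yields \eqref{eqn:etanormalized}, and the hypotheses of Example \ref{ex:levelset} are then met, so $B$ is a noncommutative hypersurface by Definition \ref{def:hypersurface}.

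The formula \eqref{eqn:S3Pi} for $\Pi$ is then essentially immediate from the definition \eqref{eqn:Pi}: $\Pi(\omega)=\omega - g^{-1}(\omega\otimes_A\nu)\,\nu$, so $\Pi(\dd z^i)=\dd z^i - g^{-1}(\dd z^i\otimes_A\nu)\,\nu$, and one only needs $g^{-1}(\dd z^i\otimes_A\nu)=z^i$. This follows by writing $\nu=\sum_{j,k} g_{jk}\, z^j\, \dd z^k$, commuting the $z^j$ past $\dd z^i$ via \eqref{eqn:Rzdz} to the left (picking up $R^{ij}$), and then using $g^{-1}(\dd z^i\otimes_A\dd z^k)=g^{ik}$ together with $\sum_k g^{ik} g_{kj}=\delta^i_j$ — up to the same $R$-bookkeeping, which should again wash out. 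The main obstacle throughout is purely the combinatorial bookkeeping of the $R^{ij}$ factors when commuting coordinates past differentials; the conceptual content is light, and I expect that invoking centrality of $\nu$ (once proved) to declare $\nu\otimes_A\nu$ central lets one sidestep most of it. I would present the generator-level check for centrality in full and then state that the remaining identities follow "by a short calculation analogous to the proof of Lemma \ref{lem:R4metricpair}," citing \eqref{eqn:R4delta}, \eqref{eqn:Rhermitian} and \eqref{eqn:RijRji}.
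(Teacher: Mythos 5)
Your proposal is correct and follows essentially the same route as the paper: centrality is checked directly on the generators via the $R$-matrix relations, the normalization is the computation $g^{-1}(\nu\otimes_A\nu)=\sum_{i,k}g_{ik}\,z^i\,z^k = 2f+1\equiv 1$ in $B$, and the projector formula reduces to $g^{-1}(\dd z^i\otimes_A\nu)=z^i$ via the contraction $\sum_k g^{ik}g_{jk}=\delta^i_j$, exactly as in the paper's short calculation (which commutes $g_{jk}z^j$ past the differential using \eqref{eqn:Rzdz}). The only quibble is the factor picked up when moving $z^j$ past $\dd z^i$ is $R^{ji}$ rather than $R^{ij}$, which is immaterial here since the surviving terms have $j=i$ and $R^{ii}=1$.
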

\begin{proof}
Centrality of $\nu$ is a simple check using \eqref{eqn:Rzdz} and \eqref{eqn:Rmatrix}
and the normalization condition \eqref{eqn:etanormalized} is proven by
\begin{flalign}
g^{-1}(\nu\otimes_A\nu) = \sum_{i,j,k,l=1}^4  g_{ij}\, z^{i}\,g^{jl} \,g_{kl}\,z^k=
\sum_{i,k=1}^4  g_{ik}\, z^{i}\, z^k =1\quad.
\end{flalign}
The explicit expression for the projector is obtained from a short calculation
\begin{flalign}
\Pi(\dd z^i) 
=\dd z^i - g^{-1}\Big(\dd z^i\otimes_A \sum_{j,k=1}^4 g_{jk}\, z^{j}\, \dd z^{k}\Big)\,\nu 
=\dd z^i - \sum_{j,k=1}^4 g^{ik}\, g_{jk}\, z^{j}\, \nu 
= \dd z^i - z^i\,\nu\quad,
\end{flalign}
where in the second step we used  \eqref{eqn:Rzdz}, \eqref{eqn:Rmatrix} and \eqref{eqn:R4invmetric}
in order to write  $\sum_{j,k=1}^4 g_{jk}\, z^{j}\, \dd z^{k} = \sum_{j,k=1}^4 \dd z^{k}\,  g_{jk}\, z^{j}$.
\end{proof}

\begin{propo}
Assumptions \ref{assu:dftransparent} and \ref{assu:Pitransparent} hold true. 
The induced Riemannian structure from Proposition \ref{prop:RiemannianStructure} 
reads explicitly as
\begin{subequations}
\begin{flalign}
	 g_{B} \,=\,\sum_{i,j=1}^4 g_{ij}\, \dd z^{i} \otimes_{B} \dd z^{j} \,\in\,\Omega^1_B\otimes_B\Omega^1_B\quad,\label{eqn:S3metric}
\end{flalign}
\begin{flalign}
	g_{B}^{-1}\big(\dd z^{i} \otimes_{B} \dd z^{j}\big) \,=\, g^{ij} - z^{i}\, z^{j}\quad, \label{eqn:S3invmetric}
\end{flalign}
\begin{flalign}
	\nabla_{B} (\dd z^{i})  \,=\, -z^i \,\sum_{k,l=1}^4 g_{kl}\,\dd z^k\otimes_B \dd z^l\quad, \label{eqn:S3connection}
\end{flalign}
\begin{flalign}
\sigma_B\big(\dd z^i\otimes_B\dd z^j\big)\,=\, R^{ji}\,\dd z^j\otimes_B \dd z^i\quad. \label{eqn:S3flip}
\end{flalign}
\end{subequations}
\end{propo}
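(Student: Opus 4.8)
The argument splits into verifying that this example satisfies Assumptions \ref{assu:dftransparent} and \ref{assu:Pitransparent}, and then reading off the four displayed formulas from the general constructions in Section \ref{subsec:riem}. To check Assumption \ref{assu:dftransparent}, the plan is to evaluate the two identities in \eqref{eqn:dftransparent} on the basis $\{\dd z^k\}$ of $\Omega^1_A$. Writing $\nu=\sum_{i,j}g_{ij}\,z^i\,\dd z^j$ and using left $A$-linearity of $\sigma$, centrality of $\nu$ and the commutation relations \eqref{eqn:Rzdz}, the identity $\sigma(\dd z^k\otimes_A\nu)=\nu\otimes_A\dd z^k$ reduces to $R^{ik}\,R^{jk}=1$ and the identity $\sigma(\nu\otimes_A\dd z^k)=\dd z^k\otimes_A\nu$ reduces to $R^{ik}=R^{kj}$, in each case for all $k$ and all pairs $(i,j)$ with $g_{ij}\neq 0$. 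Since the nonzero entries of $(g_{ij})$ sit exactly at the ``conjugate index'' pairs $(1,3),(2,4),(3,1),(4,2)$, both sets of identities follow directly from the explicit matrix \eqref{eqn:Rmatrix} together with \eqref{eqn:Rhermitian} and \eqref{eqn:RijRji}; alternatively, one verifies the first identity in \eqref{eqn:dftransparent} and deduces the second from symmetry of $g^{-1}$ and invertibility of $\sigma$.

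For Assumption \ref{assu:Pitransparent} I would work on the basis $\dd z^i\otimes_B\dd z^j$ of $q_!(\Omega^1_A)\otimes_B q_!(\Omega^1_A)$, insert the explicit projector $\Pi(\dd z^i)=\dd z^i-z^i\,\nu$ from \eqref{eqn:S3Pi}, and expand both sides of each of the two diagrams. Using \eqref{eqn:R4flip}, the relations \eqref{eqn:Rzdz} (together with $R^{ij}R^{ji}=1$ to rearrange coordinates), centrality of $\nu$, and Assumption \ref{assu:dftransparent} to commute $\nu$ through $\sigma$, both sides of each diagram reduce to $R^{ji}\,\dd z^j\otimes_B\dd z^i$ minus a single matching ``$\nu$-term''. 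This is the most calculation-heavy of the verification steps, but it is entirely mechanical; I regard it as the main (purely bookkeeping) obstacle.

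Granting the assumptions, Proposition \ref{prop:RiemannianStructure} already guarantees that $(g_B,(\nabla_B,\sigma_B))$ is a Riemannian structure on $(\Omega^1_B,\dd)$, so only the four explicit expressions remain. Formula \eqref{eqn:S3metric} is the image of the central element $g=\sum_{i,j}g_{ij}\,\dd z^i\otimes_A\dd z^j$ under the quotient map $q_!(\Omega^1_A)\otimes_B q_!(\Omega^1_A)\twoheadrightarrow\Omega^1_B\otimes_B\Omega^1_B$, and \eqref{eqn:S3flip} is immediate from \eqref{eqn:sigmaB} and \eqref{eqn:R4flip}. For \eqref{eqn:S3connection} I would substitute into the Gauss-type formula \eqref{eqn:nablaBexplicit}: since $\nabla(\dd z^i)=0$ by \eqref{eqn:R4StandConn}, the Leibniz rule gives $\nabla(\nu)=\sum_{k,l}g_{kl}\,\dd z^k\otimes_A\dd z^l$, and together with the auxiliary identity $g^{-1}(\dd z^i\otimes_A\nu)=z^i$ this produces the stated expression. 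This auxiliary identity, which I expect to be used repeatedly, follows from \eqref{eqn:Rzdz}, \eqref{eqn:R4invmetric} and \eqref{eqn:R4delta} (using symmetry of $(g_{ij})$ and $R^{ii}=1$), and by Lemma \ref{lem:dftransparent}(i) it also gives $g^{-1}(\nu\otimes_A\dd z^i)=z^i$.

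Finally, for \eqref{eqn:S3invmetric} I would unfold $g_B^{-1}(\dd z^i\otimes_B\dd z^j)=g^{-1}\big(\Pi(\dd z^i)\otimes_A\Pi(\dd z^j)\big)$, insert $\Pi(\dd z^i)=\dd z^i-z^i\,\nu$, and expand into four terms. The term without $\nu$ gives $g^{ij}$; and using the two auxiliary identities above, the normalization \eqref{eqn:etanormalized}, and centrality of $\nu$, each of the three remaining terms evaluates (after rearranging with $R^{ij}R^{ji}=1$) to $z^i z^j$, so together they contribute $-z^i z^j$, giving \eqref{eqn:S3invmetric}. I do not anticipate a conceptual obstacle anywhere: every map in sight is defined explicitly on generators, and the only real hazard is keeping careful track of the direction of the $R$-factors whenever a coordinate is commuted past a differential.
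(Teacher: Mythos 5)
Your proposal is correct and follows essentially the same route as the paper: both assumptions are verified by direct computation on the generators $\dd z^i$ using \eqref{eqn:Rzdz}, \eqref{eqn:R4flip} and the explicit $R$-matrix (with the projector $\Pi(\dd z^i)=\dd z^i - z^i\,\nu$ and the identity $g^{-1}(\dd z^i\otimes_A\nu)=z^i$ doing the work), and the four formulas are then read off from \eqref{eqn:metricB}, \eqref{eqn:inversemetricB}, \eqref{eqn:nablaBexplicit} with $\nabla(\nu)=\sum_{k,l}g_{kl}\,\dd z^k\otimes_A\dd z^l$, and \eqref{eqn:sigmaB}, exactly as in the paper. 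One minor caveat: your parenthetical claim that the second identity in \eqref{eqn:dftransparent} follows from the first via symmetry of $g^{-1}$ and invertibility of $\sigma$ is not a valid general implication (what actually makes it work here is that $\sigma^2=\id$ for this example), but since you verify both identities directly anyway, this does not affect the argument.
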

\begin{proof}
Verifying Assumption \ref{assu:dftransparent} is a simple check using \eqref{eqn:Rzdz}, \eqref{eqn:R4flip} and \eqref{eqn:Rmatrix}.
To prove commutativity of the top diagram in Assumption \ref{assu:Pitransparent}, we use \eqref{eqn:S3Pi} and compute
\begin{flalign}
\nn	\sigma\big(\Pi(\dd z^{i}) \otimes_{A} \dd z^{j}\big) 
	&= \sigma\big(\dd z^{i} \otimes_{A} \dd z^{j} \big) - \sigma\big( z^{i}\, \nu \otimes_{A} \dd z^{j}\big)\\
\nn	&= R^{ji}\, \dd z^{j} \otimes_{A} \dd z^{i} -  R^{ji}\, \dd z^{j} \otimes_A z^{i}\, \nu\\
\nn	&= R^{ji}\, \dd z^{j} \otimes_{A} \Pi(\dd z^{i})\\
	&= (\id \otimes_{A} \Pi) \, \sigma\big(\dd z^{i} \otimes_{A} \dd z^{j}\big)\quad,
\end{flalign}
where in the second step we used also \eqref{eqn:dftransparent} and \eqref{eqn:Rzdz}. 
Commutativity of the bottom diagram in Assumption \ref{assu:Pitransparent} is proven by a similar calculation.
\sk

Concerning the explicit expressions for the induced Riemannian structure, 
we observe that \eqref{eqn:S3metric} follows trivially from \eqref{eqn:metricB}.
Equation \eqref{eqn:S3invmetric} follows from \eqref{eqn:inversemetricB},
\eqref{eqn:S3Pi} and a straightforward calculation. Equation \eqref{eqn:S3connection} 
follows from \eqref{eqn:nablaBexplicit} and \eqref{eqn:R4StandConn} by a short calculation
\begin{flalign}
\nabla_B (\dd z^i)= \nabla (\dd z^i )-g^{-1}\big(\dd z^i \otimes_A \nu \big)\,\nabla (\nu)
= -z^i\,\nabla(\nu) = -z^i\,\sum_{k,l=1}^4 g_{kl}\, \dd z^k\otimes_B \dd z^l \quad,
\end{flalign}
where in the last step we used that
\begin{flalign}\label{eqn:S3nablaeta}
	\nabla (\nu) = \sum_{k,l=1}^4 g_{kl} \, \nabla \big(z^k\,\dd z^l\big)
	= \sum_{k,l=1}^4 g_{kl}\, \dd z^k\otimes_A \dd z^l 
\end{flalign}
via the left Leibniz rule and \eqref{eqn:R4StandConn}. Finally, \eqref{eqn:S3flip} 
follows trivially from \eqref{eqn:sigmaB} and \eqref{eqn:R4flip}.
\end{proof}

\begin{propo}
Assumption \ref{assu:nabladdfcentral} holds true. The induced spinorial structure 
from Proposition \ref{prop:SpinStr} reads explicitly as
\begin{subequations}
\begin{flalign}\label{eqn:S3spinor}
	\EEE_{B} \,=\, \frac{\EEE}{f\, \EEE}\quad,
\end{flalign}
\begin{flalign}\label{eqn:S3gamma}
	\gamma_{B}\big(\dd z^{i} \otimes_{B} e_{\alpha}\big) 
	\,=\, - \Big(\sum_{k,l=1}^4 g_{kl}\, z^{k}\, \gamma_{\theta}^{l}\, \gamma_{\theta}^{i} + z^{i} \Big) \,e_{\alpha}\quad,
\end{flalign}
\begin{flalign}\label{eqn:S3SpinConn}
	\nabla^{\sp}_{B} (e_{\alpha}) 
	\,=\, \frac{1}{2}\, \sum_{i,j,k,l=1}^4 g_{ij}\, g_{kl}\, z^{k}\, \dd z^{i} \otimes_{B} \gamma_{\theta}^{j}\, \gamma_{\theta}^{l}\, e_{\alpha}\quad.
\end{flalign}
\end{subequations}
\end{propo}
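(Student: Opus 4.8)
The plan is to verify the three ingredients in turn: Assumption \ref{assu:nabladdfcentral}, then formula \eqref{eqn:S3gamma}, then formula \eqref{eqn:S3SpinConn}. For Assumption \ref{assu:nabladdfcentral}, I would first record from \eqref{eqn:S3nablaeta} that $\nabla(\nu)=\sum_{k,l}g_{kl}\,\dd z^k\otimes_A\dd z^l$. Since $\sigma$ acts on the basis by \eqref{eqn:R4flip}, a direct computation of $\sigma_{23}\sigma_{12}\big(\Pi(\dd z^i)\otimes_A\nabla(\nu)\big)$ amounts to moving $\Pi(\dd z^i)$ past the two factors of $\nabla(\nu)$, each move producing a factor of the matrix $R$. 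Using the identity $R^{ki}R^{li}g_{kl}=g_{kl}$ for the nonzero entries of $(g_{kl})$ — which follows from \eqref{eqn:Rmatrix} and the fact that $g_{kl}\neq 0$ forces $\{k,l\}=\{1,3\}$ or $\{2,4\}$, for which the relevant $R$-entries multiply to $1$ — one finds the two $R$-factors cancel and the result equals $\nabla(\nu)\otimes_A\Pi(\dd z^i)$. The centrality of $\nabla(\nu)$, already established in \eqref{eqn:nabladdfcentral}, then lets one pass to equivalence classes, giving the Assumption.

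Next I would derive the explicit Clifford multiplication \eqref{eqn:S3gamma}. Starting from the defining formula \eqref{eqn:gammaB}, $\gamma_B(\dd z^i\otimes_B e_\alpha)=\big[\gamma_{[2]}\big(\Pi(\dd z^i)\otimes_A\nu\otimes_A e_\alpha\big)\big]$, I substitute $\Pi(\dd z^i)=\dd z^i-z^i\,\nu$ from \eqref{eqn:S3Pi} and $\nu=\sum_{k,l}g_{kl}\,z^k\,\dd z^l$ from \eqref{eqn:S3eta}. Expanding $\gamma_{[2]}$ via \eqref{eqn:R4cliffmult} produces the term $\sum_{k,l}g_{kl}\,z^k\,\gamma_\theta^l\,\gamma_\theta^i\,e_\alpha$ from the first summand and, from the second summand, $-z^i\,\gamma_{[2]}(\nu\otimes_A\nu\otimes_A e_\alpha)=-z^i\,\gamma_{[2]}\big(\sum g_{kl}z^k\dd z^l\otimes_A\sum g_{mn}z^m\dd z^n\otimes_A e_\alpha\big)$; using Lemma \ref{lem:R4rcomm}(iii) together with the normalization $g^{-1}(\nu\otimes_A\nu)=1$ this collapses to $+z^i\,e_\alpha$, yielding the stated formula after the overall sign. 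The induced spin connection \eqref{eqn:S3SpinConn} is read off from \eqref{eqn:nablaspB}: since $\nabla^\sp(e_\alpha)=0$ by \eqref{eqn:R4spinconn}, only the curvature term survives, $\nabla^\sp_B(e_\alpha)=\tfrac12(\id\otimes_A\gamma_{[2]})\big(\nabla(\nu)\otimes_A\nu\otimes_A e_\alpha\big)$, and inserting $\nabla(\nu)=\sum g_{ij}\dd z^i\otimes_A\dd z^j$ and $\nu=\sum g_{kl}z^k\dd z^l$ and expanding $\gamma_{[2]}$ via \eqref{eqn:R4cliffmult} gives exactly \eqref{eqn:S3SpinConn}.

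Finally, for the spinor module, \eqref{eqn:S3spinor} is just the special case of \eqref{eqn:EEEB} with $I=(f)$, noting that $I\EEE=f\,\EEE$ because $f$ is central in $A$. Given Assumptions \ref{assu:dftransparent} and \ref{assu:Pitransparent} (already checked in the preceding Proposition) and Assumption \ref{assu:nabladdfcentral} (checked above), Proposition \ref{prop:SpinStr} then guarantees that $(\EEE_B,\nabla^\sp_B,\gamma_B)$ is a spinorial structure, so nothing further needs to be verified about the axioms themselves.

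I expect the main obstacle to be the verification of Assumption \ref{assu:nabladdfcentral}: it is the only genuinely nontrivial computation, because it requires tracking how the braiding $\sigma$ reorders the three basis one-forms and then observing the precise cancellation of $R$-factors against the nonzero pattern of $(g_{kl})$. The other two formulas are essentially bookkeeping with the Clifford relations of Lemma \ref{lem:R4rcomm} and the vanishing of the background connections, so I would present those succinctly.
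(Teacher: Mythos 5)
Most of your plan matches the paper's own proof: Assumption \ref{assu:nabladdfcentral} is indeed checked exactly as you describe (the paper only remarks that it is ``a similar calculation as centrality of $g$'', and your identity $R^{ki}R^{li}g_{kl}=g_{kl}$ together with centrality of $\nabla(\nu)$ is the substance of that check), \eqref{eqn:S3spinor} is just the definition, and \eqref{eqn:S3SpinConn} does follow by writing out \eqref{eqn:nablaspB} with $\nabla^{\sp}(e_\alpha)=0$ and \eqref{eqn:S3nablaeta} (though note that matching the stated form, with $z^k$ to the left of $\dd z^i$, silently uses the same $R$-factor cancellation).

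However, your derivation of \eqref{eqn:S3gamma} as written is not correct. By the definition \eqref{eqn:gamma2}, $\gamma_{[2]}$ applies $\gamma$ to the \emph{right} factor first, so the first tensor slot contributes the \emph{leftmost} gamma matrix, and the coefficient $z^k$ of $\nu$ must be commuted past $\dd z^i$ inside $\otimes_A$, producing $R^{ki}$. The direct expansion of your first summand is therefore
\begin{flalign}
\gamma_{[2]}\big(\dd z^i\otimes_A\nu\otimes_A e_\alpha\big)
\,=\,\sum_{k,l=1}^4 g_{kl}\,R^{ki}\,z^k\,\gamma_\theta^i\,\gamma_\theta^l\, e_\alpha
\,=\,-\Big(\sum_{k,l=1}^4 g_{kl}\,z^k\,\gamma_\theta^l\,\gamma_\theta^i+2\,z^i\Big)e_\alpha\quad,
\end{flalign}
where the second equality uses Lemma \ref{lem:R4rcomm}\,(iii), $R^{ki}R^{li}g_{kl}=g_{kl}$ and $\sum_l g_{kl}g^{il}=\delta_k^i$; it is \emph{not} $\sum g_{kl}z^k\gamma_\theta^l\gamma_\theta^i e_\alpha$ as you claim. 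Moreover \eqref{eqn:gammaB} carries no ``overall sign'': taken literally, your two summands add up to $+\big(\sum g_{kl}z^k\gamma_\theta^l\gamma_\theta^i+z^i\big)e_\alpha$, the negative of \eqref{eqn:S3gamma}. The paper avoids this trap by first invoking \eqref{eqn:cliffordetarels1} to write $\gamma_{[2]}(\Pi(\dd z^i)\otimes_A\nu\otimes_A e_\alpha)=-\gamma_{[2]}(\nu\otimes_A\Pi(\dd z^i)\otimes_A e_\alpha)$, which supplies the minus sign and places the coefficients of $\nu$ on the far left so that no $R$-factors or reordering of gammas arise; alternatively, your direct route works if you keep the $R$-factors and reorder with the deformed Clifford relation as above, in which case the extra $-2z^ie_\alpha$ combines with the $+z^ie_\alpha$ from the $\nu\otimes\nu$ term (which, as in \eqref{eqn:cliffordetarels2}, holds in the quotient using the sphere relation) to give exactly \eqref{eqn:S3gamma}.
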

\begin{proof}
Recalling \eqref{eqn:S3nablaeta}, Assumption \ref{assu:nabladdfcentral} is verified 
by a similar calculation as the one that proves centrality of the metric $g$.
\sk

Concerning the explicit expressions for the induced spinorial structure, 
we observe that \eqref{eqn:S3spinor} is just the definition in \eqref{eqn:EEEB}.
Equation \eqref{eqn:S3gamma} follows from \eqref{eqn:gammaB}
by a short calculation
\begin{flalign}
\nn \gamma_B\big(\dd z^i\otimes_B e_\alpha \big)
&=-\gamma_{[2]}\big(\nu\otimes_A\Pi(\dd z^i)\otimes_A e_\alpha\big)\\
\nn &= -\gamma_{[2]}\big(\nu\otimes_A \dd z^i\otimes_A e_\alpha\big)
+g^{-1}\big(\dd z^i\otimes_A \nu\big)\, \gamma_{[2]}\big(\nu\otimes_A\nu\otimes_A e_\alpha\big)\\
&= - \Big(\sum_{k,l=1}^4 g_{kl}\, z^{k}\, \gamma_{\theta}^{l}\, \gamma_{\theta}^{i} + z^{i} \Big) \,e_{\alpha}\quad,
\end{flalign}
where in the first step we used \eqref{eqn:cliffordetarels1}
and in the third step we used \eqref{eqn:cliffordetarels2}.
Finally, equation \eqref{eqn:S3SpinConn} follows from writing out \eqref{eqn:nablaspB}
and using  \eqref{eqn:R4spinconn} and \eqref{eqn:S3nablaeta}.
\end{proof}

We now have all the building blocks for computing the induced Dirac operator on $\bbS^{3}_{\theta}$.
\begin{propo}\label{prop:DiracS3}
The induced Dirac operator \eqref{eqn:induceddiracop} on $\bbS^{3}_{\theta}$ is given by
\begin{flalign}\label{eqn:DiracS3}
D_{B}(s) \,=\, -\frac{1}{2}\sum_{i,j=1}^4 [\gamma_{\theta}^{j},\gamma_{\theta}^{i}]_\theta^{}\, \partial_i s\, z_j - \frac{3}{2}\, s\quad,
\end{flalign}
where $z_{i} := \sum_{k=1}^4g_{ik}\, z^{k}$, $\partial_{i}s :=\sum_{\alpha=1}^4 \partial_{i}s^{\alpha}\, e_{\alpha}$
and $[\gamma_{\theta}^{j},\gamma_{\theta}^{i}]_\theta^{}$ is the $\theta$-commutator from Lemma \ref{lem:R4rcomm}.
\end{propo}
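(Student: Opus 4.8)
Here is the plan I would follow.

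The plan is to specialise the general formula \eqref{eqn:induceddiracopexplicit} of Proposition \ref{prop:induceddiracop} to the present data. The inputs are: the normalised $1$-form $\nu=\dd f=\sum_{i,j}g_{ij}\,z^i\,\dd z^j$ from \eqref{eqn:S3eta}; the flat spin connection, which by \eqref{eqn:R4spinconn} and the Leibniz rule reads $\nabla^{\sp}(s)=\sum_{\alpha}\dd s^\alpha\otimes_A e_\alpha$ with $\dd s^\alpha=\sum_i(\partial_i s^\alpha)\,\dd z^i$; the central element $\nabla(\nu)=\sum_{k,l}g_{kl}\,\dd z^k\otimes_A\dd z^l$ from \eqref{eqn:S3nablaeta}; and the projector $\Pi(\dd z^i)=\dd z^i-z^i\,\nu$ from \eqref{eqn:S3Pi}. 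After substitution the whole computation reduces to iterated evaluation of the Clifford map \eqref{eqn:R4cliffmult}, interleaved with the commutation relations \eqref{eqn:Rzdz} for moving coordinate functions past one-forms, together with Assumption \ref{assu:dftransparent}, Lemma \ref{lem:R4rcomm} and the identities \eqref{eqn:cliffordetarels}.

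I would dispose of the second (extrinsic curvature) term of \eqref{eqn:induceddiracopexplicit} first, since it produces the constant in \eqref{eqn:DiracS3}. Using $g^{-1}(\dd z^i\otimes_A\nu)=z^i$ (cf.\ \eqref{eqn:S3Pi}) and centrality of $\nu$ one finds $(\Pi\otimes_A\id)\nabla(\nu)=\nabla(\nu)-\nu\otimes_A\nu$. Since $\nabla(\nu)$ is central, pulling the spinor coefficients to the left gives $\gamma_{[2]}\big(\nabla(\nu)\otimes_A s\big)=\big(\sum_{k,l}g_{kl}\,\gamma^k_\theta\,\gamma^l_\theta\big)s$; symmetrising the $\gamma$-product, which is legitimate because $(g_{ij})$ is symmetric and $R^{lk}=1$ whenever $g_{kl}\neq 0$, and then invoking the $\theta$-anticommutator relation of Lemma \ref{lem:R4rcomm}(iii) collapses this to $-\big(\sum_{k,l}g_{kl}g^{kl}\big)s=-4\,s$. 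On the other hand $\gamma_{[2]}(\nu\otimes_A\nu\otimes_A s)=-s$ by \eqref{eqn:cliffordetarels2}. Hence the second term of \eqref{eqn:induceddiracopexplicit} equals $\tfrac12\big(\gamma_{[2]}(\nabla(\nu)\otimes_A s)-\gamma_{[2]}(\nu\otimes_A\nu\otimes_A s)\big)=\tfrac12(-4\,s+s)=-\tfrac32\,s$.

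For the first term $-\tfrac12\big(\gamma_{[2]}-\gamma_{[2]}(\sigma\otimes_A\id)\big)(\nu\otimes_A\nabla^{\sp}(s))$ I would again use centrality of $\nu$ to write $\nu\otimes_A\nabla^{\sp}(s)=\sum_{i,\alpha}(\partial_i s^\alpha)\,(\nu\otimes_A\dd z^i\otimes_A e_\alpha)$, and, since $\sigma$ is a bimodule map and $\sigma(\nu\otimes_A\omega)=\omega\otimes_A\nu$ by Assumption \ref{assu:dftransparent}, also $(\sigma\otimes_A\id)(\nu\otimes_A\nabla^{\sp}(s))=\sum_{i,\alpha}(\partial_i s^\alpha)\,(\dd z^i\otimes_A\nu\otimes_A e_\alpha)$. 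At every stage the vectors to which the Clifford map is applied have only scalar coefficients, so the only $R$-factor that can appear is the one produced when a coordinate function in $\nu=\sum_j z_j\,\dd z^j$, with $z_j:=\sum_k g_{kj}z^k$, is commuted past $\dd z^i$ via \eqref{eqn:Rzdz}. Evaluating $\gamma$ twice then yields $\gamma_{[2]}(\nu\otimes_A\dd z^i\otimes_A e_\alpha)=\sum_j z_j\,\gamma^j_\theta\gamma^i_\theta\,e_\alpha$ and $\gamma_{[2]}(\dd z^i\otimes_A\nu\otimes_A e_\alpha)=\sum_j R^{ij}\,z_j\,\gamma^i_\theta\gamma^j_\theta\,e_\alpha$, where (by the explicit form \eqref{eqn:Rmatrix}, equivalently \eqref{eqn:Rhermitian}) the constant $R^{ij}$ is precisely the one occurring in the $\theta$-commutator of Lemma \ref{lem:R4rcomm}. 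Taking the difference, using that the constant matrices $\gamma^i_\theta$ commute with left multiplication to collect the $z_j$ on the right, and recognising $\gamma^j_\theta\gamma^i_\theta-R^{ij}\gamma^i_\theta\gamma^j_\theta=[\gamma^j_\theta,\gamma^i_\theta]_\theta$, gives the first term $-\tfrac12\sum_{i,j}[\gamma^j_\theta,\gamma^i_\theta]_\theta\,\partial_i s\,z_j$. Adding the two contributions yields \eqref{eqn:DiracS3}.

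The computation has no conceptual obstacle; the point requiring care is the \emph{bookkeeping of sides}. One must keep the spinor coefficients on the left of the relevant $\dd z^i$ so that centrality of $\nu$ (and of $\nabla(\nu)$) can be used to move them out of the way; once this is done, all the Connes--Landi $R$-twists organise themselves into exactly the $R$-factors occurring in the $\theta$-commutator and $\theta$-anticommutator, the required cancellations being consequences of \eqref{eqn:Rhermitian} and \eqref{eqn:RijRji} together with the explicit form of $(g_{ij})$. One also uses repeatedly that the matrices $\gamma^i_\theta$ commute with left multiplication by elements of $B$ on $\EEE_B=B^4$, which is what legitimises writing the answer with the $z_j$ on the right. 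Since these manipulations are entirely of the routine kind already carried out in the proof of Proposition \ref{prop:SpinStr}, I would, as the paper does elsewhere, leave the remaining index-chasing to the reader.
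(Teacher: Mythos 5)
Your proposal is correct and follows essentially the same route as the paper: both specialise the general formula \eqref{eqn:induceddiracopexplicit} term by term, obtaining the first term exactly as in the paper and the constant $-\tfrac{3}{2}$ from the extrinsic-curvature term via the Clifford relations, the normalization of $\nu$ and the sphere relation. The only (cosmetic) difference is in the second term, where you split $(\Pi\otimes_A\id)\nabla(\nu)=\nabla(\nu)-\nu\otimes_A\nu$ and use \eqref{eqn:cliffordetarels2} together with Lemma \ref{lem:R4rcomm}~(iii), whereas the paper symmetrises the projected element $\sum_{i,j}(g_{ij}-z_j z_i)\,\dd z^i\otimes_A\dd z^j$ under $\sigma$ and applies the Clifford relation \eqref{eqn:cliffordrels} directly.
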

\begin{proof}
We have to compute the induced Dirac operator from Proposition \ref{prop:induceddiracop} for our example.
Using \eqref{eqn:R4StandConn} and \eqref{eqn:S3eta}, the first term of \eqref{eqn:induceddiracopexplicit} is given by
\begin{flalign}
\text{\eqref{eqn:induceddiracopexplicit}}{}^{\mathrm{1st}}  = 
-\frac{1}{2}\sum_{\alpha=1}^4\sum_{i,j,k=1}^4 \partial_i s^\alpha\, g_{kj} \,z^k\, \big(\gamma_{\theta}^{j}\,\gamma_{\theta}^{i} - R^{ij} \,
\gamma_{\theta}^{i}\,\gamma_{\theta}^{j}\big)\,e_\alpha
=-\frac{1}{2}\sum_{i,j=1}^4 [\gamma_{\theta}^{j},\gamma_{\theta}^{i}]_\theta^{}\, \partial_i s\, z_j\quad,
\end{flalign}
which yields the first term of \eqref{eqn:DiracS3}. To compute the second term of \eqref{eqn:induceddiracopexplicit},
we first observe that 
\begin{flalign}
(\Pi\otimes_A \id)\nabla(\nu) =\sum_{i,j=1}^4 g_{ij}\,\Pi\big(\dd z^i\big) \otimes_A\dd z^j 
= \sum_{i,j=1}^4 \big(g_{ij}-z_j\,z_i\big)\,\dd z^i\otimes_A\dd z^j\quad,
\end{flalign}
where in the first step we used \eqref{eqn:S3nablaeta} and in the second step \eqref{eqn:S3Pi}.
This element is invariant under applying $\sigma$, i.e.\ $\sigma (\Pi\otimes_A \id)\nabla(\nu) =
(\Pi\otimes_A \id)\nabla(\nu) $, hence we can write
\begin{flalign}
(\Pi\otimes_A \id)\nabla(\nu) = \frac{1}{2}\Big((\Pi\otimes_A \id)\nabla(\nu) + \sigma (\Pi\otimes_A \id)\nabla(\nu)\Big)
\end{flalign}
in the  second term of \eqref{eqn:induceddiracopexplicit}. Using the Clifford relations \eqref{eqn:cliffordrels},
we obtain
\begin{flalign}
\text{\eqref{eqn:induceddiracopexplicit}}{}^{\mathrm{2nd}}  = -\frac{1}{2}\sum_{i,j=1}^4 \big(g_{ij}-z_j\,z_i\big)\, g^{ij}\,s
= -\frac{1}{2}\big(4-1\big)\,s = -\frac{3}{2}\,s\quad,
\end{flalign}
where in the second step we used $\sum_{i,j=1}^4 g_{ij}\,g^{ij} = \sum_{i=1}^4\delta_i^{i} =4$
(cf.\ \eqref{eqn:R4delta}) and the sphere relation $\sum_{i,j=1}^4 z_j\, z_i\, g^{ij} = \sum_{i,j=1}^4 g_{ij}\,z^i\,z^j =1$
(cf.\ \eqref{eqn:S3f}).
\end{proof}

\begin{rem}
For vanishing deformation parameter $\theta=0$, 
our Dirac operator \eqref{eqn:DiracS3} on $\bbS^3_\theta$
reduces to the usual Dirac operator on the commutative $3$-sphere 
$\bbS^3\subseteq \bbR^4$, see e.g.\ \cite[Section 7.1]{Trautman} or \cite{Trautman93}.
\end{rem}

We shall now compare our noncommutative hypersurface Dirac operator \eqref{eqn:DiracS3} on $\bbS^3_\theta$
to the Connes-Landi Dirac operator \cite{CL,CDV} that is obtained
from an isospectral deformation \cite{Brain}. This requires some preliminaries
on Hopf algebras, their coactions and $2$-cocycle deformations, for which we follow
the notations and conventions of \cite{BSSmapping}. Let $H = \mathcal{O}(\bbT^2)$
denote the Hopf algebra of functions on the $2$-torus. A vector space basis
for $H$ is given by $\{t_{(n_1,n_2)}\, :\, (n_1,n_2)\in\bbZ^2\}$, where $t_{(n_1,n_2)}$
denotes the exponential function $e^{i\, (n_1 \,\phi_1 + n_2\,\phi_2)}$ 
with momentum $(n_1,n_2)$. Consider now the left $H$-coaction
$\rho : A_{\bbR^4}\to H\otimes A_{\bbR^4}$ of the torus Hopf algebra on 
the {\em commutative} algebra of functions on $\bbR^4$ that is 
given in complex coordinates by
\begin{flalign}
\rho(z^1) = t_{(2,0)}\otimes z^1 ~,~~\rho(z^2) = t_{(0,2)}\otimes z^2
~,~~\rho(z^3) = t_{(-2,0)}\otimes z^3~,~~\rho(z^4) = t_{(0,-2)}\otimes z^4\quad.
\end{flalign}
When expressed in terms of the Cartesian coordinates $x^\mu$, it is easy to see that this describes (double covers of) 
rotations in the $(x^1,x^2)$-plane and rotations in the $(x^3,x^4)$-plane. 
The noncommutative algebra $A_{\bbR^4_\theta}$ 
given in \eqref{eqn:R4algebra} can be obtained as a deformation quantization
of the commutative algebra $A_{\bbR^4}$ by introducing the star-product
\begin{flalign}
a\star_\theta a^\prime \,:= \,\sigma_{\theta}\big(a_{(-1)}^{}\otimes a^\prime_{(-1)}\big)~a_{(0)}^{}\,a^\prime_{(0)}\quad,
\end{flalign}
where we used the standard Sweedler notation $\rho (a) = a_{(-1)}\otimes a_{(0)}$ for left coactions.
The $2$-cocycle $\sigma_{\theta} : H\otimes H \to \bbC$ is defined by
\begin{flalign}
\sigma_{\theta}\big(t_{(n_1,n_2)}\otimes t_{(m_1,m_2)}\big)\, :=\, \exp\Big(\tfrac{i \theta}{4} \big(n_1\,m_2 - n_2\,m_1\big)\Big)\quad.
\end{flalign}
Similarly, the algebra $B = B_{\bbS^3_\theta} $ of the Connes-Landi sphere \eqref{eqn:S3algebra}
can be obtained as a deformation quantization of the algebra $B_{\bbS^3}$ of the commutative $3$-sphere.
\sk

One can also obtain the module of noncommutative spinors $\EEE$ on $\bbR^4_\theta$ 
in \eqref{eqn:R4spinor} as a deformation quantization of
the module of commutative spinors by introducing the left $H$-coaction
\begin{flalign}
\rho(e_1) = t_{(1,1)}\otimes e_1~,~~
\rho(e_2) = t_{(-1,-1)}\otimes e_2~,~~
\rho(e_3) = t_{(1,-1)}\otimes e_3~,~~
\rho(e_4) = t_{(-1,1)}\otimes e_4
\end{flalign}
and the associated star-module structure
\begin{flalign}
a \star_\theta s \, :=\, \sigma_{\theta}\big(a_{(-1)}^{}\otimes s_{(-1)}\big)~a_{(0)}^{}\,s_{(0)}\quad.
\end{flalign}
The same is true for the spinor module $\EEE_{B}$ on $\bbS^3_\theta$ given in \eqref{eqn:S3spinor}.
When expressed in terms of these star-products, our Dirac operator \eqref{eqn:DiracS3} on $\bbS^3_\theta$
reads as
\begin{flalign}\label{eqn:DiracS3star}
D_{B} (s) \,=\, -\frac{1}{2}\sum_{i,j=1}^4 [\gamma_{\theta}^{j},\gamma_{\theta}^{i}]_\theta^{}\, \partial^\theta_i s \star_\theta z_j 
- \frac{3}{2}\, s\quad,
\end{flalign}
where $\partial^\theta_i $ is defined by $\dd a = \partial_i^\theta a \star_\theta \dd z^i$
with respect to the deformed module structure.
\sk

The Connes-Landi Dirac operator $D_{\mathrm{CL}}$ on $\bbS^3_\theta$ is given by regarding
the classical Dirac operator on $\bbS^3$ as an operator on the 
deformed spinor module,  see \cite{CL,CDV,Brain} for details. 
Concretely, it is given by setting the deformation parameter $\theta=0$ in \eqref{eqn:DiracS3star}, i.e.\
\begin{flalign}\label{eqn:DCL}
D_{\mathrm{CL}} (s) \,=\, -\frac{1}{2}\sum_{i,j=1}^4 [\gamma^{j},\gamma^{i}]\, \partial_i s \, z_j 
- \frac{3}{2}\, s\quad,
\end{flalign}
where $\partial_i $ is defined by $\dd a = \partial_i a \, \dd z^i$ with respect to the undeformed
module structure.
Because $D_{\mathrm{CL}}$ is equivariant under the torus action,
it satisfies the following property
\begin{flalign}\label{eqn:DCLproperty}
D_{\mathrm{CL}}(a\star_\theta s)\,=\, a\star_\theta D_{\mathrm{CL}} (s) +  
\gamma_{\bbS^3}\big(\dd a \otimes_{B_{\bbS^3_\theta}} s \big)\quad,
\end{flalign}
where $\omega  \otimes_{B_{\bbS^3_\theta}} s := 
\sigma_{\theta} (\omega_{(-1)}\otimes s_{(-1)} )~ \omega_{(0)} \otimes_{B_{\bbS^3}} s_{(0)} $
denotes the deformed tensor product and $\gamma_{\bbS^3}$ the classical Clifford multiplication.
Observe that this is precisely the same property that our hypersurface Dirac
operator $D_B$ satisfies by Proposition \ref{prop:Diracproperty}. This is because,
in the present context of deformation quantization,
our noncommutative Clifford multiplication \eqref{eqn:R4cliffmult} coincides by construction
with the classical Clifford multiplication regarded as a map on the deformed modules.
(The same is true for the induced Clifford multiplication \eqref{eqn:gammaB}
on the noncommutative hypersurface
$\bbS^3_\theta$ because the normalized form $\nu$ in \eqref{eqn:S3eta} is invariant under the torus action.)
With these preparations, we can now prove the following comparison result.
\begin{propo}\label{propo:DiraccomparisonS3}
The hypersurface Dirac operator \eqref{eqn:DiracS3star} on $\bbS^3_\theta$
coincides with the  Connes-Landi Dirac operator $D_{\mathrm{CL}}$. 
\end{propo}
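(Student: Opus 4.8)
The plan is to sidestep a direct phase-by-phase comparison of \eqref{eqn:DiracS3star} with \eqref{eqn:DCL} and argue structurally instead. Both operators act on the \emph{same} left module: by \eqref{eqn:R4spinor} and \eqref{eqn:EEEB} the abstract module $\EEE_B = \EEE/f\EEE$ is free over $B = B_{\bbS^3_\theta}$ with basis $\{e_\alpha\}$, and under cocycle quantization it is identified with the deformed spinor module $B_{\bbS^3}^4$ on which $D_{\mathrm{CL}}$ is defined (see \eqref{eqn:S3spinor}). The first step is to record that $D_B$ and $D_{\mathrm{CL}}$ satisfy \emph{the same} twisted Leibniz rule with respect to $\star_\theta$. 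For $D_{\mathrm{CL}}$ this is \eqref{eqn:DCLproperty}. For $D_B$ it is Proposition \ref{prop:Diracproperty} read through the deformation-quantization dictionary, using that the induced Clifford multiplication $\gamma_B$ of \eqref{eqn:gammaB} is the classical Clifford multiplication $\gamma_{\bbS^3}$ regarded on the deformed module --- which follows from $\gamma$ in \eqref{eqn:R4cliffmult} being the classical Clifford multiplication on the deformed module (by the choice \eqref{eqn:R4NCgammamat} of deformed gamma matrices) together with torus-invariance of $\nu$ in \eqref{eqn:S3eta}, both already noted above.

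Granting this, set $\Delta := D_B - D_{\mathrm{CL}} : \EEE_B \to \EEE_B$. Subtracting the two twisted Leibniz rules, the inhomogeneous Clifford terms cancel and one gets $\Delta(a\star_\theta s) = a\star_\theta \Delta(s)$ for all $a\in B$ and $s\in\EEE_B$; that is, $\Delta$ is a left $B$-module endomorphism of the free module $\EEE_B$. It is therefore determined by its values on the basis, so it suffices to evaluate both Dirac operators on each $e_\alpha$. Since the components of $e_\alpha$ are constants, $\partial^\theta_i e_\alpha = 0 = \partial_i e_\alpha$, so the derivative terms in \eqref{eqn:DiracS3star} and \eqref{eqn:DCL} both vanish and $D_B(e_\alpha) = -\tfrac{3}{2}\,e_\alpha = D_{\mathrm{CL}}(e_\alpha)$. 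Hence $\Delta(e_\alpha) = 0$ for every $\alpha$, so $\Delta = 0$ and the two operators coincide.

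The one genuinely delicate point is the identification invoked in the first step: one must verify carefully that the inhomogeneous term $\gamma_{\bbS^3}(\dd a \otimes_{B_{\bbS^3_\theta}} s)$ in \eqref{eqn:DCLproperty} is literally the same map as $\gamma_B(\dd_B a \otimes_B s)$ from Proposition \ref{prop:Diracproperty} once all objects are expressed via the $2$-cocycle $\sigma_\theta$. Concretely this amounts to matching $\sigma_\theta$ against the commutation relations of $A_{\bbR^4_\theta}$, the bimodule relations of $\Omega^1_A$ and the deformed gamma matrices \eqref{eqn:R4NCgammamat}; once this is in place, the argument above is immediate. As an alternative one could prove the claim by brute force --- expand $s = \sum_\alpha s^\alpha \star_\theta e_\alpha$ in \eqref{eqn:DiracS3star}, commute all $\star_\theta$'s and the $\theta$-commutator past one another, and check that the accumulated exponential phases cancel against those hidden in $\gamma_\theta^i$ --- but this is purely mechanical and far less transparent than the module-theoretic argument.
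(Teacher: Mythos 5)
Your argument is correct and is essentially the paper's own proof: the paper likewise observes that $D_B$ and $D_{\mathrm{CL}}$ obey the same twisted Leibniz rule \eqref{eqn:DCLproperty} (using that the deformed Clifford multiplication is the classical one on the deformed module, thanks to the cocycle construction of \eqref{eqn:R4NCgammamat} and torus-invariance of $\nu$), so that their difference is a left $B$-linear endomorphism of the free module $\EEE_B$, and then concludes from $D_B(e_\alpha)=-\tfrac{3}{2}e_\alpha=D_{\mathrm{CL}}(e_\alpha)$ on the basis spinors. Your explicit introduction of $\Delta = D_B - D_{\mathrm{CL}}$ and the flagged verification of the matching of the inhomogeneous Clifford terms only spell out steps the paper treats in the surrounding discussion.
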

\begin{proof}
Because both $D_B$ and $D_{\mathrm{CL}}$ satisfy the same property \eqref{eqn:DCLproperty},
they coincide if and only if $D_B(e_\alpha) = D_{\mathrm{CL}}(e_\alpha)$, for all basis spinors $e_\alpha$.
The latter follows from \eqref{eqn:DiracS3star} and \eqref{eqn:DCL}
because $D_B(e_\alpha) = -\tfrac{3}{2} e_\alpha = D_{\mathrm{CL}}(e_\alpha)$.
\end{proof}

\subsection{\label{subsec:torus}Noncommutative hypersurface $\bbT^2_\theta \hookrightarrow \bbS^3_\theta$}
In this section we apply our construction from Section \ref{sec:construction}
to induce the differential, Riemannian and spinorial structure
on $\bbS^3_\theta$ (cf.\ Section \ref{subsec:sphere}) to the noncommutative $2$-torus
$\bbT^2_\theta \hookrightarrow \bbS^3_\theta$.
In analogy to Section  \ref{subsec:sphere}, this amounts to verifying that this example is a noncommutative hypersurface 
in the sense of Definition \ref{def:hypersurface} and that the Assumptions 
\ref{assu:dftransparent}, \ref{assu:Pitransparent} and \ref{assu:nabladdfcentral} 
for our general construction hold true. 
We shall also provide
explicit expressions for these induced structures and in particular for
the induced Dirac operator. We will again suppress
in what follows the square brackets denoting equivalence classes in order to simplify our notations.
\sk

Consider the quotient
\begin{flalign}\label{eqn:T2algebra}
C\,:=\, B\big/(\widetilde{f})
\end{flalign}
of the algebra $B=B_{\bbS^3_{\theta}}$ of $\bbS^3_\theta$ (cf.\ \eqref{eqn:S3algebra})
by the ideal generated by
\begin{flalign}\label{eqn:T2f}
\widetilde{f}\,:=\, \frac{1}{2}\Big( \sum_{i,j=1}^4 h_{ij} \,z^i\,z^j  \Big)\,=\,\frac{1}{2}\Big( z^1\,\overline{z^1} - z^2\,\overline{z^2}\Big)\quad,
\end{flalign}
where $h_{ij}$ are the entries of the matrix
\begin{flalign}\label{eqn:hmatrix}
(h_{ij})\,:=\,\frac{1}{2}\begin{pmatrix}
0&0&1&0\\
0&0&0&-1\\
1&0&0&0\\
0&-1&0&0
\end{pmatrix}\quad.
\end{flalign}
From the commutation relations given by \eqref{eqn:R4algebra} and \eqref{eqn:Rmatrix},
one checks that $\widetilde{f}\in\mathcal{Z}(B)\subseteq B$ is central. 
We denote the quotient map
\begin{flalign}
\widetilde{q}\,:\, B~\longrightarrow~ C
\end{flalign}
by a tilde in order to distinguish it from the quotient map $q : A\to B$ in Section \ref{subsec:sphere}.
To recognize $C = C_{\bbT^2_\theta}$ as the algebra of the noncommutative $2$-torus $\bbT^2_\theta$, 
let us recall from \eqref{eqn:S3algebra} that $B=A/(f)$, hence $C = A/(f,\widetilde{f})$
is the quotient of the algebra $A=A_{\bbR^4_\theta}$ of $\bbR^4_\theta$ by
the ideal generated by the two relations $f$ and $\widetilde{f}$ in \eqref{eqn:S3f} and \eqref{eqn:T2f}.
The usual torus relations for the rescaled coordinates
$u:=\sqrt{2}\,z^1$ and $v:=\sqrt{2}\,z^2$ are then obtained from the linear combinations
\begin{flalign}\label{eqn:lincombinationrelations}
2\,(f +\widetilde{f}) \,=\, 2\,z^1\,\overline{z^1} -1=u\,\overline{u}-1 \quad,\quad
2\,(f -\widetilde{f}) \,=\, 2\,z^2\,\overline{z^2} -1 = v\,\overline{v}-1\quad.
\end{flalign}
\begin{propo}\label{prop:T2etaPi}
The $1$-form
\begin{flalign}\label{eqn:T2eta}
\widetilde{\nu} \,:=\, \dd \widetilde{f} \,=\, \sum_{i,j=1}^4 h_{ij}\,z^i\,\dd z^j \,\in\, \Omega^1_B
\end{flalign}
is central and normalized. Hence, by Example \ref{ex:levelset},
$C=C_{\bbT^2_\theta}$ is a noncommutative hypersurface 
of $B=B_{\bbS^3_\theta}$  in the sense of Definition \ref{def:hypersurface}.
The projector $\widetilde{\Pi} : \widetilde{q}_!(\Omega^1_B)\to \widetilde{q}_!(\Omega^1_B)$ 
from Proposition \ref{prop:Piproperties} reads explicitly as
\begin{flalign}\label{eqn:T2Pi}
\widetilde{\Pi}(\dd z^i)\,=\, \dd z^i + (-1)^i \, z^i\,\widetilde{\nu}\quad.
\end{flalign}
\end{propo}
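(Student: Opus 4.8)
The plan is to mirror the proof of Proposition \ref{prop:S3etaPi} step by step, now with the pair $(A,B)$ replaced by $(B,C)$ and with the Riemannian data of the embedding space replaced by the induced Riemannian structure $(g_{B},(\nabla_{B},\sigma_{B}))$ on $\bbS^{3}_{\theta}$. The only inputs needed are the explicit inverse metric $g_{B}^{-1}(\dd z^{i}\otimes_{B}\dd z^{j})=g^{ij}-z^{i}z^{j}$ from \eqref{eqn:S3invmetric}, the commutation relations \eqref{eqn:Rzdz}, the identity $R^{ij}R^{ji}=1$ from \eqref{eqn:RijRji}, and the matrices $g^{ij}$ and $h_{ij}$ from \eqref{eqn:R4invmetric} and \eqref{eqn:hmatrix}. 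By Example \ref{ex:levelset} it then suffices to show that $\widetilde{\nu}=\dd\widetilde{f}$ is central and normalized, and to compute $\widetilde{\Pi}$.

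First I would check that $\widetilde{\nu}=\sum_{i,j}h_{ij}z^{i}\dd z^{j}\in\Omega^{1}_{B}$ is central. This is a direct computation on the generators $z^{m}$ of $B$: using \eqref{eqn:Rzdz} to pass $z^{m}$ through $\dd z^{j}$, the equality $z^{m}\widetilde{\nu}=\widetilde{\nu}z^{m}$ reduces to a handful of relations among the entries of $R$ in \eqref{eqn:Rmatrix} that one verifies by inspection. Equivalently, one checks that $\widetilde{f}$ is central in $B$ and that it commutes with every $\dd z^{m}$, which via the Leibniz rule forces $\dd\widetilde{f}$ to be central. I want to flag that this verification has to be carried out explicitly: centrality of the function $\widetilde{f}$ does not by itself imply centrality of its differential. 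This is the one conceptual point to watch, although the computation is short.

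Next I would verify the normalization $\big[g_{B}^{-1}(\widetilde{\nu}\otimes_{B}\widetilde{\nu})\big]=1$ in $C$. Moving the inner coordinate to the left with \eqref{eqn:Rzdz} gives $\widetilde{\nu}\otimes_{B}\widetilde{\nu}=\sum_{i,j,k,l}h_{ij}h_{kl}R^{kj}\,z^{i}z^{k}\,\dd z^{j}\otimes_{B}\dd z^{l}$, and applying \eqref{eqn:S3invmetric} splits the outcome into two pieces. The $g^{jl}$-piece collapses, using the relation between the matrices $h$ and $g$ (the relevant $R$-factors being equal to $1$), to $z^{1}z^{3}+z^{2}z^{4}=2f+1$, which is $1$ in $C$ by the sphere relation $f=0$ (cf.\ \eqref{eqn:S3f}). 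The $-z^{j}z^{l}$-piece reassembles, after commuting the coordinates past one another with \eqref{eqn:Rzdz} and invoking \eqref{eqn:RijRji}, into $-\big(\sum_{i,j}h_{ij}z^{i}z^{j}\big)^{2}=-4\widetilde{f}^{\,2}$, which vanishes in $C=B/(\widetilde{f})$. Hence $\widetilde{\nu}$ is normalized and, by Example \ref{ex:levelset}, $C=C_{\bbT^{2}_{\theta}}$ is a noncommutative hypersurface of $B=B_{\bbS^{3}_{\theta}}$.

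Finally I would compute the projector. From \eqref{eqn:Pi} (applied to the present embedding) one has $\widetilde{\Pi}(\dd z^{i})=\dd z^{i}-g_{B}^{-1}(\dd z^{i}\otimes_{B}\widetilde{\nu})\,\widetilde{\nu}$, so the task is to evaluate $g_{B}^{-1}(\dd z^{i}\otimes_{B}\widetilde{\nu})$ in $C$. Performing the same manipulation as in the normalization step — pass the coordinate to the left via \eqref{eqn:Rzdz}, then apply \eqref{eqn:S3invmetric} — the $g^{il}$-contribution gives $(-1)^{i+1}z^{i}$, with the sign dictated by the pattern of $\pm1$ entries of $h$ in \eqref{eqn:hmatrix}, while the $-z^{i}z^{l}$-contribution reassembles to $-z^{i}\big(\sum_{k,l}h_{kl}z^{k}z^{l}\big)=-2z^{i}\widetilde{f}=0$ in $C$. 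Substituting back yields $\widetilde{\Pi}(\dd z^{i})=\dd z^{i}-(-1)^{i+1}z^{i}\widetilde{\nu}=\dd z^{i}+(-1)^{i}z^{i}\widetilde{\nu}$, the claimed formula. The proof is essentially bookkeeping; if anything is an obstacle it is keeping the $R$-twists straight when commuting coordinates and $1$-forms past one another, so that the reassembly of terms into $\widetilde{f}$ and $f$ is legitimate — which it is, thanks to $R^{ij}R^{ji}=1$.
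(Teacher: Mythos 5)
Your proposal is correct and follows essentially the same route as the paper: a direct generator check for centrality, the normalization computed from the induced inverse metric \eqref{eqn:S3invmetric} together with the relations $f=0$ in $B$ and $\widetilde{f}=0$ in $C$, and the projector evaluated from \eqref{eqn:Pi} with the $-z^iz^l$ term dying on $\widetilde{f}$. Your explicit $R$-factor bookkeeping (using $R^{ij}R^{ji}=1$ to reassemble terms into $f$ and $\widetilde{f}$) is just a spelled-out version of what the paper handles implicitly via the identity $\sum_{j,l}h_{ij}\,g^{jl}\,h_{kl}=g_{ik}$ and the fact that the relevant $R$-entries equal $1$, so the two arguments coincide in substance.
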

\begin{proof}
Centrality of $\widetilde{\nu}$ is a simple check using \eqref{eqn:Rzdz} and \eqref{eqn:Rmatrix}.
To prove the normalization condition, we use \eqref{eqn:S3invmetric} and compute
\begin{flalign}
g_B^{-1}\big(\widetilde{\nu}\otimes_B\widetilde{\nu}\big)= 
\sum_{i,j,k,l=1}^4 h_{ij}\, z^i\,\big(g^{jl} - z^j\, z^l\big)\, h_{kl}\,z^k
= \sum_{i,k=1}^4 g_{ik}\,z^i\,z^k = 1\quad,
\end{flalign}
where in the second step we used \eqref{eqn:T2f} and the identity
\begin{flalign}\label{eqn:ghcompatibility}
\sum_{j,l=1}^4 h_{ij}\,g^{jl}\, h_{kl}=g_{ik}\quad,
\end{flalign}
and in the last step we used \eqref{eqn:S3f}.
The explicit expression for the projector is obtained from a short calculation
\begin{flalign}
\nn \widetilde{\Pi} (\dd z^i ) &= \dd z^i - g_B^{-1}\big(\dd z^i \otimes_B\widetilde{\nu}\big)\,\widetilde{\nu}
=\dd z^i - \sum_{k,l=1}^4 \big(g^{il} - z^i\,z^l\big) \, h_{kl}\, z^k\,\widetilde{\nu}\\
&= \dd z^i - \sum_{k,l=1}^4 g^{il}\, h_{kl}\, z^k\,\widetilde{\nu} = \dd z^i + (-1)^i \, z^i\,\widetilde{\nu}\quad,
\end{flalign}
where in the second step we used \eqref{eqn:S3invmetric},
in the third step we used \eqref{eqn:T2f} and the last step follows from
$\sum_{l=1}^4 g^{il}\, h_{kl} = -(-1)^i\,\delta^i_k$.
\end{proof}

\begin{propo}
Assumptions \ref{assu:dftransparent} and \ref{assu:Pitransparent} hold true. 
The induced Riemannian structure from Proposition \ref{prop:RiemannianStructure} 
reads explicitly as
\begin{subequations}
\begin{flalign}
	 g_{C} \,=\, \sum_{i,j=1}^4 g_{ij}\, \dd z^{i} \otimes_{C} \dd z^{j} \,\in\,\Omega^1_C \otimes_C \Omega^1_C\quad,\label{eqn:T2metric}
\end{flalign}
\begin{flalign}
	g_{C}^{-1}\big(\dd z^{i} \otimes_{C} \dd z^{j}\big) \,=\, g^{ij} -\big(1+ (-1)^i\,(-1)^j\big)\,z^i\,z^j  \quad, \label{eqn:T2invmetric}
\end{flalign}
\begin{flalign}
	\nabla_{C} (\dd z^{i})  \,=\,  - z^i\, \sum_{k,l=1}^4 \big(g_{kl} - (-1)^i\, h_{kl}\big)\,\dd z^k\otimes_C \dd z^l  \quad,\label{eqn:T2connection}
\end{flalign}
\begin{flalign}
\sigma_C\big(\dd z^i\otimes_C\dd z^j\big)\,=\, R^{ji}\,\dd z^j\otimes_C \dd z^i\quad. \label{eqn:T2flip}
\end{flalign}
\end{subequations}
\end{propo}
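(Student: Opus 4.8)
The plan is to follow, essentially verbatim, the argument used for the analogous statement about $\bbS^3_\theta$ in Section \ref{subsec:sphere}, now applied to the hypersurface $C = B/(\widetilde f)$ of $B = B_{\bbS^3_\theta}$ with the central normalized $1$-form $\widetilde\nu$ and the projector $\widetilde\Pi(\dd z^i) = \dd z^i + (-1)^i z^i\,\widetilde\nu$ from Proposition \ref{prop:T2etaPi}. The structural fact that makes everything carry over is that the matrix $(h_{ij})$ in \eqref{eqn:hmatrix} has exactly the same support as $(g_{ij})$, namely the four entries $(1,3),(3,1),(2,4),(4,2)$, and that $R^{ij} = 1$ at each of those positions (cf.\ \eqref{eqn:Rmatrix}). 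Hence, just as for $\nu = \dd f$, the commutation relations \eqref{eqn:Rzdz} give $\widetilde\nu = \sum_{i,j} h_{ij}\, z^i\, \dd z^j = \sum_{i,j} \dd z^j\, h_{ij}\, z^i$ in $\Omega^1_B$, so every reordering of coordinate functions past differentials performed for $\bbS^3_\theta$ remains valid here.

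First I would verify the two assumptions. For Assumption \ref{assu:dftransparent} it suffices, by bilinearity, to check $\sigma_B(\dd z^k \otimes_B \widetilde\nu) = \widetilde\nu \otimes_B \dd z^k$ and $\sigma_B(\widetilde\nu \otimes_B \dd z^k) = \dd z^k \otimes_B \widetilde\nu$ on the generators; writing $\widetilde\nu = \sum_{i,j} \dd z^j\, h_{ij}\, z^i$, applying $\sigma_B$ from \eqref{eqn:S3flip} and reordering via \eqref{eqn:Rzdz} and $R^{ij}R^{ji} = 1$ (cf.\ \eqref{eqn:RijRji}), this reduces to the same finite identity on the entries of $R$ that already appears for $\bbS^3_\theta$. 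For Assumption \ref{assu:Pitransparent} I would insert $\widetilde\Pi(\dd z^i) = \dd z^i + (-1)^i z^i\,\widetilde\nu$ and compute $\sigma_B(\widetilde\Pi(\dd z^i) \otimes_B \dd z^j)$, treating the $\widetilde\nu$-term by left $B$-linearity of $\sigma_B$ together with Assumption \ref{assu:dftransparent}; the resulting expression equals $R^{ji}\, \dd z^j \otimes_B \widetilde\Pi(\dd z^i) = (\id \otimes_B \widetilde\Pi)\,\sigma_B(\dd z^i \otimes_B \dd z^j)$ once one uses $z^i\,\dd z^j = R^{ji}\, \dd z^j\, z^i$ (from \eqref{eqn:Rzdz} and \eqref{eqn:RijRji}). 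The second diagram of Assumption \ref{assu:Pitransparent} follows by the mirror computation. With the assumptions in place, Proposition \ref{prop:RiemannianStructure} (applied at the level $B \to C$) guarantees that $(g_C,(\nabla_C,\sigma_C))$ is a Riemannian structure on $(\Omega^1_C,\dd)$, and it remains only to unwind the general formulas.

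The identities \eqref{eqn:T2metric} and \eqref{eqn:T2flip} are immediate from \eqref{eqn:metricB} and \eqref{eqn:sigmaB} together with \eqref{eqn:S3metric} and \eqref{eqn:S3flip}. For the inverse metric \eqref{eqn:T2invmetric} I would first establish the auxiliary equality $g_B^{-1}(\dd z^i \otimes_B \widetilde\nu) = -(-1)^i z^i$ in $C$: expand $\widetilde\nu = \sum_{k,l} \dd z^l\, h_{kl}\, z^k$, use right $B$-linearity of $g_B^{-1}$ and the explicit form \eqref{eqn:S3invmetric}, and then apply $\sum_l g^{il} h_{kl} = -(-1)^i \delta^i_k$ (already used in the proof of Proposition \ref{prop:T2etaPi}) together with the torus relation $\sum_{k,l} h_{kl}\, z^k z^l = 2\widetilde f = 0$ in $C$ (cf.\ \eqref{eqn:T2f}). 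Then, using Lemma \ref{lem:dftransparent}(ii), $g_C^{-1}(\dd z^i \otimes_C \dd z^j) = g_B^{-1}(\dd z^i \otimes_B \widetilde\Pi(\dd z^j))$; expanding $\widetilde\Pi$ via \eqref{eqn:T2Pi}, using centrality of $\widetilde\nu$, and substituting the auxiliary equality yields $g^{ij} - z^i z^j - (-1)^{i+j} z^i z^j$, which is \eqref{eqn:T2invmetric}. For the connection \eqref{eqn:T2connection} I would substitute the explicit $\nabla_B$ of \eqref{eqn:S3connection} into the Gauss-type formula \eqref{eqn:nablaBexplicit} at the level $B \to C$; the term produced there by the left Leibniz rule and \eqref{eqn:S3connection} is proportional to $2\widetilde f = 0$ in $C$, so that $\nabla_B(\widetilde\nu) = \sum_{k,l} h_{kl}\, \dd z^k \otimes_C \dd z^l$, and combining this with the auxiliary value of $g_B^{-1}(\dd z^i \otimes_B \widetilde\nu)$ gives $-z^i \sum_{k,l}(g_{kl} - (-1)^i h_{kl})\, \dd z^k \otimes_C \dd z^l$, i.e.\ \eqref{eqn:T2connection}.

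I do not expect a genuine obstacle here: every step is a finite computation in the deformed polynomial algebra, and the only real inputs are the coincidence of the supports of $(h_{ij})$ and $(g_{ij})$, the identity $\sum_l g^{il} h_{kl} = -(-1)^i \delta^i_k$, and the torus relation $2\widetilde f = 0$ in $C$. The part that demands the most care is keeping track of the $R$-matrix phases when commuting coordinate functions $z^i$ past the differentials $\dd z^j$ and across the relative tensor product $\otimes_B$; this is where a phase or sign slip is most likely to creep in, and it is worth checking explicitly that all such phases collapse to $1$ precisely because $(h_{ij})$ is supported where $R^{ij} = 1$.
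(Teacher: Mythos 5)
Your proposal is correct and follows essentially the same route as the paper: verify Assumption \ref{assu:dftransparent} on generators using the support/phase structure of $R$ and $h$, verify Assumption \ref{assu:Pitransparent} by the same $\sigma_B(\widetilde{\Pi}(\dd z^i)\otimes_B \dd z^j)$ computation with $z^i\,\dd z^j = R^{ji}\,\dd z^j\,z^i$, and then unwind \eqref{eqn:inversemetricB} and the Gauss formula \eqref{eqn:nablaBexplicit} using $\widetilde{\Pi}$, the identity $\sum_l g^{il}h_{kl}=-(-1)^i\delta^i_k$, the relation $2\widetilde{f}=0$ in $C$, and $\nabla_B(\widetilde{\nu})=\sum_{k,l}h_{kl}\,\dd z^k\otimes\dd z^l$ (the paper's \eqref{eqn:T2nablaeta}). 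The only difference is that you spell out the ``straightforward calculations'' (e.g.\ the auxiliary value $g_B^{-1}(\dd z^i\otimes_B\widetilde{\nu})=-(-1)^i z^i$ in $C$) that the paper leaves implicit, and these details check out.
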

\begin{proof}
Verifying Assumption \ref{assu:dftransparent} is a simple check using 
\eqref{eqn:Rzdz}, \eqref{eqn:R4flip} and \eqref{eqn:Rmatrix}.
To prove commutativity of the top diagram in Assumption \ref{assu:Pitransparent}, we use 
\eqref{eqn:T2Pi} and compute
\begin{flalign}
\nn	\sigma_B\big(\widetilde{\Pi}(\dd z^{i}) \otimes_{B} \dd z^{j}\big) 
	&= R^{ji}\, \dd z^{j} \otimes_{B} \dd z^{i} + (-1)^i \, z^i\, \dd z^{j} \otimes_B \widetilde{\nu}\\
\nn	&= R^{ji}\, \dd z^{j} \otimes_{B} \dd z^{i} +  R^{ji} \, \dd z^{j} \otimes_B(-1)^i \,z^i\, \widetilde{\nu}\\
	&= (\id \otimes_{B} \widetilde{\Pi} ) \, \sigma_B\big(\dd z^{i} \otimes_{B} \dd z^{j}\big)\quad,
\end{flalign}
where in the second step we used \eqref{eqn:Rzdz}.
Commutativity of the bottom diagram in Assumption \ref{assu:Pitransparent} is proven by a similar calculation.
\sk

We observe that \eqref{eqn:T2metric} follows trivially from \eqref{eqn:metricB}
and \eqref{eqn:T2invmetric} follows from \eqref{eqn:inversemetricB},
\eqref{eqn:T2Pi} and a straightforward calculation. 
Equation \eqref{eqn:T2connection} follows from \eqref{eqn:nablaBexplicit},
\eqref{eqn:S3connection} and
\begin{flalign}\label{eqn:T2nablaeta}
	\nabla_B (\widetilde{\nu})= \sum_{k,l=1}^4 h_{kl}\, \dd z^k\otimes_B \dd z^l 
\end{flalign}
by a short calculation.  Finally, \eqref{eqn:T2flip} 
follows trivially from \eqref{eqn:sigmaB} and \eqref{eqn:S3flip}.
\end{proof}

\begin{propo}
Assumption \ref{assu:nabladdfcentral} holds true. The induced spinorial structure 
from Proposition \ref{prop:SpinStr} reads explicitly as
\begin{subequations}
\begin{flalign}\label{eqn:T2spinor}
	\EEE_{C} \,=\, \frac{\EEE_B}{\widetilde{f}\, \EEE_B} \,=\, \frac{\EEE}{f\EEE \cup \widetilde{f}\EEE}\quad,
\end{flalign}
\begin{flalign}\label{eqn:T2gamma}
	\gamma_{C}\big(\dd z^{i} \otimes_{C} e_{\alpha}\big) 
	\,&=\, \Big(z^i \!\!\!\sum_{k,l,m,n=1}^4  g_{mn}\,z^m\,h_{kl}\,z^k\,\gamma^l_\theta\,\gamma^n_\theta 
	- \sum_{k,l=1}^4 h_{kl}\,z^k\,\gamma^l_\theta \,\gamma^i_\theta  + (-1)^i\,z^i\Big)\,e_\alpha\quad,
\end{flalign}
\begin{flalign}\label{eqn:T2SpinConn}
	\nabla^{\sp}_{C} (e_{\alpha}) 
	\,=\,  \frac{1}{2}\sum_{i,j,k,l=1}^4\Big(g_{kl}\,z^k\,g_{ij}\,\dd z^i + h_{kl}\,z^k\,h_{ij}\,\dd z^i \Big)\otimes_C \gamma^j_\theta\,\gamma^l_\theta\,e_\alpha\quad.
\end{flalign}
\end{subequations}
\end{propo}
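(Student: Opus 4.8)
The plan is to replay, for the quotient map $\widetilde{q} : B \to C$ together with the central normalized $1$-form $\widetilde{\nu}$ and the projector $\widetilde{\Pi}$ of Proposition~\ref{prop:T2etaPi}, the same argument that handled the embedding $\bbS^3_\theta \hookrightarrow \bbR^4_\theta$ in Section~\ref{subsec:sphere}. The identification of $\EEE_C$ is immediate: $\EEE_C = \widetilde{q}_!(\EEE_B) = \EEE_B/\widetilde{f}\,\EEE_B$ is just the definition \eqref{eqn:EEEB} of the change of base functor, and substituting $\EEE_B = \EEE/f\EEE$ from \eqref{eqn:S3spinor} yields $\EEE_C = \EEE/(f\EEE \cup \widetilde{f}\EEE)$. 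Since Proposition~\ref{prop:SpinStr} already guarantees that $(\EEE_C,\nabla^{\sp}_C,\gamma_C)$ is a spinorial structure once its hypotheses are met, what remains is to verify Assumption~\ref{assu:nabladdfcentral} and to evaluate the defining formulas \eqref{eqn:gammaB} and \eqref{eqn:nablaspB} explicitly.

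For Assumption~\ref{assu:nabladdfcentral} I would start from the expression $\nabla_B(\widetilde{\nu}) = \sum_{k,l} h_{kl}\,\dd z^k \otimes_B \dd z^l$ of \eqref{eqn:T2nablaeta}. Because $\sigma_B$ is still the $R$-twisted flip of \eqref{eqn:T2flip} and the nonzero entries of $h$ lie on the same index set $\{(1,3),(3,1),(2,4),(4,2)\}$ as those of $g$ --- on which $R^{jk}R^{jl} = 1$ for every $j$ (the identity responsible for centrality of $g$ and of $\widetilde{\nu}$) and on which moreover $R^{kl} = 1$ --- the steps needed to simplify $\sigma_{23}\sigma_{12}(\widetilde{\Pi}(\omega) \otimes_B \nabla_B(\widetilde{\nu}))$ are the same as in the $\bbS^3_\theta$ verification with $h$ in place of $g$; one also invokes the already-established Assumption~\ref{assu:dftransparent} for this embedding together with centrality of $\widetilde{\nu}$ and of $\nabla_B(\widetilde{\nu})$, the latter being the general fact \eqref{eqn:nabladdfcentral}.

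The Clifford multiplication is obtained by unwinding \eqref{eqn:gammaB}, which here reads $\gamma_C(\dd z^i \otimes_C e_\alpha) = \gamma_{B[2]}\big(\widetilde{\Pi}(\dd z^i) \otimes_B \widetilde{\nu} \otimes_B e_\alpha\big)$. Substituting $\widetilde{\Pi}(\dd z^i) = \dd z^i + (-1)^i z^i \widetilde{\nu}$ from \eqref{eqn:T2Pi} and $\widetilde{\nu} = \sum_{k,l} h_{kl} z^k \dd z^l$, I would expand the iterated product $\gamma_B\big(\widetilde{\Pi}(\dd z^i) \otimes_B \gamma_B(\widetilde{\nu} \otimes_B e_\alpha)\big)$ using the explicit $\bbS^3_\theta$ Clifford multiplication \eqref{eqn:S3gamma}, commute the coordinate functions $z^k$ past the differentials $\dd z^i$ via the right $B$-module structure (generating powers of $R$), collapse the resulting products of $\gamma_\theta$-matrices with the $\theta$-Clifford relations of Lemma~\ref{lem:R4rcomm}, and drop the monomials annihilated by $\sum_{p,q} g_{pq} z^p z^q = 1$ (i.e.\ $f=0$, cf.\ \eqref{eqn:S3f}) and by $\sum_{p,q} h_{pq} z^p z^q = 0$ (i.e.\ $\widetilde{f}=0$, cf.\ \eqref{eqn:T2f}); the $\bbS^3_\theta$-analogues of the identities \eqref{eqn:cliffordetarels} organise this and reduce the a priori cubic-in-$\gamma_\theta$ expression to \eqref{eqn:T2gamma}. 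Likewise, writing out \eqref{eqn:nablaspB} as $\nabla^{\sp}_C(e_\alpha) = \nabla^{\sp}_B(e_\alpha) + \tfrac12(\id \otimes_B \gamma_{B[2]})\big(\nabla_B(\widetilde{\nu}) \otimes_B \widetilde{\nu} \otimes_B e_\alpha\big)$, the first summand is \eqref{eqn:S3SpinConn} read in $\Omega^1_C \otimes_C \EEE_C$ --- the $g$-summand of \eqref{eqn:T2SpinConn} --- while the correction term is treated exactly as above and, once $f = \widetilde{f} = 0$ and the $\theta$-Clifford relations have removed every term containing $g$, produces the $h$-summand of \eqref{eqn:T2SpinConn}. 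I expect the main obstacle to be entirely computational: carefully tracking the $R$-powers that appear when functions are moved past differentials and identifying exactly which monomials vanish modulo $f$ and $\widetilde{f}$ --- all structural steps being forced by the general construction of Section~\ref{sec:construction} and the explicit $\bbS^3_\theta$ data.
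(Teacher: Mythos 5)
Your proposal is correct and follows essentially the same route as the paper: Assumption \ref{assu:nabladdfcentral} is checked by the same $R$-matrix manipulations that give centrality of $\widetilde{f}$ (equivalently, of the $h$-bilinear element $\nabla_B(\widetilde{\nu})$), and the explicit formulas are obtained by unwinding \eqref{eqn:EEEB}, \eqref{eqn:gammaB} and \eqref{eqn:nablaspB} with the $\bbS^3_\theta$ data, dropping terms via $f=\widetilde{f}=0$ and $\widetilde{\nu}=0$ in $\Omega^1_C$. The only nitpick is bookkeeping: the unreduced expression for $\gamma_C$ is a priori quartic (not cubic) in the $\gamma_\theta$, but this does not affect the argument.
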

\begin{proof}
Recalling \eqref{eqn:T2nablaeta}, Assumption \ref{assu:nabladdfcentral} is verified 
by a similar calculation as the one that proves centrality of $\widetilde{f}$ given in \eqref{eqn:T2f}.
The explicit expressions in \eqref{eqn:T2spinor}, \eqref{eqn:T2gamma} 
and \eqref{eqn:T2SpinConn} follow easily from the definitions (cf.\ \eqref{eqn:EEEB},
\eqref{eqn:gammaB} and \eqref{eqn:nablaspB}) by straightforward calculations.
(To obtain \eqref{eqn:T2SpinConn}, one has to recall that $\widetilde{\nu} = \dd\widetilde{f}=0$ in $\Omega^1_C$.)
\end{proof}

\begin{propo}
The induced Dirac operator \eqref{eqn:induceddiracop} on $\bbT^{2}_{\theta}$ is given by
\begin{flalign}\label{eqn:DiracT2}
D_{C}(s) \,=\, - \frac{1}{2}\sum_{i,j=1}^4[\gamma^j_\theta,\gamma^i_\theta]_\theta^{}\,
\Big(\partial_i s\,\widetilde{z}_j  - \sum_{k=1}^4 \partial_k s\,z^k\,z_i\,\widetilde{z}_j  - s\, z_i\, \widetilde{z}_j\Big)\quad,
\end{flalign}
where $z_{i} := \sum_{k=1}^4g_{ik}\, z^{k}$, $\widetilde{z}_{i} := \sum_{k=1}^4h_{ik}\, z^{k}$,
$\partial_{i}s :=\sum_{\alpha=1}^4 \partial_{i}s^{\alpha}\, e_{\alpha}$ and $[\gamma_{\theta}^{j},\gamma_{\theta}^{i}]_\theta^{}$ 
is the $\theta$-commutator from Lemma \ref{lem:R4rcomm}.
\end{propo}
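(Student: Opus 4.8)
The plan is to derive \eqref{eqn:DiracT2} by a single application of Proposition \ref{prop:induceddiracop} to the noncommutative hypersurface $C = B/(\widetilde f)\hookrightarrow B = B_{\bbS^3_\theta}$, in complete analogy with the proof of Proposition \ref{prop:DiracS3}, only one level higher in the tower $\bbT^2_\theta\hookrightarrow\bbS^3_\theta\hookrightarrow\bbR^4_\theta$. Since Assumptions \ref{assu:dftransparent}, \ref{assu:Pitransparent} and \ref{assu:nabladdfcentral} for the embedding $C\hookrightarrow B$ have already been established in the preceding propositions, formula \eqref{eqn:induceddiracopexplicit} applies with $A$ replaced by $B$, $\nu$ by $\widetilde\nu$, $\Pi$ by $\widetilde\Pi$, $\nabla$ by $\nabla_B$, $\nabla^\sp$ by $\nabla^\sp_B$, $\gamma$ by $\gamma_B$ and $\sigma$ by $\sigma_B$, yielding (with brackets for equivalence classes suppressed as throughout this section)
\[
D_C(s) \,=\, -\tfrac12\big(\gamma_{B[2]} - \gamma_{B[2]}(\sigma_B\otimes_B\id)\big)\big(\widetilde\nu\otimes_B\nabla^\sp_B(s)\big) + \tfrac12\,\gamma_{B[2]}\big((\widetilde\Pi\otimes_B\id)\nabla_B(\widetilde\nu)\otimes_B s\big)\,.
\]
Into this I would feed the explicit $\bbS^3_\theta$-data obtained earlier: $\widetilde\nu$ from \eqref{eqn:T2eta}, $\nabla_B(\widetilde\nu)$ from \eqref{eqn:T2nablaeta}, $\widetilde\Pi$ from \eqref{eqn:T2Pi}, $\sigma_B$ from \eqref{eqn:S3flip}, $\gamma_B$ from \eqref{eqn:S3gamma} and $g_B^{-1}$ from \eqref{eqn:S3invmetric}; for a general spinor $s=\sum_\alpha s^\alpha e_\alpha$ one splits $\nabla^\sp_B(s)$, by the left Leibniz rule, into its differential part (the terms $\sum_{i,\alpha}(\partial_i s^\alpha)\,\dd z^i\otimes_B e_\alpha$) and its spin-connection part coming from \eqref{eqn:S3SpinConn}.

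The simplification then runs exactly along the lines of the proof of Proposition \ref{prop:DiracS3}. One first records the $\widetilde\nu$-analogues of the identities \eqref{eqn:cliffordetarels}, namely $\gamma_{B[2]}(\widetilde\Pi(\omega)\otimes_B\widetilde\nu\otimes_B t) = -\gamma_{B[2]}(\widetilde\nu\otimes_B\widetilde\Pi(\omega)\otimes_B t)$ and $\gamma_{B[2]}(\widetilde\nu\otimes_B\widetilde\nu\otimes_B t) = -t$, which follow from the Clifford relations \eqref{eqn:cliffordrels} for $\gamma_B$ (valid by Proposition \ref{prop:SpinStr}), Assumption \ref{assu:dftransparent} at the level of $B$, and the normalization of $\widetilde\nu$. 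Inserting the differential part of $\nabla^\sp_B(s)$ into the first term of $D_C$, substituting $\gamma_B$ from \eqref{eqn:S3gamma}, and reorganizing the emerging products $\gamma_\theta^j\gamma_\theta^i$ into the $\theta$-commutators of Lemma \ref{lem:R4rcomm}, should produce the $\partial$-dependent summands $-\tfrac12\sum_{i,j}[\gamma_\theta^j,\gamma_\theta^i]_\theta^{}\big(\partial_i s\,\widetilde z_j - \sum_k\partial_k s\,z^k z_i\widetilde z_j\big)$ of \eqref{eqn:DiracT2}, the $z^k z_i\widetilde z_j$-correction coming from the $z^i$-term carried by $\gamma_B$ in \eqref{eqn:S3gamma} together with the sphere relation $\sum_{i,k}g_{ik}z^iz^k = 1$; the remaining contributions — the spin-connection part of $\nabla^\sp_B(s)$ in the first term of $D_C$, and the whole second term, which by \eqref{eqn:T2nablaeta} equals $\tfrac12\sum_{k,l}h_{kl}\,\gamma_{B[2]}\big(\widetilde\Pi(\dd z^k)\otimes_B\dd z^l\otimes_B s\big)$ — should recombine, using the Clifford relations once more and the compatibility identity \eqref{eqn:ghcompatibility} between $g$ and $h$, into the last summand $+\tfrac12\sum_{i,j}[\gamma_\theta^j,\gamma_\theta^i]_\theta^{}\,s\,z_i\widetilde z_j$.

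The main obstacle is purely computational bookkeeping rather than anything conceptual: one has to keep track of products of up to four deformed gamma matrices $\gamma_\theta^i$, repeatedly commute the coordinates $z^k$ past the differentials $\dd z^l$ by means of the $R$-matrix relations \eqref{eqn:Rzdz}, and carefully collapse the many scalar pairings using $g_B^{-1}$ from \eqref{eqn:S3invmetric}, the identity \eqref{eqn:ghcompatibility} and the two defining torus relations \eqref{eqn:lincombinationrelations}. As an independent check of the final formula one may note, via Proposition \ref{prop:Diracproperty} and the explicit Clifford map $\gamma_C$ of \eqref{eqn:T2gamma}, that both sides of \eqref{eqn:DiracT2} obey the same twisted Leibniz rule, so that it would suffice to compare them on the basis spinors $e_\alpha$. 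Since every individual step is of precisely the type already carried out in the proofs of Propositions \ref{prop:SpinStr}, \ref{prop:induceddiracop} and \ref{prop:DiracS3}, I would, exactly as the authors do for $\bbS^3_\theta$, present only the organizing steps above and omit the lengthy term-by-term reduction.
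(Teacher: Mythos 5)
Your proposal follows exactly the route the paper takes: specialize the general induced Dirac operator formula of Proposition \ref{prop:induceddiracop} to the hypersurface $C=B/(\widetilde f)$ with the already-computed data $\widetilde\nu$, $\widetilde\Pi$, $\nabla_B(\widetilde\nu)$, $\gamma_B$, $\sigma_B$, and then reduce via the Clifford identities and the relations \eqref{eqn:ghcompatibility}, \eqref{eqn:lincombinationrelations} — the paper itself records only that this is ``a straightforward but slightly lengthy calculation'' and omits the details. Your organizational sketch (including the consistency check on basis spinors via Proposition \ref{prop:Diracproperty}) is correct and, if anything, more explicit than the paper's own proof.
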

\begin{proof}
The proof is a straightforward but slightly lengthy calculation and hence will not 
be written out in detail.
\end{proof}

From our presentation given in \eqref{eqn:DiracT2}, it is not easy to
interpret and understand $D_C$ as a Dirac operator on the flat
noncommutative torus $\bbT^2_\theta$. We will now simplify \eqref{eqn:DiracT2} 
to a form that admits an obvious interpretation. For this it will be useful to introduce
the standard generators 
\begin{subequations}
\begin{flalign}
u\,:=\,\sqrt{2}\, z^1\quad ,\quad v\,:=\, \sqrt{2}\,z^2\quad,\quad 
\overline{u}\,:=\,\sqrt{2}\, \overline{z^1}\quad,\quad \overline{v}\,:=\, \sqrt{2}\,\overline{z^2}
\end{flalign}
of the algebra $C$ of $\bbT^2_\theta$, which satisfy the relations
\begin{flalign}
\overline{u}\,u \,=\, 1\quad,\quad \overline{v}\,v \,=\, 1\quad,\quad u\,v = e^{i\theta}\, v\, u\quad.
\end{flalign}
\end{subequations}
The module $\Omega^1_C$ of $1$-forms on $C$ is a $2$-dimensional free module
with central basis
\begin{flalign}\label{eqn:ddphibasis}
\dd\phi^1 \,:=\, \frac{1}{i}\,\overline{u}\, \dd u \quad,\quad \dd\phi^2 \,:=\,\frac{1}{i}\,\overline{v}\,\dd v\quad,
\end{flalign}
where $i\in\bbC$ denotes the imaginary unit. (Our notation is inspired
by thinking of $u = e^{i\,\phi^1}$ and $v= e^{i\,\phi^2}$ as exponential functions.)
The inverse metric \eqref{eqn:T2invmetric} in this basis reads as
\begin{flalign}\label{eqn:ginverseT2explicit}
g_C^{-1}\big(\dd \phi^i\otimes \dd \phi^j \big) \,=\, 2\,\delta^{ij}\quad,
\end{flalign}
where the factor $2$ is due to the fact that our embedded noncommutative torus
$\bbT^2_\theta \hookrightarrow \bbS^3_\theta$ has radius $\frac{1}{\sqrt{2}}$, see \eqref{eqn:lincombinationrelations}.
The differential $\dd a = \partial_{\phi^1} a \,\dd\phi^1 + \partial_{\phi^2}a\,\dd\phi^2$
of any $a\in C$ can be expressed in the basis \eqref{eqn:ddphibasis}. Comparing this
to $\dd a = \sum_{i=1}^4\partial_i a\,\dd z^i \in\Omega^1_C$, we find
\begin{flalign}\label{eqn:partialsonT2}
\partial_1 a\,=\,\frac{2}{i}\,\partial_{\phi^1}a\,\overline{z^1}\quad,\quad
\partial_2 a\,=\, \frac{2}{i}\,\partial_{\phi^2}a\,\overline{z^2}\quad,\quad
\partial_3 a\,=\,0\quad,\quad \partial_4 a\,=\, 0
\end{flalign}
for the noncommutative partial derivatives along $z^i$.
\sk

To simplify the induced Dirac operator \eqref{eqn:DiracT2} on $\bbT^2_\theta$,
we use the Clifford relations in the form of Lemma \ref{lem:R4rcomm} (iii)
and obtain after a short calculation
\begin{flalign}
D_C(s) \,=\, -\gamma\Big(\widetilde{\nu}\otimes_C \sum_{i=1}^4
\Big(\gamma^i_\theta \,\partial_i s - \gamma^i_\theta\, s\,z_i\Big) \Big)
+ \gamma_{[2]}\Big(\widetilde{\nu}\otimes_C\nu \otimes_C \sum_{i=1}^4 \partial_i s\,z^i \Big)
+ \sum_{i=1}^4 (-1)^i\, \partial_i s\,z^i\quad.
\end{flalign}
Applying the map $\gamma(\widetilde{\nu} \otimes_C (-)) : \EEE_C\to\EEE_C$ to this expression, 
which squares to $-\id$ because $\widetilde{\nu}$ is normalized, we define 
\begin{flalign}
\nn \widetilde{D}_C(s) \,:=&\, \gamma\big(\widetilde{\nu}\otimes_C D_C(s)\big)\\
\,=&\, 
 \sum_{i=1}^4 \Big(\gamma^i_\theta \,\partial_i s - \gamma^i_\theta\, s\,z_i\Big)
 - \gamma\Big(\nu \otimes_C \sum_{i=1}^4 \partial_i s\,z^i \Big)
 + \gamma\Big(\widetilde{\nu}\otimes_C \sum_{i=1}^4 (-1)^i\, \partial_i s\,z^i\Big)\quad.
\end{flalign}
Inserting \eqref{eqn:partialsonT2}, \eqref{eqn:S3eta} and \eqref{eqn:T2eta} into this
expression and carrying out all summations, one obtains
\begin{flalign}
\nn \widetilde{D}_C(s)\,&=\,\frac{1}{i}\Big( \gamma^1_\theta\, \partial_{\phi^1}s\,\overline{z^1} -
\gamma^3_\theta\, \partial_{\phi^1}s\,z^1\Big)  - \frac{1}{2}\,\Big(\gamma^1_\theta \,s\,  \overline{z^1}+\gamma^3_\theta \,s\,  z^1\Big)\\
&\qquad~\qquad +  \frac{1}{i}\Big(\gamma^2_\theta\, \partial_{\phi^2}s\,\overline{z^2} - \gamma^4_\theta\, \partial_{\phi^2}s\,z^2 \Big)
- \frac{1}{2}\,\Big(\gamma^2_\theta \,s\,  \overline{z^2} + \gamma^4_\theta \,s\,  z^2  \Big)\quad.\label{eqn:Diractilde2}
\end{flalign}
Let us introduce the $C$-module map $\widetilde{\gamma} : \Omega^1_C\otimes_C\EEE_C\to\EEE_C$ by defining
\begin{flalign}
\widetilde{\gamma}\big(\dd\phi^1\otimes_C s\big)\,:=\,\frac{1}{i} \Big( \gamma^1_\theta \,s\,\overline{z^1} - \gamma^3_\theta\,s\,z^1\Big)
\quad,\quad 
\widetilde{\gamma}\big(\dd\phi^2\otimes_C s\big)\,:=\, \frac{1}{i}\Big(\gamma^2_\theta \,s\,\overline{z^2} - \gamma^4_\theta\,s\,z^2\Big)\quad,
\end{flalign}
for all $s\in\EEE_C$. One easily shows that $\widetilde{\gamma}$ satisfies the Clifford relations
\begin{flalign}
\widetilde{\gamma}_{[2]}\big(\dd\phi^i \otimes_C\dd\phi^j \otimes_C s\big) +
\widetilde{\gamma}_{[2]}\big(\dd\phi^j \otimes_C\dd\phi^i \otimes_C s\big) = -2\,g_C^{-1}\big(\dd \phi^i\otimes \dd \phi^j \big)\,s
=-4\,\delta^{ij}\,s
\end{flalign}
for the inverse metric \eqref{eqn:ginverseT2explicit}. (Note that there is no $\sigma$
in this expression because $\sigma(\dd\phi^i\otimes_C\dd\phi^j) = \dd\phi^j\otimes_C\dd\phi^i$.)
This allows us to write \eqref{eqn:Diractilde2} as
\begin{flalign}
\nn \widetilde{D}_C(s) \,&=\, \widetilde{\gamma}\Big(\dd \phi^1 \otimes_C \Big(\partial_{\phi_1} s 
+\frac{1}{4} \widetilde{\gamma}\big(\dd\phi^1 \otimes_C \big(\gamma^1_\theta \,s\,  \overline{z^1}+\gamma^3_\theta \,s\,  z^1\big) \big)\Big)\Big)\\
\nn &\qquad~\qquad + \widetilde{\gamma}\Big(\dd \phi^2 \otimes_C \Big(\partial_{\phi_2} s 
+\frac{1}{4} \widetilde{\gamma}\big(\dd\phi^2 \otimes_C\big(\gamma^2_\theta \,s\,  \overline{z^2} + \gamma^4_\theta \,s\,  z^2 \big) \big)\Big)\Big)\\
&=\widetilde{\gamma}\Big(\dd \phi^1 \otimes_C \Big(\partial_{\phi_1} s 
+\frac{1}{8i}  [\gamma^1_\theta,\gamma^3_\theta]_\theta^{}\,s\Big)\Big)
+ \widetilde{\gamma}\Big(\dd \phi^2 \otimes_C \Big(\partial_{\phi_2} s 
+\frac{1}{8i} [\gamma^2_\theta,\gamma^4_\theta]_{\theta} \,s \Big)\Big)
\quad,\label{eqn:Diractilde3}
\end{flalign}
which we recognize as the Dirac operator on $\bbT^2_\theta$ 
corresponding to a rotating frame spin structure, see \cite{BarrettGaunt}.
By a direct calculation, one shows that the spectrum of this operator,
and hence the spectrum of the Dirac operator $D_C$ in \eqref{eqn:DiracT2} 
on the noncommutative torus $\bbT^2_\theta$, is given by
\begin{flalign}\label{eqn:T2spectrum}
\Big\{ \pm \sqrt{2}~\sqrt{\left(m+\tfrac{1}{2}\right)^2 + \left(n+\tfrac{1}{2}\right)^2}\,:\, m,n\in \bbZ  \Big\}\quad.
\end{flalign}
We note that this coincides with the spectrum of the Dirac operator corresponding to
the $(1,1)$ spin structure on the commutative $2$-torus $\bbT^2$, see e.g.\ \cite{Friedrich}.
(The factor $\sqrt{2}$ in \eqref{eqn:T2spectrum} is because our noncommutative torus
$\bbT^2_\theta \hookrightarrow \bbS^3_\theta$ has radius $\frac{1}{\sqrt{2}}$.)
\sk

By the same argument as in Proposition \ref{propo:DiraccomparisonS3},
which however involves now a considerably lengthier calculation to compute
$D_{C}(e_\alpha)$ on the basis spinors, one can show that, when expressed in terms of star-products,
our noncommutative hypersurface Dirac operator \eqref{eqn:Diractilde3}
on $\bbT^2_\theta$ coincides with the isospectral deformation \cite{Brain} 
of the classical Dirac operator of type $(1,1)$ on the commutative $2$-torus,
acting as in \cite{BarrettGaunt} on doubled, i.e.\  $4$-dimensional, spinors.


\section*{Acknowledgments}
We would like to thank Branimir {\'C}a{\'c}i{\'c} for very useful comments
on the manuscript and in particular for suggesting Definition \ref{def:hypersurface} to us.
We would also like to thank Joakim Arnlind, John Barrett, James Gaunt, 
Shahn Majid and Axel Tiger Norkvist for useful comments related to this work. 
H.N.\ is supported by a PhD Scholarship from the School of Mathematical
Sciences of the University of Nottingham. 
A.S.\ gratefully acknowledges the financial support of 
the Royal Society (UK) through a Royal Society University 
Research Fellowship (UF150099), a Research Grant (RG160517) 
and two Enhancement Awards (RGF\textbackslash EA\textbackslash 180270 and RGF\textbackslash EA\textbackslash 201051). 


\end{document}